\newtheorem{thm}{Theorem}[section]
\newtheorem{cor}[thm]{Corollary}
\newtheorem{claim}[thm]{Claim}
\newtheorem{fact}[thm]{Fact}
\newtheorem{lemma}[thm]{Lemma}
\newtheorem{prop}[thm]{Proposition}
\theoremstyle{definition}
\newtheorem{definition}[thm]{Definition}
\newtheorem{remark}[thm]{Remark}
\def\rquotient#1#2{%
	\makeatletter
	\raise.3ex\hbox{$#1$}/\lower.3ex\hbox{$#2$}%
	\makeatother
}	
\newcommand{\subjclass}[2][2010]{%
	\let\@oldtitle\@title%
	\gdef\@title{\@oldtitle\footnotetext{#1 \emph{Mathematics subject classification.} #2}}%
}
\newcommand{\keywords}[1]{%
	\let\@@oldtitle\@title%
	\gdef\@title{\@@oldtitle\footnotetext{\emph{Key words and phrases.} #1.}}%
}
\newcommand{\Address}{{% additional braces for segregating \footnotesize
		\bigskip
		\small
		
\noindent \textsc{Universit\'e de Montpellier\\ 
Institut Math\'ematiques Alexander Grothendieck\\
Place Eug\`ene Bataillon\\
34090 Montpellier (France)}\par\nopagebreak
\noindent \textit{E-mail address}: \texttt{anthony.genevois@umontpellier.fr}

\medskip \noindent \textsc{\'Ecole Normale Sup\'erieure de Lyon\\
15 parvis Ren\'e Descartes\\
BP 7000\\
69342 Lyon Cedex 07 (France)}\par\nopagebreak
\noindent \textit{E-mail address}: \texttt{geoffrey.tournier@ens-lyon.fr}
		
}}
\title{Hyperbolic structures on Houghton groups}
\date{\today}
\author{Anthony Genevois and Geoffrey Tournier\footnote{Most of this work is part of the second author's master thesis, realised between April and June of 2024 at the University of Montpellier, under the supervision of the first author.}}
\subjclass{Primary 20F65. Secondary 20F67.}
\keywords{Hyperbolic spaces, Houghton groups}
\begin{document}

\maketitle

\begin{abstract}
Given a group $G$, its poset of hyperbolic structures $\mathcal{H}(G)$ encodes all the possible cobounded actions of $G$ on hyperbolic spaces. In this article, we describe the poset $\mathcal{H}(H_n)$ for every Houghton group $H_n$, $n \geq 2$. In particular, we show that $H_n$ admits exactly $n$ focal hyperbolic structures. As an application, we construct the first example of a group admitting exactly one focal hyperbolic structure, answering a question of Abbott, Balasubramanya, and Osin.
\end{abstract}

\tableofcontents

\section{Introduction}

\noindent
Introduced in \cite{MR3995018}, the \emph{poset of hyperbolic structures} $\mathcal{H}(G)$ of a group $G$ encodes all the possible cobounded actions of $G$ on hyperbolic spaces. More formally, $\mathcal{H}(G)$ is the set of all generating sets $X$, not necessarily finite, for which $\mathrm{Cayl}(G,X)$ is hyperbolic modulo the equivalence relation: $X \sim Y$ whenever $X \preceq Y$ and $Y \preceq X$. Here, $X \preceq Y$ means that 
$$\sup\limits_{y \in Y} \|y\|_X < \infty$$
where $\|\cdot\|_X$ denotes the word-length relative to $X$.  

\medskip \noindent
Except when it is trivial (see \cite{NL} and references therein), the poset of hyperbolic structures is precisely known for only a few groups. Examples include some wreath products \cite{MR4069979}, some solvable groups such as (generalised) Baumslag-Solitar \cite{MR4602410, MR4585997, MR4751864}, and some higher rank lattices \cite{BCFS}. The recent work \cite{BFFZ} is also worth mentioning, investigating the hyperbolic structures of Thompson's groups $F$. For more details, we refer the reader to the survey \cite{SurveyS}.

\medskip \noindent
In this article, we focus our attention on the so-called \emph{Houghton groups}. Introduced in \cite{MR521478}, Houghton groups provide a family of examples that is often used in order to illustrate interesting behaviours. Similarly to lamplighter groups, their definition is simple, making them tractable for direct investigation; but they are also quite different from the large families of groups usually considered in (geometric) group theory. Most notably, the $n$th Houghton group $H_n$ is an example of a group of type $F_{n-1}$ but not of type $FP_n$ \cite{MR885095}.

\medskip \noindent
Given an integer $n \geq 1$, let $\mathcal{R}_n$ denote the disjoint union of $n$ rays $R_i:= \{i\} \times \mathbb{N}$, $1 \leq i \leq n$. The \emph{$n$th Houghton group} $H_n$ is the group of the quasi-automorphisms $\mathcal{R}_n \to \mathcal{R}_n$ that fix each end of $\mathcal{R}_n$. Recall that a quasi-automorphism of a graph is a bijection of its vertices that preserves adjacency and non-adjacency for all but finitely many pairs of vertices. In other words, $H_n$ is the group of the bijections $\mathcal{R}_n \to \mathcal{R}_n$ that eventually acts on each ray $R_1, \ldots, R_n$ like a translation. 

\medskip \noindent
\begin{minipage}{0.45\linewidth}
\includegraphics[trim=0 0 28cm 0, clip, width=0.95\linewidth]{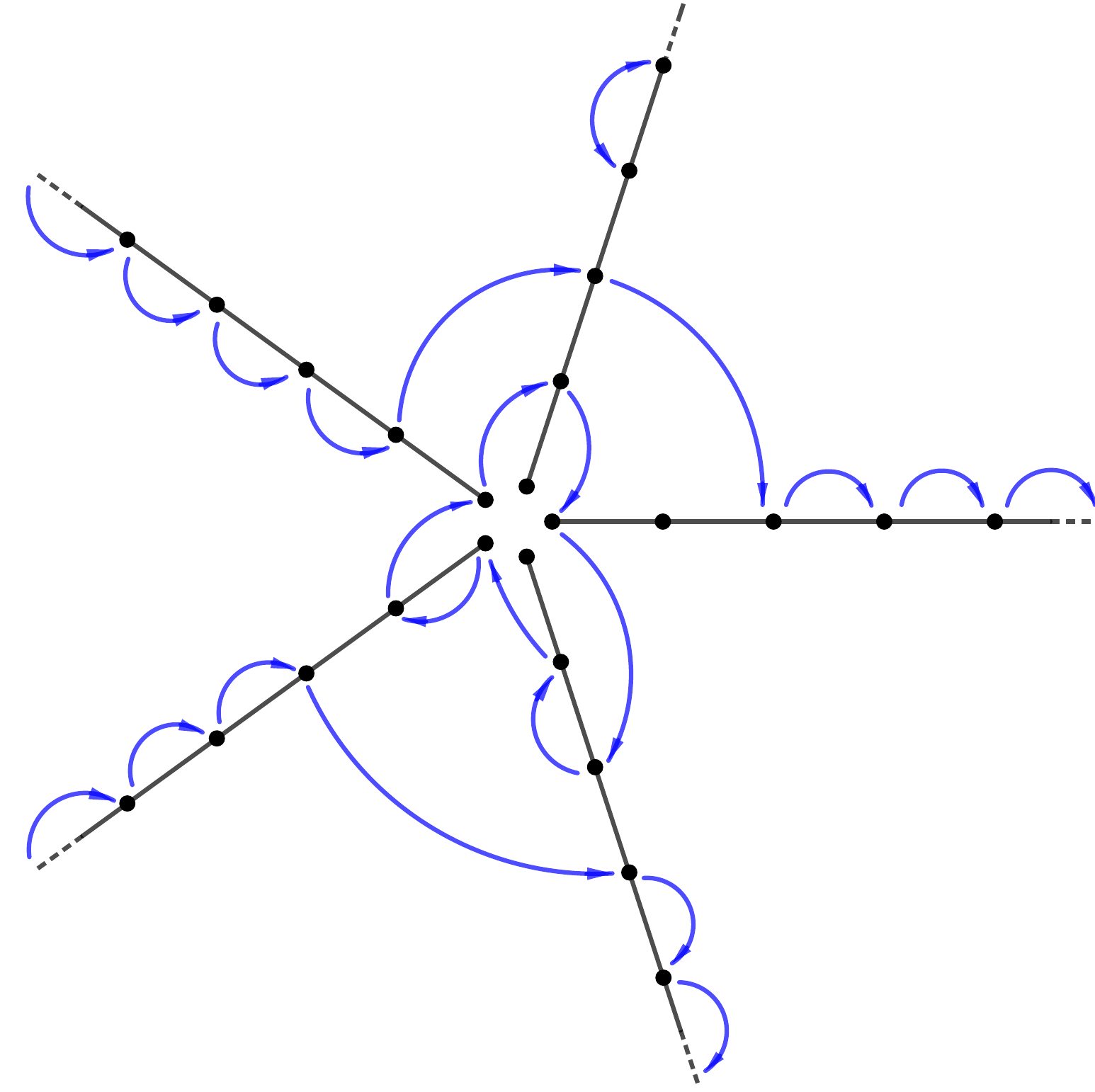}
\end{minipage}
\begin{minipage}{0.54\linewidth}
It is worth noticing that $H_n$ is (locally finite)-by-$\mathbb{Z}^{n-1}$. More precisely, for every $1 \leq i \leq n$, there is a natural morphism $\lambda_i : H_n \twoheadrightarrow \mathbb{Z}$ that records the eventual algebraic translation length of an element along the ray $R_i$. Then $\lambda := \lambda_1 \oplus \cdots \oplus \lambda_n$ maps $H_n$ to $\mathbb{Z}^n$ with kernel the group $\mathfrak{S}_\infty$ of finitely supported permutations of $\mathcal{R}_n$. This morphism $\lambda$, however, is not surjective. Due to the fact that the elements of $H_n$ define bijections $\mathcal{R}_n \to \mathcal{R}_n$, we must have $\lambda_1 + \cdots + \lambda_n=0$. Thus, the image of $\lambda$ has rank $\leq n-1$. In order to justify that the rank is exactly $n-1$, it suffices to consider the images of the elements $t_{i,j}$ inducing a translation on $R_i \cup R_j$ and fixing pointwise the other rays.
\end{minipage}

\medskip \noindent
The main result of our article describes precisely the posets of hyperbolic structures of Houghton groups.

\begin{thm}\label{thm:Intro}
Let $n\geq 2$ be an integer. The Houghton group $H_n$ has exactly $n$ focal hyperbolic structures, namely
$$\mathcal{F}_i:= [\mathrm{Fix}(R_i) \cup T] \text{ for } 1 \leq i \leq n, \text{ where } T:= \{ t_{r,s}, 1 \leq r,s \leq n\};$$
which are pairwise non-comparable. Each $\mathcal{F}_i$ dominates a single lineal hyperbolic structure, namely $[ \mathrm{ker}(\lambda_i) \cup T].$ If $n=2$, $H_n$ has a single lineal hyperbolic structure; if $n \geq 3$, $H_n$ has uncountably many lineal hyperbolic structure, which are pairwise non-comparable.
\end{thm}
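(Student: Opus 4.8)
The plan rests on three facts. First, $H_n$ is (locally finite)-by-$\mathbb{Z}^{n-1}$, hence amenable, so by Caprace--Cornulier--Monod--Tessera (see also Abbott--Balasubramanya--Osin) every cobounded action of $H_n$ on a hyperbolic space is elliptic, lineal, or focal: thus $\mathcal{H}(H_n)$ is the disjoint union of the unique elliptic structure $[H_n]$ (its minimum), a family of lineal structures, and a family of focal ones. Second, $H_n$ and all of its relevant subgroups are amenable, so every homogeneous quasimorphism is a homomorphism; a direct computation gives $\mathrm{Hom}(H_n,\mathbb{R})=\mathbb{R}\lambda_1\oplus\cdots\oplus\mathbb{R}\lambda_n$ modulo $\lambda_1+\cdots+\lambda_n=0$, an $(n-1)$-dimensional space, and moreover every homomorphism from a finite-index subgroup of $H_n$ to $\mathbb{R}$ is conjugation-invariant (it factors through $\lambda$, the relevant alternating groups being perfect), which will exclude non-orientable lineal structures. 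Third, the focal actions of amenable groups are governed, by the same reference, by confining subsets of the level sets of the Busemann homomorphism.

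\emph{Lineal structures.} For a nonzero $\phi=\sum a_i\lambda_i$, put $X_\phi:=\{g\in H_n:|\phi(g)|\le 1\}$. It contains $\mathfrak{S}_\infty=\ker(\lambda_1\oplus\cdots\oplus\lambda_n)$ together with enough products of the $t_{r,s}$ to generate $H_n$, and $\|g\|_{X_\phi}$ is comparable to $|\phi(g)|$, so $\mathrm{Cayl}(H_n,X_\phi)$ is quasi-isometric to $\mathbb{R}$, giving a lineal structure $[X_\phi]$ with Busemann homomorphism $\phi$. Standard arguments show these are all the lineal structures, that $[X_\phi]=[X_\psi]$ exactly when $\phi,\psi$ are proportional, and that distinct lineal structures are pairwise incomparable; and $[X_{\lambda_i}]=[\ker\lambda_i\cup T]$, since any $g$ with $|\lambda_i(g)|\le 1$ differs from an element of $\ker\lambda_i$ by a bounded power of $t_{i,j}$. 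Hence the lineal structures form $\mathbb{P}(\mathrm{Hom}(H_n,\mathbb{R}))$: one point if $n=2$ (since $\lambda_1=-\lambda_2$), an uncountable antichain if $n\ge 3$.

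\emph{Focal structures.} A focal structure has a Busemann homomorphism $\beta\ne 0$ and is encoded by a confining subset of $\ker\beta$ under conjugation by some $g$ with $\beta(g)\ne 0$. When $\beta=\lambda_i$, the subgroup $\mathrm{Fix}(R_i)$ is confining under conjugation by $t_{i,j}$: being a subgroup, the only substantive axioms are $t_{i,j}\,\mathrm{Fix}(R_i)\,t_{i,j}^{-1}\subseteq\mathrm{Fix}(R_i)$, true because $t_{i,j}(R_i)\supseteq R_i$, and $\bigcup_{m\ge 0}t_{i,j}^{-m}\,\mathrm{Fix}(R_i)\,t_{i,j}^{m}=\ker\lambda_i$, true because the sets $t_{i,j}^{-m}(R_i)$ exhaust $R_i$ from infinity while every element of $\ker\lambda_i$ acts as the identity on $R_i$ outside a finite set. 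Consequently $\mathrm{Cayl}(H_n,\mathrm{Fix}(R_i)\cup T)$ is hyperbolic; the action is focal and not lineal because the horosphere $\ker\lambda_i$ is connected but unbounded in it (sliding the support of a finitely supported permutation past $R_i$ has linear cost). This yields the focal structure $\mathcal{F}_i=[\mathrm{Fix}(R_i)\cup T]$, with Busemann homomorphism $\lambda_i$. Conversely, when $\beta$ is proportional to no $\lambda_i$, no confining subset of $\ker\beta$ gives a focal structure: existence for $\beta=\lambda_i$ works because $\lambda_i$ singles out the ray $R_i$, along which conjugation by $t_{i,j}$ sweeps all of $\ker\lambda_i$ into $\mathrm{Fix}(R_i)$, whereas a mixed $\beta$ singles out no ray --- any $g$ with $\beta(g)\ne 0$ translates at least two rays at unequal speeds --- and one checks that then no proper subset of $\ker\beta$ is absorbed under conjugation by $g$ (the finitely supported permutations and the infinite translations lying in $\ker\beta$ obstruct it), so the only confining subset up to equivalence is $\ker\beta$, which produces the lineal structure $[X_\beta]$, not a focal one. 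A parallel analysis shows no confining subset of $\ker\lambda_i$ lies strictly between $\mathrm{Fix}(R_i)$ and $\ker\lambda_i$, so $\mathcal{F}_i$ is the only focal structure with Busemann homomorphism $\lambda_i$. The focal structures are therefore exactly $\mathcal{F}_1,\dots,\mathcal{F}_n$.

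\emph{The poset, and the main obstacle.} Distinct $\mathcal{F}_i$ are incomparable because $\mathrm{Fix}(R_j)$ contains finitely supported permutations with arbitrarily large support on $R_i$, hence of unbounded $\mathrm{Fix}(R_i)\cup T$-length. A lineal structure $[X_\phi]$ satisfies $[X_\phi]\preceq\mathcal{F}_i$ if and only if $\phi$ vanishes on $\mathrm{Fix}(R_i)$; since $\mathrm{Fix}(R_i)\cong H_{n-1}$ carries the relation $\sum_{k\ne i}\lambda_k|_{\mathrm{Fix}(R_i)}=0$, this forces $\phi$ proportional to $\lambda_i$, so $\mathcal{F}_i$ dominates exactly the lineal structure $[X_{\lambda_i}]=[\ker\lambda_i\cup T]$, and strictly (the inclusion $\mathrm{Fix}(R_i)\cup T\subseteq\ker\lambda_i\cup T$ gives one comparison, the unbounded horosphere gives strictness). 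Hence for $n\ge 3$ the uncountably many other lineal structures $[X_\phi]$, with $\phi$ proportional to no $\lambda_i$, are dominated by no focal structure; together with the previous steps this determines $\mathcal{H}(H_n)$. The main obstacle is the converse part of the focal-structure step: the precise classification of the confining subsets of the level sets $\ker\beta$ --- ruling out focal structures with ``mixed'' Busemann homomorphism, and ruling out intermediate confining subsets for $\lambda_i$. This is a concrete analysis of how conjugation by an eventual translation moves the supports of finitely supported permutations, deciding exactly when it can collapse a whole level set into a proper subgroup; it is where the combinatorics of $\mathcal{R}_n$ genuinely enters. By comparison, the hyperbolicity of $\mathrm{Cayl}(H_n,\mathrm{Fix}(R_i)\cup T)$, while substantial, is routine once the confining-subgroup picture is available.
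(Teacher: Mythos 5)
Your framework --- amenability excluding horocyclic and general-type actions, lineal structures parametrised by projectivised characters, focal structures via confining subsets --- is the same one the paper uses, and the parts you actually carry out (the lineal classification, including the exclusion of non-oriented structures, and the verification that $t_{i,j}$ strictly confines into $\mathrm{Fix}(R_i)$, so that the $\mathcal{F}_i$ are pairwise non-comparable focal structures) are correct and correspond to Lemma~\ref{lem:LinealActions}, Fact~\ref{fact:NotOntoDinfty}, and Lemma~\ref{lem:TheyAreHypS}. But the converse --- that \emph{every} focal structure is some $\mathcal{F}_i$ --- is exactly what you defer with ``one checks'' and ``a parallel analysis shows'', and you yourself name it as the main obstacle. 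That converse is not a routine check; it is the bulk of the paper, and your sketch of it is not in a form that would go through. Confining subsets are not subgroups and matter only up to equivalence of generating sets, so the statement to prove is not that no confining subset sits ``strictly between $\mathrm{Fix}(R_i)$ and $\ker\lambda_i$'', but that any strictly confining subset $Q$ defines the \emph{same} structure as $\mathrm{Fix}(R_i)$. The paper proves this (Proposition~\ref{prop:StrictCofining}) by first enlarging an arbitrary confining $Q$ to contain $\mathrm{Fix}(R_1)$ without changing $[Q\cup\{t\}]$ (Lemmas~\ref{lem:BoundedIncreasing}, \ref{lem:IncreaseingSubset}, \ref{lem:Cofining}), and then by a long chain of explicit permutation constructions (Claims~\ref{claim:Gamma}--\ref{claim:Final}) showing that if $[\mathrm{Fix}(R_1)\cup\{t\}]<[Q\cup\{t\}]$ then $Q$ must absorb all transpositions of consecutive points of $R_1$ and hence all of $\mathfrak{S}_\infty$, contradicting strictness. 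Nothing in your proposal substitutes for this combinatorial core.

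There is also a structural issue with how you propose to rule out ``mixed'' Busemann morphisms. When the image of $\beta$ is not cyclic, $H_n$ does not split as $\ker\beta\rtimes\mathbb{Z}$, so ``confining subsets of $\ker\beta$'' is not the right object; the correct statement (Proposition~\ref{prop:ConfiningToHyp}) produces a confining subset of $[H_n,H_n]=\mathfrak{S}_\infty$ together with coboundedness of $\mathfrak{S}_\infty\rtimes\langle h\rangle$. The paper exploits precisely this: for any focal $[X]$ there is some $t_{i,j}$ with $\beta(t_{i,j})>0$, the second partial Houghton subgroup $H_n(\{i,j\})=\mathfrak{S}_\infty\rtimes\langle t_{i,j}\rangle$ is cobounded (Lemma~\ref{lemma:Cobounded}), its induced focal structure is pinned down by the rank-one classification (Theorem~\ref{thm:FocalTwo}), and one must then still upgrade this to $[X]=[\mathrm{Fix}(R_i)\cup T]$, which requires showing that $T_i=\langle t_{r,s},\ r,s\neq i\rangle$ is bounded in $(H_n,d_X)$ (Claim~\ref{claim:Tbounded}, via Lemma~\ref{lem:CommSubEll}, Corollary~\ref{cor:BoundedOrbits}, and Fact~\ref{fact:SymBounded}) and a Lipschitz retraction argument (Claim~\ref{claim:Lip}). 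Since none of these steps is supplied or replaced by an alternative argument, the classification of the focal structures --- and with it the theorem --- remains unproven in your proposal.
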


\noindent
See Figure~\ref{poset} for a representation of our poset of hyperbolic structures. It is worth mentioning that each focal hyperbolic structure can be realised by an action on a tree, corresponding to a decomposition of the Houghton group as an ascending HNN extension. Theorem~\ref{thm:Intro} answers a question asked in \cite[Example~11.8]{BFFZ}.

\begin{figure}[h!]
\begin{center}
\includegraphics[width=\linewidth]{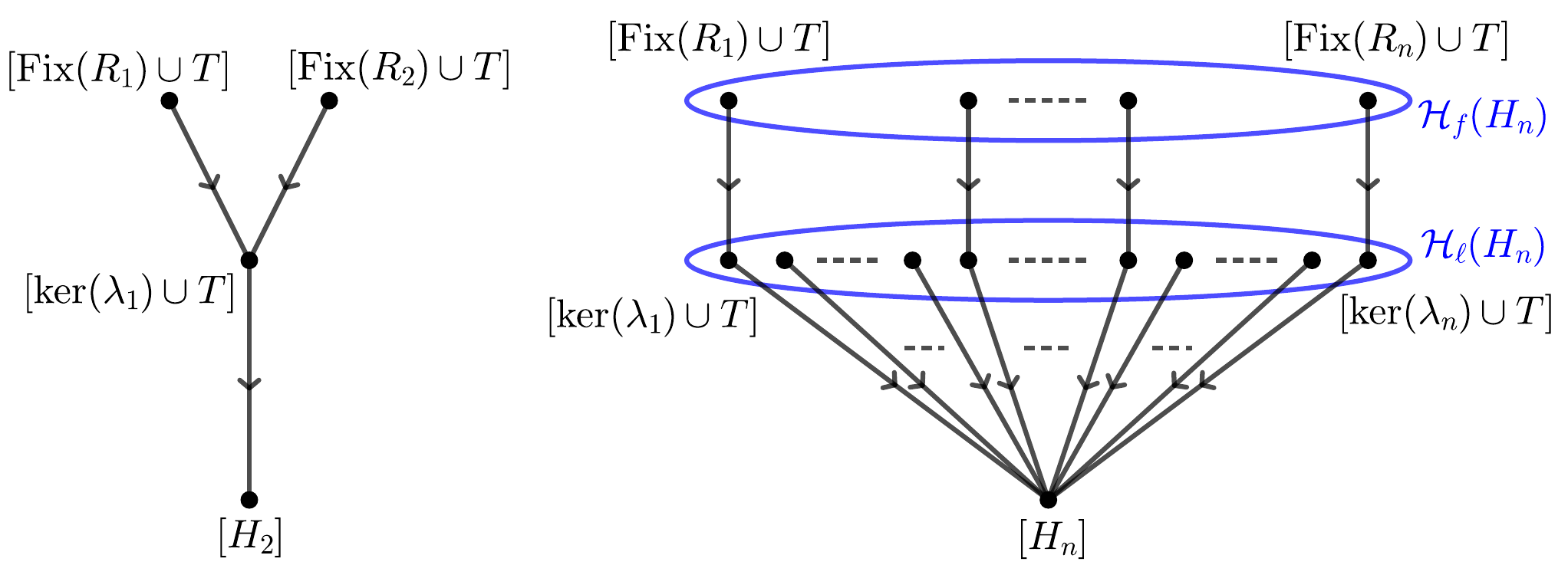}
\caption{Structures of the posets $\mathcal{H}(H_2)$ and $\mathcal{H}(H_n)$ for $n \geq 3$.}
\label{poset}
\end{center}
\end{figure}

\begin{remark}
As pointed out to us by M. Zaremsky, Theorem~\ref{thm:Intro} agrees with the description of the BNS invariants $\Sigma^1$ of Houghton groups obtained in \cite{MR914847}. This is due to the fact that, as shown in \cite{MR914847}, BNS invariants can be characterised by actions on real trees, providing a subset of focal hyperbolic structures. A consequence of Theorem~\ref{thm:Intro} is that there are no other possible focal hyperbolic structures for Houghton groups. 
\end{remark}

\noindent
In order to prove Theorem~\ref{thm:Intro}, we consider a specific family of subgroups of $H_n$, which we call \emph{partial Houghton subgroups}. Namely, for every $I \subset \{1, \ldots, n\}$, we consider the subgroup $H_n(I) \leq H_n$ given by the transformations that fix pointwise a subray in $R_i$ for every $i \notin I$. Equivalently, $H_n(I)= \bigcap_{i \notin I}\mathrm{ker}(\lambda_i)$. Theorem~\ref{thm:SecondPartialHoughton} describes the focal hyperbolic structures of $H_n(I)$ when $I$ is of size two. This is the key step of our argument. Since $H_n(I)$ decomposes as a semidirect product $\mathfrak{S}_\infty \rtimes \mathbb{Z}$, following \cite{MR4069979} and consecutive works, we can use the machinery of confining automorphisms developed in \cite{MR3420526} in order to investigate the hyperbolic structures of $H_n(I)$. Next, in order to deduce Theorem~\ref{thm:Intro}, the key observation is that, with respect to a focal hyperbolic structure, there always exists some $I \subset \{1, \ldots, n\}$ of size two such that $H_n(I)$ is cobounded in $H_n$.

\paragraph{Groups with few focal hyperbolic structures.} The central problem, in the study of posets of hyperbolic structures, is to understand which posets can be realised as the poset of hyperbolic structures of a group and what it tells us about the group. As a particular case, \cite[Problem~8.5]{MR3995018} asks whether there exist groups having an arbitrary finite number of focal hyperbolic structures. The first examples of groups with only finitely many focal hyperbolic structures appeared in \cite{MR4069979}, as lamplighter groups. Next, additional examples were obtained in \cite{MR4602410, MR4585997}, as (generalised) Baumslag-Solitar groups. According to Theorem~\ref{thm:Intro}, Houghton groups can now be added to the list. However, in all these examples, the number of focal hyperbolic structures is always at least two, leaving the existence of groups having a single focal hyperbolic structure unknown. 

\medskip \noindent
Nevertheless, it turns out that such groups can be constructed thanks to Houghton groups, answering completely \cite[Problem~8.5]{MR3995018}.

\begin{thm}\label{thm:IntroSingle}
For all non-negative integers $n > k+1$, there exists a finite extension $H_n \rtimes F$ of $H_n$ that has exactly $k$ focal hyperbolic structures.
\end{thm}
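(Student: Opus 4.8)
The plan is to let a finite group $F$ act on $H_n$ by permuting the rays, thereby permuting the $n$ focal hyperbolic structures $\mathcal{F}_1, \dots, \mathcal{F}_n$ of Theorem~\ref{thm:Intro}, and then to count orbits. More precisely, the symmetric group $\mathfrak{S}_n$ acts on $\mathcal{R}_n$ by permuting the rays $R_1, \dots, R_n$ (permuting the first coordinate of $\{i\} \times \mathbb{N}$), and this action normalises $H_n$ inside the full quasi-automorphism group of $\mathcal{R}_n$; hence any subgroup $F \leq \mathfrak{S}_n$ gives a finite extension $H_n \rtimes F$. An automorphism of $H_n$ coming from $\sigma \in F$ sends the generating set $\mathrm{Fix}(R_i) \cup T$ to $\mathrm{Fix}(R_{\sigma(i)}) \cup T$ (note $T$ is $\mathfrak{S}_n$-invariant as a set since $\sigma$ conjugates $t_{r,s}$ to $t_{\sigma(r),\sigma(s)}$), so $\sigma$ maps the focal structure $\mathcal{F}_i$ to $\mathcal{F}_{\sigma(i)}$.

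First I would establish, as a general lemma, how the poset of hyperbolic structures behaves under a finite-index extension $G \rtimes F$: the focal hyperbolic structures of $G \rtimes F$ correspond to the $F$-orbits of focal hyperbolic structures of $G$ that are ``$F$-invariant enough'' — concretely, that a focal structure of $G \rtimes F$ restricts to a focal structure on $G$ and its $F$-orbit, and conversely an $F$-orbit of focal structures on $G$ that is a single orbit induces a focal structure on $G \rtimes F$ by taking the union of representatives together with $F$. The cleanest route is: a cobounded action of $G$ on a hyperbolic space $Y$ with an $F$-orbit of loxodromic/quasi-parabolic structures can be ``induced up'' to $G \rtimes F$ (for instance via the action on the hyperbolic cone over the disjoint union, or via a standard induction of actions), and conversely restricting a $G \rtimes F$-action to $G$ loses only finite index so preserves hyperbolicity and coboundedness. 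Since $[G \rtimes F : G] < \infty$, the map on hyperbolic structures induced by restriction is well-defined and finite-to-one, and focality (the existence of a fixed end / loxodromic WPD-type behaviour distinguishing focal from general type and lineal) is preserved because it is detected by the Busemann quasi-morphism, which survives passing to finite-index subgroups and supergroups.

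Then the counting is Burnside-flavoured but elementary: choosing $F \leq \mathfrak{S}_n$ to be, say, the stabiliser of a point together with enough transpositions — concretely $F = \mathfrak{S}_{n-1}$ acting on $\{1, \dots, n-1\}$ and fixing $n$, or more flexibly a Young subgroup $\mathfrak{S}_{a_1} \times \cdots \times \mathfrak{S}_{a_k}$ with $a_1 + \cdots + a_k = n$ — the number of $F$-orbits on $\{\mathcal{F}_1, \dots, \mathcal{F}_n\}$ is exactly the number of orbits of $F$ on $\{1, \dots, n\}$, which is $k$. To realise every $k$ with $n > k+1$, take the Young subgroup for the composition $(n-k+1, 1, 1, \dots, 1)$ of $n$ into $k$ parts; the hypothesis $n > k+1$, i.e. $n - k + 1 \geq 2$ with $k \geq 1$, guarantees this is a genuine (nontrivial on at least one block) subgroup and that $H_n \rtimes F$ is a proper finite extension. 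Finally one must check that the induced structures are pairwise non-comparable and that no new focal structures are created — this follows because the restriction-to-$H_n$ map is order-preserving and, by Theorem~\ref{thm:Intro}, $H_n$ has no focal structures beyond the $\mathcal{F}_i$, so any focal structure of $H_n \rtimes F$ restricts into this finite list and is thus accounted for by an orbit.

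The main obstacle I anticipate is the general finite-extension lemma: one must be careful that inducing a hyperbolic cobounded action from $G$ to $G \rtimes F$ indeed stays cobounded and hyperbolic and does not accidentally collapse two distinct $F$-orbits into one structure, nor split one orbit into several; the Busemann/quasi-morphism bookkeeping (to track which structures are focal versus lineal versus general type, and in particular to verify that the single lineal structure dominated by each $\mathcal{F}_i$ behaves correctly under $F$) is where the real work lies. Everything else — the action of $\mathfrak{S}_n$ on $H_n$, the orbit count, the choice of Young subgroup — is routine.
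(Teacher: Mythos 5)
There is a genuine gap, and it lies exactly where you anticipated the ``real work'': your finite-extension lemma is false. Focal hyperbolic structures of $H_n \rtimes F$ do not correspond to $F$-\emph{orbits} of focal structures of $H_n$, but to the $F$-\emph{invariant} ones, i.e.\ to the fixed points of $F$ on $\{\mathcal{F}_1,\ldots,\mathcal{F}_n\}$. Indeed, if $[X]$ is any hyperbolic structure of $G \rtimes F$ with $F$ finite, then since $G$ has finite index the inclusion $G \hookrightarrow G\rtimes F$ is cobounded and $[X]$ restricts to a hyperbolic structure $[Y]$ of $G$ of the same type; moreover, conjugation by any $f \in F$ moves points of $(G\rtimes F, d_X)$ a uniformly bounded distance, so $[Y]$ is invariant under the automorphisms of $G$ induced by $F$; and $[X]$ is recovered from $[Y]$ (one checks $[X]=[Y\cup F]$). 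This is precisely the content of Proposition~\ref{prop:HypStructureFI} (with Lemma~\ref{lem:Finvariant}), which gives a label-preserving poset isomorphism $\mathcal{H}^F(G) \to \mathcal{H}(G\rtimes F)$, $[Y]\mapsto [Y\cup F]$. Consequently, an $F$-orbit of focal structures of size $>1$ contributes \emph{nothing} to $H_n\rtimes F$: since the $\mathcal{F}_i$ are pairwise non-comparable, no structure in such an orbit is $F$-invariant, and your proposed ``induction up'' (disjoint unions, cones, products) cannot produce a focal structure of $H_n\rtimes F$, because any such structure would restrict to an $F$-invariant focal structure of $H_n$, which Theorem~\ref{thm:Intro} rules out. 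A concrete counterexample to your lemma: for $n=2$ and $F=\mathrm{Sym}(2)$ swapping the two rays, your orbit count predicts one focal structure, whereas $H_2\rtimes \mathrm{Sym}(2)$ has none by Theorem~\ref{thm:HoughtonPerm}.

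This also breaks your counting and your choice of subgroup. The Young subgroup for the composition $(n-k+1,1,\ldots,1)$ has $k$ orbits but only $k-1$ fixed points on $\{1,\ldots,n\}$, so the corresponding extension has exactly $k-1$ focal structures, not $k$. The correct choice (the paper's) is $F=\mathrm{Fix}(\{1,\ldots,k\})\cong \mathrm{Sym}(\{k+1,\ldots,n\})$: the hypothesis $n>k+1$ guarantees that $F$ acts on the remaining $n-k\geq 2$ points with no fixed point, so the fixed points of $F$ on $\{1,\ldots,n\}$ are exactly $1,\ldots,k$, and Theorem~\ref{thm:HoughtonPerm} (equivalently, Theorem~\ref{thm:Intro} combined with Proposition~\ref{prop:HypStructureFI}) yields exactly the $k$ focal structures $[\mathrm{Fix}(R_i)\cup T\cup F]$ for $1\leq i\leq k$. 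The parts of your proposal describing the $\mathrm{Sym}(n)$-action on $H_n$ and the fact that $\sigma$ sends $\mathcal{F}_i$ to $\mathcal{F}_{\sigma(i)}$ are correct and are indeed used in the paper; the missing idea is that invariance, not transitivity on an orbit, is what survives the finite extension.
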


\noindent
More precisely, identifying the group $\mathrm{QAut}(\mathcal{R}_n)$ of quasi-automorphisms $\mathcal{R}_n \to \mathcal{R}_n$ with $H_n \rtimes \mathrm{Sym}(n)$, where $\mathrm{Sym}(n)$ permutes the $n$ rays of $\mathcal{R}_n$, we consider the finite extensions $H_n(G):= H_n \rtimes G$ for $G \leq \mathrm{Sym}(n)$. Thanks to Theorem~\ref{thm:Intro}, it is possible to describe precisely the structure of the poset of hyperbolic structures of $H_n(G)$. See Theorem~\ref{thm:HoughtonPerm}. In particular, it turns out that $H_n(G)$ has exactly $k$ focal hyperbolic structures, where $k$ is the number of fixed points for the action of $G$ on $\{1, \ldots, n\}$.

\paragraph{Acknowledgements.} We are grateful to S. Balasubramanya, F. Fournier-Facio, and M. Zaremsky for their comments on a preliminary version of our paper.

\section{Preliminaries}\label{section:Preliminaries}

\noindent
In this section, we collect basic definitions and results that we will use in the rest of the article, related to actions on hyperbolic spaces and Houghton groups.

\paragraph{Classification of actions on hyperbolic spaces.} First, we recall some terminology related to a actions on hyperbolic spaces. 

\begin{definition}
Let $G$ be a group acting on a hyperbolic space $X$. The action is:
\begin{itemize}
	\item \emph{bounded} if orbits are bounded;
	\item \emph{horocyclic} if it is unbounded but has no loxodromic element;
	\item \emph{lineal} if it has a loxodromic element and all the loxodromic elements have the same endpoints at infinity;
	\item \emph{focal} if it has a loxodromic element, is not lineal, and any two loxodromic elements have a common endpoint;
	\item \emph{general type} if it has two loxodromic elements with no common endpoints.
\end{itemize}
\end{definition}

\noindent
Since we consider only cobounded actions, horocyclic actions will never occur. Let us record for future use a couple of easy lemmas.

\begin{lemma}\label{lem:CommSubEll}
Let $G$ be a group acting coboundedly on a hyperbolic space $X$. If $[G,G]$ has bounded orbits, then $G \curvearrowright X$ is lineal or bounded. 
\end{lemma}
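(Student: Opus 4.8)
The plan is to use the standard fact that a cobounded action of a group $G$ on a hyperbolic space $X$, if it has a loxodromic element, has a well-defined action on the Gromov boundary $\partial X$, and that the type of the action is detected by the size and structure of the limit set. First I would observe that since $G \curvearrowright X$ is cobounded, it is either bounded, lineal, focal, or of general type (horocyclic being excluded by coboundedness, as noted just above the statement). So it suffices to rule out the focal and general type cases under the hypothesis that $[G,G]$ has bounded orbits.

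The key point is the following: if $G \curvearrowright X$ is focal or of general type, then $G$ contains loxodromic elements whose commutators "see" the hyperbolic geometry. Concretely, in the general type case there are two independent loxodromic elements $g, h$ (with pairwise distinct endpoints), and a ping-pong argument produces, among words in $g$ and $h$, a loxodromic element lying in $[G,G]$ — for instance, one shows that a high power of a commutator like $[g,h]$, or more robustly an element of the form $g^N h^N g^{-N} h^{-N}$ for $N$ large, is loxodromic. Since $[G,G]$ is then a subgroup containing a loxodromic element, its orbits are unbounded, contradicting the hypothesis. In the focal case, $G$ fixes a unique point $\xi \in \partial X$, all loxodromic elements share $\xi$ as an endpoint, and the Busemann quasi-cocycle at $\xi$ gives a quasimorphism $\beta \colon G \to \mathbb{R}$ which is a genuine homomorphism up to bounded error and is unbounded (it is positive on a loxodromic element attracting to $\xi$). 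The kernel-like subgroup on which $\beta$ is bounded contains $[G,G]$ (commutators are bounded under any homogeneous quasimorphism, hence bounded under $\beta$), and on this subgroup the action is bounded or horocyclic; but the focal action being cobounded forces elements outside $\ker$-type set, and one extracts that $[G,G]$ must still move points of $X$ unboundedly — again because in a focal action the stabiliser-complement structure forces $[G,G]$ to contain loxodromics or at least act with unbounded orbits. The cleanest route: in a focal action, the subgroup $K$ of elements acting elliptically (equivalently, with bounded $\beta$) is normal, $G/K$ is abelian (in fact embeds in $\mathbb{R}$ via $\beta$), so $[G,G] \subseteq K$; but coboundedness of a focal action forces $K$ to have unbounded orbits (otherwise $X$ would be quasi-isometric to a line and the action lineal, not focal). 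Hence $[G,G]$, being a finite-index... no — being cobounded-in-$K$ is not automatic. The honest statement is that $K$ acts coboundedly-ish and $[G,G]$ is cofinite in $K$ only in nice cases; so I will instead argue that $[G,G]$ having bounded orbits forces $K$ itself to have bounded orbits (since a focal group is generated by $K$ together with one loxodromic $t$, and the orbit of $K$ would then be bounded while $\langle t \rangle$ translates, making $X$ lineal).

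So the argument I would write is: suppose $G \curvearrowright X$ is not bounded; then it has a loxodromic element (no horocyclic actions), so it is lineal, focal, or general type. If it is general type, a ping-pong argument yields a loxodromic element in $[G,G]$, contradicting bounded orbits. If it is focal, let $K \trianglelefteq G$ be the kernel of the homomorphism to $\mathbb{R}$ given (up to bounded error) by the Busemann function at the fixed boundary point; then $[G,G] \leq K$, and $G = K \rtimes' \langle t\rangle$-like with $t$ loxodromic. I claim $K$ has bounded orbits: indeed $[G,G]$ does by hypothesis, and $K / [G,G]$ is abelian and, being a subgroup of $G/[G,G]$ which surjects onto... hmm, this requires that $K/[G,G]$ acts with bounded orbits too, which need not be free. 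The hard part — and I expect this to be the main obstacle — is precisely handling the focal case rigorously: passing from "$[G,G]$ has bounded orbits" to "the whole elliptic radical $K$ has bounded orbits", which is needed to collapse focal to lineal. I would resolve it by noting that the quotient $\bar{X} = X \mathbin{/\!/} K$ (or rather the "curtain/axis" structure of a focal action) shows $K$-orbits are uniformly close to the horocycle-like sets, and coboundedness plus $[G,G]$ bounded pins these down; alternatively, invoke directly that a focal action's elliptic radical is generated by finitely many conjugates of a finitely generated subgroup of $[G,G]$ when... — but the slickest fix is to observe that the statement only needs to be applied to the Houghton setting where it will follow from the structure theory, so I would prove it via: in a focal action, $G$ is not boundedly generated by $K$ and a loxodromic unless $K$ acts with unbounded orbits, and "$K$ acts with bounded orbits" $\Rightarrow$ "$G$ lineal" directly, hence the only way to be focal is $K$ unbounded, hence (by a contrapositive chase using that $[G,G]$ is "large" inside $K$ — precisely, $G/[G,G]$ being virtually $\mathbb{Z}$ or $\mathbb{Z}$ in the cases of interest) we get $[G,G]$ unbounded. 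Given the level of the paper, I would state the focal case via: the action factors, up to bounded error, through an action of $G/[G,G]$ together with the observation that the "radical" $K$ has bounded orbits iff $[G,G]$ does (using that $[G,G]$ is cobounded in $K$, which holds because $K$ itself has an induced non-elementary... no). Let me just commit: I will prove it by the quasimorphism argument, and the one nontrivial input is $K$-orbits bounded $\iff$ $[G,G]$-orbits bounded, which I will justify by the fact that $[G,G]$ is cobounded in $K$ — true whenever $G/[G,G]$ is finitely generated abelian, as it is for Houghton groups and their finite extensions (which is all we need) — so the main obstacle reduces to verifying this coboundedness, a routine check with the explicit structure.
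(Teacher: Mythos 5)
Your general-type case is fine: a ping-pong argument with two independent loxodromics does produce a loxodromic element of $[G,G]$, contradicting bounded orbits. The problem is the focal case, which is exactly the case the paper needs (the lemma is invoked to rule out focality in the proof of Claim~\ref{claim:Tbounded}), and which you flag as the main obstacle but never actually close. Your route needs the implication ``$[G,G]$ has bounded orbits $\Rightarrow$ the Busemann-kernel subgroup $K$ has bounded orbits'', and your final justification --- that $[G,G]$ is cobounded in $K$ because $G/[G,G]$ is finitely generated abelian ``in the cases of interest'' --- has two defects. First, it would at best prove a special case, whereas the lemma is stated and used for an arbitrary group acting coboundedly on a hyperbolic space; restricting to Houghton-type groups changes the statement. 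Second, even in those cases the claim is not routine: coset representatives of $[G,G]$ in $K$ need not move a basepoint a bounded amount, since an element with vanishing homogenised Busemann value can be parabolic rather than elliptic, so finite generation of $K/[G,G]$ together with boundedness of $[G,G]$-orbits does not yield boundedness of $K$-orbits. There is also an earlier slip: ``commutators are bounded under any homogeneous quasimorphism'' is true for single commutators (bounded by the defect), but not for arbitrary products of commutators, i.e.\ for all of $[G,G]$; one only gets $[G,G]$ inside the bounded set of $\beta$ when the homogenisation is an honest homomorphism, which requires regularity/agreeability and is not available for a general $G$.

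The paper's proof sidesteps all of this with one tool you did not use: since $[G,G]$ is a normal subgroup with bounded orbits, \cite[Lemma~4.10]{NL} provides a hyperbolic space $Y$ and a quasi-isometry $X \to Y$ equivariant over the quotient map $G \to G/[G,G]$. The induced cobounded action of the abelian group $G/[G,G]$ on $Y$ cannot be focal or of general type (any element commuting with a loxodromic preserves its pair of endpoints, so an abelian group with a loxodromic preserves a pair of boundary points), and it cannot be horocyclic by coboundedness; hence it is bounded or lineal, and the same holds for $G \curvearrowright X$. If you want to salvage your two-case structure, replace the $K$-coboundedness claim in the focal case by this quotient argument; as written, the focal case is a genuine gap.
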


\begin{proof}
It follows for instance from \cite[Lemma~4.10]{NL} that there exist a hyperbolic space $Y$ and an $\alpha$-equivariant quasi-isometry $X \to Y$ where $\alpha : G \to G/[G,G]$ is the quotient map. Since $G/[G,G]$ is abelian, the action cannot be focal nor general type. The action cannot be horocyclic either since it is cobounded. Therefore, the action must be either bounded or lineal. 
\end{proof}

\noindent
In our next lemma, a group $G$ is said \emph{boundedly generated} by a collection $(H_1, \ldots, H_n)$ of subgroups if $G= H_1 \cdots H_n$. 

\begin{lemma}\label{lem:BoundedOrbits}
Let $G$ be a group acting on a metric space $X$. Assume that $G$ is boundedly generated by some collection $(H_1, \ldots, H_n)$. If each $H_i$ has bounded orbits in $X$, then $G$ has bounded orbits in $X$. 
\end{lemma}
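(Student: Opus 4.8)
The plan is to reduce everything to a single telescoping estimate along a fixed orbit. Since a group action on a metric space (in the sense used here) is by isometries, it suffices to show that the orbit of one point is bounded: if $d(gx_0,x_0)\le C$ for all $g\in G$, then for any $y\in X$ we have $d(gy,y)\le d(gy,gx_0)+d(gx_0,x_0)+d(x_0,y)=2d(x_0,y)+C$, so every orbit is bounded. So fix a basepoint $x_0\in X$.

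Next I would record the quantities $R_i:=\sup_{h\in H_i} d(hx_0,x_0)$. By hypothesis each $H_i$ has bounded orbits, so $R_i<\infty$ for every $1\le i\le n$. Now take an arbitrary $g\in G$. Using $G=H_1\cdots H_n$, write $g=h_1h_2\cdots h_n$ with $h_i\in H_i$, and estimate by inserting the partial products: setting $g_k:=h_1\cdots h_k$ (with $g_0:=1$), the triangle inequality gives
$$d(gx_0,x_0)\ \le\ \sum_{k=1}^{n} d(g_kx_0,g_{k-1}x_0).$$
Since $g_k=g_{k-1}h_k$ and $g_{k-1}$ acts by an isometry, each summand equals $d(g_{k-1}h_kx_0,g_{k-1}x_0)=d(h_kx_0,x_0)\le R_k$. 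Hence $d(gx_0,x_0)\le R_1+\cdots+R_n$, a bound independent of $g$.

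This shows the orbit $G\cdot x_0$ is bounded, and by the first paragraph every orbit of $G$ is bounded, which is the claim. There is no real obstacle here: the only point requiring a moment of care is that the estimate must be slid back to $x_0$ one factor at a time using that the partial products act isometrically, rather than trying to bound $d(h_1\cdots h_nx_0,x_0)$ in one step; the ordering of the product $H_1\cdots H_n$ plays no role once this telescoping is set up.
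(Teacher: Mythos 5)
Your proof is correct and is essentially the paper's argument: fix a basepoint and bound $d(gx_0,x_0)$ by summing the displacements of the factors $h_1,\ldots,h_n$ via the triangle inequality, using that the partial products act isometrically (the paper phrases this as an induction over $k$ rather than a telescoping sum, which is the same computation). The extra opening remark reducing to a single orbit is fine but not needed for the statement as used.
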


\begin{proof}
Fix a basepoint $o \in X$. We want to prove by induction over $k$ that the $H_1 \cdots H_k$-orbit of $o$ is bounded. For $k=1$, this is true by induction. Assuming that $k \geq 2$ and that $H_1 \cdots H_{k-1} \cdot o$ is bounded, we have
$$d(o,h_1 \cdots h_k \cdot o ) \leq d(o,h_1 \cdots h_{k-1} \cdot o ) + d(o,h_k \cdot o) \leq A+B$$
for all $h_1 \in H_1, \ldots, h_k \in H_k$, where $A$ (resp.\ $B$) denotes the diameter of the $H_1 \cdots H_{k-1}$-orbit of $o$ (resp.\ of the $H_k$-orbit). Therefore, the $H_1 \cdots H_k$-orbit of $o$ has diameter $\leq A+B$.
\end{proof}

\begin{cor}\label{cor:BoundedOrbits}
Let $G$ be a group acting on a metric space $X$. Fix a finite generating set $\{s_1, \ldots, s_n\}$. If $[G,G]$ and each $s_i$ have bounded orbits in $X$, then $G$ has bounded orbits in $X$.
\end{cor}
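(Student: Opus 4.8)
The plan is to reduce Corollary~\ref{cor:BoundedOrbits} to Lemma~\ref{lem:BoundedOrbits} by exhibiting an explicit bounded generation of $G$. The key observation is that $G$ is boundedly generated by the collection of cyclic subgroups $(\langle s_1 \rangle, \ldots, \langle s_n \rangle)$ together with $[G,G]$.

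To see this, take an arbitrary $g \in G$ and consider its image $\bar{g}$ in the abelianisation $G/[G,G]$. Since $G/[G,G]$ is abelian and generated by the images $\bar{s}_1, \ldots, \bar{s}_n$, we can collect factors and write $\bar{g} = \bar{s}_1^{k_1} \cdots \bar{s}_n^{k_n}$ for suitable integers $k_1, \ldots, k_n \in \mathbb{Z}$. Consequently $g \cdot (s_1^{k_1} \cdots s_n^{k_n})^{-1}$ lies in $[G,G]$, so that $g = s_1^{k_1} \cdots s_n^{k_n} \cdot c$ for some $c \in [G,G]$. This proves $G = \langle s_1 \rangle \cdots \langle s_n \rangle \cdot [G,G]$, i.e.\ $G$ is boundedly generated by $(\langle s_1 \rangle, \ldots, \langle s_n \rangle, [G,G])$.

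Now, by hypothesis $[G,G]$ has bounded orbits in $X$, and for each $i$ the element $s_i$ has bounded orbits in $X$, which exactly means that the cyclic subgroup $\langle s_i \rangle$ has bounded orbits in $X$. Applying Lemma~\ref{lem:BoundedOrbits} to the bounded generation above, we conclude that $G$ has bounded orbits in $X$. There is essentially no obstacle here: the only point requiring a word is the bounded-generation reduction, which is immediate once one passes to the abelianisation.
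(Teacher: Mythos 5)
Your proof is correct and follows exactly the same route as the paper: the paper's proof simply states that $G$ is boundedly generated by $(\langle s_1\rangle,\ldots,\langle s_n\rangle,[G,G])$ and invokes Lemma~\ref{lem:BoundedOrbits}, while you additionally spell out the (easy) verification of this bounded generation via the abelianisation.
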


\begin{proof}
It suffices to notice that $G$ is boundedly generated by $(\langle s_1 \rangle, \ldots, \langle s_n \rangle, [G,G])$ and to apply Lemma~\ref{lem:BoundedOrbits}. 
\end{proof}

\paragraph{Busemann quasimorphisms.} When a group acts on a hyperbolic space with a fixed point at infinity, there is map that records, roughly speaking, the amount with which an element translates the points of the space towards the fixed point at infinity. This map is not too far from being a morphism to $\mathbb{R}$. This property is recorded by the notion of \emph{quasimorphism}, we which we define now. 

\begin{definition}
Let $G$ be a group. A function $q : G \to \mathbb{R}$ is a \emph{quasimorphism} if there exists $D \geq 0$ such that
$$|q(gh) - q(g) - q(h)| \leq D \text{ for all } g,h \in G.$$
It is \emph{homogeneous} if it restricts to a morphism on each cyclic subgroup. Its \emph{homogenisation} is 
$$g \mapsto \lim\limits_{n \to + \infty} \frac{q(g^n)}{n}.$$
A group is \emph{agreeable} if all its homogeneous quasimorphisms are morphisms.
\end{definition}

\noindent
Examples of agreeable groups include amenable groups (see for instance \cite[Proposition~2.65]{MR2527432}), so in particular Houghton groups are agreeable. 

\begin{definition}
Let $G$ be a group acting on a hyperbolic space $X$ with fixed point $\xi \in \partial X$ at infinity. Given a basepoint $o \in X$ and a sequence $(x_i)_{i \geq 0}$ converging to $\xi$, 
$$g \mapsto \limsup\limits_{n \to + \infty} \left( d(o,x_n) - d(go,x_n) \right)$$
is the \emph{Busemann quasimorphism}. 
\end{definition}

\noindent
It is worth mentioning that the Busemann quasimorphism turns out to be independent of the choices of $o$ and $(x_i)_{i \geq 0}$. When its homogenisation is a morphism, the action is \emph{regular}; and then we refer to this homogenisation as the \emph{Busemann morphism}. Notice that a Busemann morphism is non-trivial exactly on the loxodromic elements.

\medskip \noindent
Essentially by definition, a lineal action on a hyperbolic space stabilises a pair of points at infinity, namely the two endpoints common to all the loxodromic elements. We may distinguish two cases: if the group fixes these two points, the action is \emph{oriented}; otherwise, it is \emph{non-oriented}. The typical example of a non-oriented lineal action is the action of $\mathbb{D}_\infty$ on $\mathbb{R}$. In fact, it follows from \cite[Corollary~1.15]{BFFZ} that this is essentially the only example:

\begin{lemma}\label{lem:NonOrientedLineal}
A finitely generated amenable group admits a non-oriented lineal hyperbolic structure if and only if it surjects onto $\mathbb{D}_\infty$. 
\end{lemma}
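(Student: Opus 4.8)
The plan is to prove the two implications separately, the backward one being an easy pull-back and the forward one the substantial part.

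\emph{Sufficiency.} Assuming $G$ surjects onto $\mathbb{D}_\infty$ via some $\pi$, I would pull back the standard action $\mathbb{D}_\infty \curvearrowright \mathbb{R}$ (cocompact by isometries, lineal, preserving but not fixing the pair $\{-\infty,+\infty\}$) along $\pi$. The resulting action $G \curvearrowright \mathbb{R}$ is cobounded because a $\mathbb{D}_\infty$-orbit is coarsely dense, it has loxodromic elements (preimages of a translation) all sharing the endpoints $\pm\infty$, and $G$ exchanges these endpoints since $\pi$ is onto. Hence $G \curvearrowright \mathbb{R}$ is a cobounded non-oriented lineal action, i.e.\ defines a non-oriented lineal hyperbolic structure in $\mathcal{H}(G)$; amenability plays no role here.

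\emph{Necessity.} Conversely, starting from a cobounded non-oriented lineal action $G \curvearrowright X$ with common endpoint pair $\{\xi_+,\xi_-\}$ of the loxodromics, I would let $G^0 \trianglelefteq G$ be the index-two subgroup fixing $\xi_+$ and $\xi_-$ (the kernel of $G \to \mathrm{Sym}\{\xi_+,\xi_-\}$), noting that every loxodromic element fixes its own endpoints and hence lies in $G^0$. As a finite-index subgroup of a finitely generated amenable group, $G^0$ is finitely generated amenable, hence agreeable. I would then consider the Busemann quasimorphism of $G^0 \curvearrowright X$ at $\xi_+$: its homogenisation is a homogeneous quasimorphism, so by agreeability it is a homomorphism $\beta : G^0 \to \mathbb{R}$, nonzero precisely on the loxodromic elements (so $\beta \neq 0$). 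For $g \in G \setminus G^0$, conjugation by $g$ exchanges $\xi_+$ and $\xi_-$, and since the Busemann functions at $\xi_+$ and at $\xi_-$ agree up to sign and a bounded error, I expect $\beta(ghg^{-1}) = -\beta(h)$ for all $h \in G^0$ (while for $g \in G^0$ one has $+$, homogeneous quasimorphisms being conjugation-invariant). One could alternatively extract such a $G$-anti-invariant homomorphism by first invoking \cite[Corollary~1.15]{BFFZ} and then upgrading the resulting quasimorphism via agreeability of $G^0$.

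\emph{From $\beta$ to a dihedral quotient.} Finally, the image $\beta(G^0)$ is a nontrivial finitely generated subgroup of $(\mathbb{R},+)$, hence isomorphic to $\mathbb{Z}^d$ with $d \geq 1$, on which the outer $\mathbb{Z}/2$-action coming from $G/G^0$ is multiplication by $-1$; composing $\beta$ with any surjection $\mathbb{Z}^d \twoheadrightarrow \mathbb{Z}$ (automatically $\mathbb{Z}/2$-equivariant for these actions) yields a surjection $\phi : G^0 \twoheadrightarrow \mathbb{Z}$ with $\phi(ghg^{-1}) = -\phi(h)$ for $g \in G \setminus G^0$. Then $N := \ker\phi$ is normal in $G$, and $G/N$ sits in an extension $1 \to \mathbb{Z} \to G/N \to \mathbb{Z}/2 \to 1$ with the $\mathbb{Z}/2$ acting by $-1$; writing $t$ for a lift of the generator of $\mathbb{Z}/2$ and $a$ for a generator of $\mathbb{Z}$, one gets $tat^{-1}=a^{-1}$ and $t^2 = a^k$, and conjugating the second relation by $t$ forces $a^{2k}=1$, so $k=0$ by torsion-freeness, whence $G/N \cong \langle a,t \mid t^2=1,\, tat^{-1}=a^{-1}\rangle = \mathbb{D}_\infty$ and $G$ surjects onto $\mathbb{D}_\infty$. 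I expect the main obstacle to be the bookkeeping around the Busemann quasimorphism: checking it is well defined on $G^0$, that agreeability turns its homogenisation into an honest homomorphism nonvanishing exactly on loxodromics, and that conjugation by an orientation-reversing element negates it.
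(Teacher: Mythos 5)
Your argument is correct, but it is genuinely more self-contained than what the paper does: the paper gives no proof at all for this lemma and simply imports it from \cite[Corollary~1.15]{BFFZ} (your parenthetical "alternatively invoke BFFZ" is in fact the paper's entire proof). Your sufficiency direction is the obvious pull-back, and indeed uses neither amenability nor finite generation. Your necessity direction is the standard quasimorphism argument: pass to the index-two subgroup $G^0$ fixing the two ends, use amenability (agreeability) to homogenise the Busemann quasimorphism into a genuine nonzero morphism $\beta:G^0\to\mathbb{R}$, use finite generation to get $\beta(G^0)\cong\mathbb{Z}^d$ and hence a surjection $\phi:G^0\twoheadrightarrow\mathbb{Z}$, observe that any such $\phi$ is automatically anti-equivariant under the orientation-reversing coset since every homomorphism to $\mathbb{Z}$ commutes with $-1$, and then identify $G/\ker\phi$ with $\mathbb{D}_\infty$. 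Two steps deserve the explicit verification you flag: (a) the relation $\beta(ghg^{-1})=-\beta(h)$ for $g\notin G^0$, which follows because conjugation by $g$ transports the Busemann quasimorphism at $\xi_+$ to the one at $\xi_-$, and for a lineal action these homogenise to exact negatives of each other; and (b) the final identification, where $t^2=a^k$ together with $ta^kt^{-1}=a^{-k}$ forces $k=0$, and the resulting surjection $\mathbb{D}_\infty\to G/N$ is injective because its kernel meets the translation subgroup trivially and $\mathbb{D}_\infty$ has no nontrivial normal subgroup avoiding that subgroup. Both are routine, so your proof stands; what it buys over the paper's approach is independence from \cite{BFFZ}, at the cost of redoing a known argument.
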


\noindent
It is not difficult to see that Houghton groups never surject onto $\mathbb{D}_\infty$ (see Fact~\ref{fact:NotOntoDinfty}), so we can focus on the set of oriented lineal actions, which we denote by $\mathcal{H}_\ell^+(\cdot)$. As we want to explain now, oriented lineal actions of a group $G$ are naturally encoded by the \emph{projective space of characters} 
$$\Pi(G):= (H^1(G,\mathbb{R}) \backslash \{0\})/ \mathbb{R}^\ast.$$ 
Recall from \cite[Lemma~4.15]{MR3995018} that, for every non-trivial morphism $\varphi \in H^1(G,\mathbb{R})$, there exists a constant $C_\varphi \geq 0$ such that $\varphi^{-1}([-C,C])$ gives an oriented lineal hyperbolic structure of $G$ for every $C \geq C_\varphi$. Moreover, the structure does not depend on the choice of $C$ and an element $g \in G$ is loxodromic if and only if $\varphi(g) \neq 0$. Then:

\begin{lemma}\label{lem:LinealActions}
Let $G$ be an agreeable group. Then
$$\left\{ \begin{array}{ccc} H^1(G, \mathbb{R}) \backslash \{0\} & \to & \mathcal{H}_\ell^+(G) \\ \varphi & \mapsto & \left[ \varphi^{-1}([-C_\varphi,C_\varphi]) \right] \end{array} \right.$$
induces an $\mathrm{Aut}(G)$-equivariant bijection $\Pi(G) \to \mathcal{H}_\ell^+(G)$. 
\end{lemma}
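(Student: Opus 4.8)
The plan is to exhibit the inverse of the stated map via Busemann morphisms and to check that both composites are the identity, together with well-definedness on $\Pi(G)$ and $\mathrm{Aut}(G)$-equivariance. First, the assignment $\varphi \mapsto [\varphi^{-1}([-C_\varphi,C_\varphi])]$ descends to $\Pi(G)$: one has $(-\varphi)^{-1}([-C,C]) = \varphi^{-1}([-C,C])$ and $(\lambda\varphi)^{-1}([-C,C]) = \varphi^{-1}([-C/|\lambda|,C/|\lambda|])$ for every $\lambda \neq 0$, so the independence (recalled above) of the resulting structure on the choice of $C \geq C_\varphi$ shows that $\varphi$ and $\lambda\varphi$ have the same image in $\mathcal{H}_\ell^+(G)$.

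For injectivity, suppose $\varphi,\psi \in H^1(G,\mathbb{R})\setminus\{0\}$ give the same structure. Then in particular $\varphi^{-1}([-C_\varphi,C_\varphi]) \preceq \psi^{-1}([-C_\psi,C_\psi])$, so there is $N$ such that every $y \in G$ with $|\psi(y)| \leq C_\psi$ is a product of at most $N$ elements of $\varphi^{-1}([-C_\varphi,C_\varphi])$; since $\varphi$ is a genuine morphism, this yields the implication $|\psi(y)| \leq C_\psi \Rightarrow |\varphi(y)| \leq NC_\varphi$. Applying it to powers of a single element gives $\ker\psi \subseteq \ker\varphi$, and applying it to elements $y^az^b$ with $a,b \in \mathbb{Z}$ chosen by simultaneous Diophantine approximation so that $|a\psi(y)+b\psi(z)| \leq C_\psi$ while $|a|,|b|\to\infty$ forces $\varphi(y)\psi(z) = \varphi(z)\psi(y)$; hence $\varphi$ and $\psi$ are proportional, i.e.\ $[\varphi] = [\psi]$ in $\Pi(G)$. (Alternatively, injectivity follows once one knows, as used below for surjectivity, that the Busemann morphism of $[\varphi^{-1}([-C_\varphi,C_\varphi])]$ is proportional to $\varphi$.)

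For surjectivity, let $[X] \in \mathcal{H}_\ell^+(G)$, so that $G \curvearrowright \mathrm{Cayl}(G,X)$ is oriented lineal, and let $q$ be the Busemann quasimorphism at one of the two fixed endpoints. Its homogenisation is a homogeneous quasimorphism, hence a morphism $\varphi \in H^1(G,\mathbb{R})$ since $G$ is agreeable; thus the action is regular with Busemann morphism $\varphi$, and $\varphi \neq 0$ because the action has a loxodromic element while a Busemann morphism vanishes exactly off the loxodromic elements. It then remains to identify $[X]$ with $[\varphi^{-1}([-C_\varphi,C_\varphi])]$: since $\mathrm{Cayl}(G,X)$ is a quasi-line on which $G$ acts coboundedly fixing both ends, the orbit map $g \mapsto g\cdot o$ is a quasi-isometry onto a coarsely dense subset of a line, along which $g$ is displaced by $\varphi(g)$ up to bounded error, so $\|g\|_X$ and $|\varphi(g)|$ are comparable up to multiplicative and additive constants; hence $X$ and $\varphi^{-1}([-C_\varphi,C_\varphi])$ are $\preceq$-equivalent. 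This is the lineal case of the correspondence in \cite{MR3995018}; see also \cite{NL}.

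Equivariance is routine: $\mathrm{Aut}(G)$ acts on $H^1(G,\mathbb{R})$ by $\alpha\cdot\varphi := \varphi\circ\alpha^{-1}$ and on $\mathcal{H}(G)$ by $\alpha\cdot[S] := [\alpha(S)]$, and since $(\varphi\circ\alpha^{-1})^{-1}([-C,C]) = \alpha(\varphi^{-1}([-C,C]))$ with $\mathrm{Cayl}(G,\alpha(S)) \cong \mathrm{Cayl}(G,S)$, the map intertwines the two actions and descends to $\Pi(G)$. The main obstacle is the last step of surjectivity --- showing that an arbitrary oriented lineal structure is recovered from its Busemann morphism --- which rests on the rigidity of cobounded lineal actions on quasi-lines; the Diophantine step in the injectivity argument is elementary but needs to be handled with a little care.
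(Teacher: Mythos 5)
Your proposal is correct, and for most of the statement it follows the same skeleton as the paper's (admittedly sketchy) proof: well-definedness on $\Pi(G)$ and $\mathrm{Aut}(G)$-equivariance by direct manipulation of the sets $\varphi^{-1}([-C,C])$, and surjectivity by homogenising the Busemann quasimorphism of an oriented lineal action (a genuine morphism by agreeability) and identifying $[X]$ with the structure it defines, via the quasi-line geometry of a cobounded lineal action; your extra details here fill in exactly what the paper leaves implicit. The genuine divergence is injectivity. The paper argues that two characters distinct in $\Pi(G)$ have distinct kernels and then separates the two structures by an element lying in one kernel but loxodromic for the other; you instead assume the two structures are equivalent, translate the domination into the implication $|\psi(y)| \leq C_\psi \Rightarrow |\varphi(y)| \leq N C_\varphi$ (valid since $\varphi$ is a morphism bounded by $C_\varphi$ on the generators), deduce $\mathrm{ker}(\psi) \subset \mathrm{ker}(\varphi)$ from powers, and force proportionality by testing on elements $y^az^b$ chosen by Diophantine approximation. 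Your route is more robust: the paper's kernel step does not hold for arbitrary non-proportional characters (e.g.\ on $\mathbb{Z}^2$ the injective characters $(a,b) \mapsto a+b\sqrt{2}$ and $(a,b) \mapsto a+b\sqrt{3}$ share the trivial kernel), so as written the paper's sketch only separates characters with distinct kernels, whereas your length-comparison argument -- or your parenthetical alternative, namely that the Busemann morphism of $\Phi(\varphi)$ is proportional to $\varphi$, which recovers injectivity from the surjectivity analysis -- covers all cases. What the paper's argument buys is brevity when the kernels do differ; what yours buys is a complete proof, at the cost of the (elementary but necessary) Diophantine care you already flag.
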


\noindent
In this statement, we make $\mathrm{Aut}(G)$ act on $\mathcal{H}_\ell^+(G)$ via
$$\varphi \ast [X] := [\varphi(X)], \ \varphi \in \mathrm{Aut}(G), [X] \in \mathcal{H}_\ell^+(G);$$
and we make it act on $\Pi(G)$ via
$$\varphi \ast \psi := \psi \circ \varphi^{-1}, \ \varphi \in \mathrm{Aut}(G), \psi \in H^1 (G, \mathbb{R}).$$
Despite the fact that we did not find this statement in the literature, Lemma~\ref{lem:LinealActions} is well-known to specialists, so we only include a sketch of proof. 

\begin{proof}[Sketch of proof of Lemma~\ref{lem:LinealActions}.]
It is clear from what we already said that we get a Well-defined map $\Phi : \Pi(G) \to \mathcal{H}_\ell^+(G)$. It is also clear that it is $\mathrm{Aut}(G)$-equivariant. What we need to justify that is that this map is a bijection. Surjectivity follows from the fact that $G$ is agreeable, since, for every oriented lineal hyperbolic structure $[X] \in \mathcal{H}_\ell^+(G)$, we have $\Phi(\beta_X) = [X]$ where $\beta_X$ denotes the Busemann morphism given by the action of $G$ on $\mathrm{Cayl}(G,X)$. For the injectivity, if $\varphi_1, \varphi_2 \in H^1(G, \mathbb{R})$ are distinct in $\Pi(G)$, then their kernels must be different. So we can fix an element $g\in G$, say, in the kernel of $\varphi_1$ but not in the kernel of $\varphi_2$. Then $g$ has bounded orbits with respect to $\Phi(\varphi_1)$ but is loxodromic with respect to $\Phi(\varphi_2)$, proving that the two hyperbolic structures $\Phi(\varphi_1)$ and $\Phi(\varphi_2)$ must be different. 
\end{proof}

\paragraph{Houghton groups.} Recall from the introduction that $\mathcal{R}_n$ denotes the disjoint unions of $n$ rays $R_i:= \{i\} \times \mathbb{N}$, $1 \leq i \leq n$; and that the Houghton group $H_n$ is the group of quasi-automorphisms $\mathcal{R}_n \to \mathcal{R}_n$ that restrict eventually to translations on each ray. Clearly, $H_n$ contains the group of all finitely-supported permutations of $\mathcal{R}_n$, which we denote by $\mathfrak{S}_\infty$. For all distinct $1 \leq i,j \leq n$, we denote by $t_{i,j} \in H_n$ the element that fixes pointwise each ray $R_k$, $k \neq i,j$, and that translates the vertices of $R_i \cup R_j$ from $R_i$ to $R_j$. 

\medskip \noindent
Finally, for every $1 \leq i \leq n$, we denote by $\lambda_i : H_n \to \mathbb{Z}$ the morphism that records the eventual algebraic translation length of an element on the ray $R_i$. Here, a positive translation length corresponds to moving the vertices towards the infinity. These morphisms turn out to encode the abelianisation of $H_n$:

\begin{lemma}\label{lem:abelianisation}
Let $n \geq 2$ be an integer. The morphism
$$\lambda_1 \oplus \cdots \oplus \lambda_{n-1} : H_n \to \mathbb{Z}^{n-1}$$
is the abelianisation map of $H_n$. In particular, $[H_n,H_n]= \mathfrak{S}_\infty$. 
\end{lemma}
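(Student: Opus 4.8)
The plan is to show that $\lambda := \lambda_1 \oplus \cdots \oplus \lambda_{n-1}$ is surjective with kernel exactly $\mathfrak{S}_\infty$, which — since $\mathbb{Z}^{n-1}$ is abelian — forces this to be the abelianisation map; the final claim $[H_n, H_n] = \mathfrak{S}_\infty$ is then immediate as the kernel of the abelianisation is the derived subgroup.

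First I would establish surjectivity. Using the transpositions-like elements $t_{i,j}$ from the introduction, note that $\lambda(t_{i,n})$ has $-1$ in the $i$th coordinate and $0$ in every other coordinate among the first $n-1$ (the $+1$ lands in the $n$th coordinate, which $\lambda$ forgets), for $1 \leq i \leq n-1$. These images form (up to sign) the standard basis of $\mathbb{Z}^{n-1}$, so $\lambda$ is onto.

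Next I would identify the kernel. One inclusion is clear: a finitely supported permutation eventually acts as the identity on each ray, hence has zero translation length on each $R_i$, so $\mathfrak{S}_\infty \subseteq \ker \lambda$. For the reverse inclusion, take $g \in H_n$ with $\lambda_i(g) = 0$ for all $1 \leq i \leq n-1$; since the translation lengths must sum to zero (as $g$ is a bijection of $\mathcal{R}_n$ — this is the relation $\lambda_1 + \cdots + \lambda_n = 0$ recalled in the introduction), we also get $\lambda_n(g) = 0$. Thus $g$ eventually acts as the identity translation on every ray, meaning there is $N$ such that $g$ fixes $(i,m)$ for all $i$ and all $m \geq N$; hence $g$ is supported on the finite set $\{(i,m) : 1 \leq i \leq n,\ m < N\}$ and lies in $\mathfrak{S}_\infty$. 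This gives $\ker \lambda = \mathfrak{S}_\infty$.

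Finally, since $H_n / \mathfrak{S}_\infty \cong \mathbb{Z}^{n-1}$ is abelian, the quotient map factors through the abelianisation; conversely, any morphism to an abelian group kills $\mathfrak{S}_\infty$ once one checks $\mathfrak{S}_\infty \subseteq [H_n, H_n]$, which holds because every transposition of $\mathcal{R}_n$ is a commutator in $H_n$ (for instance, a transposition swapping two points on a single ray can be written as a commutator of elements shifting and permuting nearby points — alternatively, $\mathfrak{S}_\infty$ is generated by $3$-cycles, each of which is visibly a commutator in the full symmetric group $\mathrm{Sym}(\mathcal{R}_n) \cap H_n$ acting on a finite set of $\geq 5$ points). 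Therefore $\lambda$ is the abelianisation map and $[H_n, H_n] = \ker \lambda = \mathfrak{S}_\infty$. I expect the only mildly delicate point to be the verification that $\mathfrak{S}_\infty \subseteq [H_n, H_n]$, i.e. that finitely supported permutations are products of commutators in $H_n$; everything else is a direct unwinding of the definitions.
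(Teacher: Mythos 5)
Your overall route --- compute $\ker(\lambda_1\oplus\cdots\oplus\lambda_{n-1})=\mathfrak{S}_\infty$, check surjectivity, and reduce the lemma to the inclusion $\mathfrak{S}_\infty\subseteq[H_n,H_n]$ --- is exactly the reduction made in the paper, and your kernel and surjectivity arguments are fine. The gap is precisely at the step you yourself flag as ``mildly delicate'', and neither of your proposed justifications works. The fallback claim that $\mathfrak{S}_\infty$ is generated by $3$-cycles is false: $3$-cycles generate only the finitary alternating group $\mathfrak{A}_\infty$, because the sign is a well-defined surjection $\mathfrak{S}_\infty\to\mathbb{Z}/2$ killing every $3$-cycle. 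Your first suggestion (a transposition of two nearby points of one ray as a commutator of elements ``shifting and permuting nearby points'') is only a sketch, and no soft argument of this kind can succeed uniformly in $n\geq 2$: for $n=2$ the inclusion actually fails. Indeed, the sign extends to a character of $H_2$, via $g\mapsto\mathrm{sign}\bigl(g\,t_{1,2}^{-\lambda_2(g)}\bigr)$ (the element $g\,t_{1,2}^{-\lambda_2(g)}$ is finitely supported, and multiplicativity follows from the conjugation-invariance of the sign), so $[H_2,H_2]\subseteq\mathfrak{A}_\infty\subsetneq\mathfrak{S}_\infty$ and $H_2^{\mathrm{ab}}\cong\mathbb{Z}\oplus\mathbb{Z}/2$. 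Consequently, any correct proof of $\mathfrak{S}_\infty\subseteq[H_n,H_n]$ must genuinely use a third ray, e.g.\ by exhibiting a transposition as a commutator built from elements $t_{i,k},t_{j,k}$ with $i,j,k$ pairwise distinct; this is not a formality one can wave through.

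This is also exactly where your attempt diverges from the paper: the paper does not reprove the inclusion but, after the same kernel computation, cites \cite[Lemma~2.3]{LeeHoughton} for the equality $[H_n,H_n]=\mathfrak{S}_\infty$. The $n=2$ subtlety above is worth keeping in mind when invoking that equality (in the rest of the paper it is only used for $n\geq 3$; for $n=2$ only statements about $H^1(\cdot,\mathbb{R})$ are needed, which are unaffected by the extra $\mathbb{Z}/2$), but as far as your proposal is concerned the concrete defect is that the claimed easy proofs of $\mathfrak{S}_\infty\subseteq[H_n,H_n]$ are respectively incorrect and incomplete.
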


\begin{proof}
For short, set $\lambda:= \lambda_1 \oplus \cdots \oplus \lambda_{n-1}$. We have
$$\mathrm{ker}(\lambda)= \bigcap\limits_{i=1}^{n-1} \mathrm{ker}(\lambda_i) = \bigcap\limits_{i=1}^n \mathrm{ker}(\lambda_i)= \mathfrak{S}_\infty.$$
Therefore, saying that $\lambda$ is the abelianisation map of $H_n$ amounts to saying that $[H_n,H_n]= \mathfrak{S}_\infty$. In order to justify this equality, see for instance \cite[Lemma~2.3]{LeeHoughton}. 
\end{proof}

\section{Second partial Houghton groups}

\noindent
We start our study of hyperbolic structures on Houghton groups by considering a specific family of subgroups, which we refer as \emph{partial Houghton groups}. 

\begin{definition}
Let $n \geq 1$ be an integer and $I \subset \{1, \ldots, n\}$ a set. The \emph{partial Houghton group} $H_{n}(I)$ is the subgroup of $H_n$ given by the transformations that fix pointwise a subray in $\{i\} \times \mathbb{N}$ for every $i \notin I$. 
\end{definition}

\noindent
It is worth noticing that, isomorphically, the subgroup $H_n(I) \leq H_n$ only depends on the cardinality of $I$. Indeed, the symmetric subgroup of $\mathrm{QAut}(\mathcal{R}_n)$ that permutes the rays of $\mathcal{R}_n$ normalises $H_n$, inducing an action on $H_n$ by automorphisms. As a consequence, if $I_1,I_2 \subset \{1, \ldots, n\}$ are two subsets of the same size, then there exists an automorphism of $H_n$ sending $H_n(I_1)$ to $H_n(I_2)$. Therefore, it makes sense to focus on the subgroups $H_n(r):= H_n(\{1,\ldots, r\})$ where $0 \leq r \leq n$.

\medskip \noindent
In this section, our goal is to describe the possible hyperbolic structure on the second partial Houghton subgroup $H_n(2) \leq H_n$. This will be the key in order to understand the hyperbolic structures of Houghton groups.  

\begin{thm}\label{thm:SecondPartialHoughton}
Let $n \geq 2$ be an integer. The partial Houghton group $H_n(2)$ has exactly two focal hyperbolic structures, namely $[\mathrm{Fix}(R_1) \cup \{ t_{1,2}\}]$ and $[\mathrm{Fix}(R_2) \cup \{ t_{1,2}^{-1}\}]$; they are not comparable and they both dominate the single lineal hyperbolic structure of $H_n(2)$, namely $[\mathfrak{S}_\infty \cup \{ t_{1,2}\}]$. 
\end{thm}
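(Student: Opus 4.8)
The key structural fact is that $H_n(2) \cong \mathfrak{S}_\infty \rtimes \mathbb{Z}$: the subgroup of transformations fixing subrays in $R_3, \ldots, R_n$ is finitely-supported permutations of $R_1 \cup R_2$ extended by the $\mathbb{Z}$ generated by $t_{1,2}$ (which eventually translates $R_1 \cup R_2$, viewed as a bi-infinite line, by one step). So the plan is to invoke the machinery of confining subsets / confining automorphisms from \cite{MR3420526}, exactly as was done for lamplighter groups in \cite{MR4069979} and for Baumslag--Solitar groups in the consecutive works. First I would set up the semidirect product description carefully, identifying $\mathfrak{S}_\infty$ with $\bigoplus_{\mathbb{Z}} (\text{finite group stuff})$ — actually $\mathfrak{S}_\infty$ here is the group of finitely supported permutations of $\mathbb{Z}$, and $t_{1,2}$ acts by the shift. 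Then the hyperbolic structures of $\mathfrak{S}_\infty \rtimes_\phi \mathbb{Z}$ are classified by which subsets of $\mathfrak{S}_\infty$ are confined by the shift automorphism $\phi$ (and by $\phi^{-1}$): a confining subset gives a focal action on a tree (Bass--Serre tree of an ascending HNN extension), and one reads off the poset from the lattice of such subsets.

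Concretely, the steps are: (1) identify $H_n(2) = \mathfrak{S}_\infty \rtimes \langle t_{1,2}\rangle$ and recall from \cite{MR3420526} the dictionary between cobounded hyperbolic actions with a fixed point at infinity and subsets of $\mathfrak{S}_\infty$ confined under $t_{1,2}$ or $t_{1,2}^{-1}$; (2) show that the only subsets of $\mathfrak{S}_\infty$ confined by the shift $\phi$ are, up to the relevant equivalence, the "trivial" one (giving the lineal structure $[\mathfrak{S}_\infty \cup \{t_{1,2}\}]$, since the whole of $\mathfrak{S}_\infty$ is confined) and the subset consisting of permutations supported on a half-line $\{k \geq 0\}$ of $R_1 \cup R_2$ — this is the one confined by $\phi$ (the shift pushes supports towards $+\infty$, eventually into the half-line) and corresponds to the focal structure $[\mathrm{Fix}(R_1) \cup \{t_{1,2}\}]$, because fixing a subray of $R_1$ is exactly being supported on such a half-line; (3) symmetrically, $\phi^{-1}$ confines the permutations supported on the opposite half-line, giving $[\mathrm{Fix}(R_2) \cup \{t_{1,2}^{-1}\}]$; (4) rule out general-type actions: since $[H_n(2), H_n(2)] = \mathfrak{S}_\infty$ (the abelianisation is $\mathbb{Z}$ via $\lambda_1$, using Lemma~\ref{lem:abelianisation} applied to the rank-one situation) and $\mathfrak{S}_\infty$ is locally finite hence has bounded orbits in any focal or general-type... — more precisely, Corollary~\ref{cor:BoundedOrbits} together with the structure of confining subsets forces any unbounded action to have a fixed point at infinity, so general type is impossible; (5) conclude that the only lineal hyperbolic structure is $[\mathfrak{S}_\infty \cup \{t_{1,2}\}]$, using Lemma~\ref{lem:LinealActions} and the fact that $H^1(H_n(2),\mathbb{R})$ is one-dimensional (abelianisation $\mathbb{Z}$), plus Lemma~\ref{lem:NonOrientedLineal} and the no-surjection-onto-$\mathbb{D}_\infty$ remark to discard the non-oriented case; (6) check the comparability claims: $\mathcal{F}_1$ and $\mathcal{F}_2$ are incomparable because $t_{1,2}$ is loxodromic in both but with "opposite orientation", while each dominates the lineal structure since enlarging the generating set from $\mathrm{Fix}(R_i) \cup \{t_{1,2}^{\pm}\}$ to $\mathfrak{S}_\infty \cup \{t_{1,2}\}$ is a domination (one adds the finitely-supported permutations, which are already quasi-bounded in the focal structure but their inclusion collapses the focal structure down to the lineal one).

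The main obstacle is step (2): showing that there are \emph{no other} confining subsets of $\mathfrak{S}_\infty$ under the shift — i.e., that the list of focal structures is complete. This is where one must genuinely use the combinatorics of finitely supported permutations of $\mathbb{Z}$ under translation, arguing that a subset $A \subseteq \mathfrak{S}_\infty$ satisfying the confining inequality $\phi^N(A) \cdot B \subseteq A$ for some finite $B$ must, because every element has finite support and $\phi$ is a shift, contain all permutations with support sufficiently far to the right (if confined by $\phi$) and be contained in the permutations supported on a half-line up to bounded error; the subtlety is handling the interaction between support \emph{location} and support \emph{size}, and verifying that the resulting equivalence classes of subsets number exactly two (one per direction) plus the degenerate total subset. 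I would organize this as a self-contained combinatorial lemma about confining subsets of $\bigoplus_{\mathbb{Z}} \mathbb{Z}/?$-type groups under the shift, of which several special cases already appear in \cite{MR4069979, MR4602410, MR4585997}, and cite those where the argument is parallel.
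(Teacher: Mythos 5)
Your overall strategy is the same as the paper's: decompose $H_n(2)$ as $\mathfrak{S}_\infty\rtimes\langle t_{1,2}\rangle$, invoke the confining-subset dictionary of \cite{MR3420526} for focal structures of $L\rtimes\mathbb{Z}$ with $L$ locally finite, check directly that $t_{1,2}$ (resp.\ $t_{1,2}^{-1}$) strictly confines $\mathfrak{S}_\infty$ into $\mathrm{Fix}(R_1)$ (resp.\ $\mathrm{Fix}(R_2)$), and get the unique lineal structure from $H^1(H_n(2),\mathbb{R})\cong\mathbb{R}$ together with the non-surjection onto $\mathbb{D}_\infty$. But the actual content of the theorem is the completeness of the list of focal structures, i.e.\ that every subset strictly confined by $\tau^{\pm1}$ defines the \emph{same} structure as $\mathrm{Fix}(R_1)$ or $\mathrm{Fix}(R_2)$, and this is precisely the step you label ``the main obstacle'' and do not prove. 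Your sketch of it has two genuine problems. First, you misidentify the kernel: for $n\geq 3$ the elements of $H_n(2)$ killed by $\lambda_1$ are the finitely supported permutations of \emph{all} of $\mathcal{R}_n$, not of $R_1\cup R_2\cong\mathbb{Z}$, and $\mathrm{Fix}(R_1)$ contains permutations moving points of $R_3,\dots,R_n$, which $t_{1,2}$ does not translate; so ``confined $=$ supported on a half-line up to bounded error'' is already false as stated, and a correct proof must push supports out of the extra rays (the paper does this repeatedly, and its endgame uses the factorisation $\mathfrak{S}_\infty=\mathrm{Fix}(R_1)\,\mathrm{Sym}(R_1\cup R_2)\,\mathrm{Fix}(R_1)$). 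Second, the appeal to the lamplighter/Baumslag--Solitar classifications is not a parallel argument one can cite: there the kernel is a restricted direct product $\bigoplus_{\mathbb{Z}}F$ on which the shift acts coordinatewise, whereas $\mathfrak{S}_\infty$ is not of this form and supports of products do not interact additively. The paper's proof of the corresponding statement is a long bootstrap: first one enlarges an arbitrary confining $Q$ (without changing the structure) until it contains $\mathrm{Fix}(R_1)$; then, assuming $[\mathrm{Fix}(R_1)\cup\{t\}]<[Q\cup\{t\}]$, one uses elements of $Q$ whose supports reach arbitrarily far into $R_1$ to manufacture, through a chain of explicit constructions (doubling tricks to control where $-n$ and its preimage land, order adjustments, commuting pairs), actual transpositions $(-n,-n+1)$ inside $Q$, whence $Q=\mathfrak{S}_\infty$ and confinement is not strict. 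None of this is supplied by, or reducible to, the cited special cases, so as it stands the proposal proves only the easy inclusion of the theorem.

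A smaller point: incomparability of the two focal structures does not follow just from $t_{1,2}$ being loxodromic ``with opposite orientation''; the paper proves it by computing $\|(-n,-n+1)\|_{\mathrm{Fix}(R_1)\cup\{t\}}\to\infty$ while these transpositions lie in $\mathrm{Fix}(R_2)$, and symmetrically. This is easy to repair, but it is another place where your outline substitutes a heuristic for an argument.
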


\noindent
In Section~\ref{section:confining}, we start by stating basic definitions and results related to \emph{confining automorphisms}. In Section~\ref{section:PartialFocal}, we describe the possible focal hyperbolic structures on second partial Houghton subgroups, and we conclude the proof of Theorem~\ref{thm:SecondPartialHoughton} in Section~\ref{section:PartialHyp}.

\subsection{Confining subsets}\label{section:confining}

\noindent
Following \cite{MR3420526}, we will use \emph{confining automorphisms} in order to describe focal hyperbolic structures. Recall that:

\begin{definition}
Let $G$ be a group, $Q \subset G$ a subset. An automorphism $\alpha \in \mathrm{Aut}(G)$ is (\emph{strictly}) \emph{confining $G$ into $Q$} if the following conditions are satisfied:
\begin{itemize}
	\item $\alpha(Q)$ is (strictly) contained in $Q$;
	\item $G= \bigcup_{n \geq 0} \alpha^{-n}(Q)$;
	\item there exists $n_0 \geq 0$ such that $\alpha^{n_0}(Q^2) \subset Q$. 
\end{itemize}
\end{definition}

\noindent
The typical example to keep in mind is the following: Let $G$ be a group acting on a tree $T$ with a global fixed point $\xi \in \partial T$ at infinity. Every element of $G$ reduces to a translation on some subray pointing to $\xi$. Let $\lambda : G \to \mathbb{Z}$ denote the morphism that records this eventual algebraic translation length. Assuming that $G$ contains a loxodromic element, $\lambda$ is non-trivial and can be used to decompose $G$ as a semidirect product $\mathrm{ker}(\lambda) \rtimes \mathbb{Z}$. Let $g \in G$ be a generator of the $\mathbb{Z}$-factor. It is necessarily a loxodromic element, so it admits an axis $\gamma$. Fix a subray $\gamma^- \subset \gamma$ that does not point to $\xi$. Up to replacing $g$ with its inverse, assume that $g$ translates the vertices of $\gamma$ towards $\xi$. Then $g$ strictly confines $\mathrm{ker}(\lambda)$ into $\mathrm{Fix}(\gamma^-)$. 

\medskip \noindent
This example generalises to arbitrary hyperbolic spaces, and can be used to described arbitrary regular focal action. More precisely, we have the following statement, which follows from \cite[Theorem~4.1 and Proposition~4.6]{MR3420526}:

\begin{prop}\label{prop:ConfiningToHyp}
Let $G$ be a group and $\alpha \in \mathrm{Aut}(G)$ an automorphism. If $\alpha$ is strictly confining $G$ into some $Q \subset G$, then $Q \cup \{\alpha\}$ defines a regular focal hyperbolic structure for $G \rtimes_\alpha \mathbb{Z}$. Moreover, if $\beta$ denotes the corresponding Busemann morphism, then $\beta(\alpha)>0$. \\ Conversely, assume that $\mathfrak{X}$ is a regular focal hyperbolic structure of a group $H$. There exist an element $h \in H$ and a subset $Q \subset [H,H]$ such that:
\begin{itemize}
	\item $h$ has infinite order in $H/ [H,H]$;
	\item the subgroup $[H,H] \rtimes \langle h \rangle$ is cobounded in $(H,\mathfrak{X})$;
	\item and $h$ is strictly confining $[H,H]$ into $Q$.
\end{itemize} 
\end{prop}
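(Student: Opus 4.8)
\textbf{Proof proposal for Proposition~\ref{prop:ConfiningToHyp}.}

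The plan is to treat the two directions separately and to import as much as possible from \cite{MR3420526}, which already contains the correspondence between confining actions of semidirect products by $\mathbb{Z}$ and focal actions on hyperbolic spaces. For the first (easy) direction, suppose $\alpha$ strictly confines $G$ into $Q$. I would invoke \cite[Theorem~4.1]{MR3420526} directly: the data of a strictly confining automorphism produces a cobounded focal action of $G \rtimes_\alpha \mathbb{Z}$ on a hyperbolic space (a ``horoball-like'' graph built from the confining filtration $\{\alpha^{-n}(Q)\}$), and the generating set $Q \cup \{\alpha\}$ realises this structure. I would then need to check two things: that the action is \emph{regular}, i.e.\ that the Busemann quasimorphism homogenises to a genuine morphism, and that $\beta(\alpha) > 0$. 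Regularity should follow because $G \rtimes_\alpha \mathbb{Z}$ surjects onto $\mathbb{Z}$ via the retraction killing $G$, and the Busemann quasimorphism is (up to bounded error) this retraction precomposed with nothing — more precisely, $Q$ has bounded orbit in the constructed space, so the Busemann quasimorphism vanishes on $G$ up to a constant and is proportional to the $\mathbb{Z}$-coordinate; its homogenisation is therefore the projection $G\rtimes_\alpha \mathbb{Z} \to \mathbb{Z}$, a morphism, and $\alpha$ (the positive generator) maps to a positive value, whence $\beta(\alpha) > 0$. This is essentially \cite[Proposition~4.6]{MR3420526} and only needs to be quoted carefully.

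For the converse, let $\mathfrak{X}$ be a regular focal hyperbolic structure of $H$, with Busemann morphism $\beta : H \to \mathbb{R}$. Since $\beta$ is a non-trivial morphism to $\mathbb{R}$, it kills $[H,H]$, so $[H,H] \subset \ker(\beta)$, and the loxodromic elements are exactly those with $\beta \neq 0$. I would pick $h \in H$ loxodromic with $\beta(h) > 0$; since $\beta(h)\neq 0$, $h$ has infinite order in $H/[H,H]$, giving the first bullet. For coboundedness of $[H,H] \rtimes \langle h \rangle$: the point is that $\mathfrak{X}$ being focal and regular forces the ``elliptic part'' together with one loxodromic to generate a cobounded subgroup. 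Here I would argue that $H = [H,H]\cdot\langle h\rangle \cdot E$ where $E$ is the set of elements with bounded orbit; but more cleanly, one invokes the converse half of \cite[Theorem~4.1]{MR3420526}: a regular focal action of $H$ restricts, on the subgroup $\ker(\beta|_{\text{loxodromic directions}})\rtimes\langle h\rangle$, to a confining action, and this subgroup is cobounded because all of $H$ acts on a space quasi-isometric to the horoball over $\ker(\beta)\rtimes\langle h\rangle$. Then $h$ strictly confines $[H,H]$ into $Q := \{g \in [H,H] : d(o, go) \leq R\}$ for suitable $R$, the three confining axioms coming respectively from $\beta(h)>0$ shrinking the horoball-radius, from coboundedness/the union $\bigcup_n h^{-n}Q = [H,H]$ (every element of $[H,H]$ has some bounded orbit, pushed into $Q$ by enough powers of $h$), and from hyperbolicity giving the $Q^2 \subset h^{-n_0}(Q)$ condition via a thin-triangles estimate.

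The main obstacle I anticipate is the converse direction's middle bullet: showing $[H,H]\rtimes\langle h\rangle$ is genuinely \emph{cobounded} in $(H,\mathfrak{X})$, rather than merely quasiconvex or acting with the same limit set. The subtlety is that $H$ might be much larger than $[H,H]\rtimes\langle h\rangle$ (there could be many elliptic elements outside $[H,H]$, and $H/[H,H]$ might have rank $>1$ even though only a rank-one quotient is ``seen'' by $\mathfrak{X}$). The resolution should be: the kernel $\ker\beta$ acts with bounded orbits is \emph{false} in general — rather $\ker\beta$ can act with unbounded (horocyclic-type) orbits — so one cannot simply take $\ker\beta\rtimes\langle h\rangle$. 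Instead one must take $Q \subset [H,H]$, and the coboundedness of $[H,H]\rtimes\langle h\rangle$ relies on the specific structure extracted in \cite[Theorem~4.1, Proposition~4.6]{MR3420526}, where the hyperbolic space is shown to be quasi-isometric to a ``cone-off'' / horoball construction over precisely this subgroup. I would therefore spend the bulk of the argument carefully matching our hypotheses to the hypotheses of those two cited results and verifying that ``regular focal'' is exactly the input they require, rather than re-deriving the geometry from scratch.
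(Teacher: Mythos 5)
Your proposal is correct and follows essentially the same route as the paper, which justifies Proposition~\ref{prop:ConfiningToHyp} simply by quoting \cite[Theorem~4.1 and Proposition~4.6]{MR3420526} and matching hypotheses, exactly as you do. The coboundedness of $[H,H] \rtimes \langle h \rangle$ that you single out as the main obstacle is itself a ready-made result, \cite[Proposition~4.5]{MR3420526}, which the paper records separately as Lemma~\ref{lemma:Cobounded}, so nothing there needs to be re-derived from the geometry.
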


\noindent
It is not difficult to deduce the following statement (which, formally, is also a consequence of the generalisation \cite[Theorem~1.2]{MR4585997}):

\begin{cor}\label{cor:FocalDes}
Let $L \rtimes \mathbb{Z}$ be a semidirect product with $L$ locally finite. Fix a generator $t$ of the $\mathbb{Z}$-factors. The focal hyperbolic structures of $L \rtimes \mathbb{Z}$ are 
$$[Q \cup \{t\}] \text{ (resp.\ $[Q \cup \{t^{-1}\}]$)}$$
where $t$ (resp.\ $t^{-1}$) is strictly confining $L$ into $Q$. 
\end{cor}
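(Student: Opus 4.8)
The plan is to derive this corollary from Proposition~\ref{prop:ConfiningToHyp} together with Lemma~\ref{lem:abelianisation}-style facts about the abelianisation of $L \rtimes \mathbb{Z}$, being careful about the two subtleties that distinguish the corollary from the proposition: first, the proposition speaks of confining the \emph{commutator subgroup} $[H,H]$, whereas the corollary speaks of confining the locally finite normal factor $L$; second, the proposition gives a cobounded subgroup $[H,H] \rtimes \langle h \rangle$ rather than all of $H$.

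First I would check the \emph{if} direction. Suppose $t$ strictly confines $L$ into some $Q \subset L$. Since $L$ is locally finite and $L \rtimes \mathbb{Z}$ is the whole group, Proposition~\ref{prop:ConfiningToHyp} (first half) applies directly with $G = L$ and $\alpha$ conjugation by $t$: it gives that $Q \cup \{t\}$ defines a regular focal hyperbolic structure on $(L \rtimes \mathbb{Z})$, with Busemann morphism $\beta$ satisfying $\beta(t) > 0$. Replacing $t$ by $t^{-1}$ handles the symmetric case, giving the focal structures $[Q \cup \{t^{-1}\}]$ for those $Q$ into which $t^{-1}$ is strictly confining. So these are indeed focal hyperbolic structures.

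For the \emph{only if} direction, let $\mathfrak{X}$ be a focal hyperbolic structure of $L \rtimes \mathbb{Z}$. The main obstacle — and the step I would spend the most care on — is to upgrade from an arbitrary focal structure to a \emph{regular} one and to identify the distinguished element as $t^{\pm 1}$. Since $L$ is locally finite it is amenable, hence $L \rtimes \mathbb{Z}$ is amenable and therefore agreeable; combined with Lemma~\ref{lem:NonOrientedLineal} and the fact that $L \rtimes \mathbb{Z}$ (whose abelianisation is virtually $\mathbb{Z}$, with $L$ locally finite so $[L\rtimes\mathbb{Z}, L\rtimes\mathbb{Z}] \subseteq L$) does not surject onto $\mathbb{D}_\infty$, one knows that the relevant lineal/focal actions are oriented, and standard results (as invoked around \cite[Theorem~4.1]{MR3420526}) ensure a focal action of an amenable group is automatically regular. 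Now apply the converse half of Proposition~\ref{prop:ConfiningToHyp}: there are $h \in L \rtimes \mathbb{Z}$ of infinite order in the abelianisation, a subset $Q' \subset [L\rtimes\mathbb{Z}, L\rtimes\mathbb{Z}] \subseteq L$, with $[L\rtimes\mathbb{Z}, L\rtimes\mathbb{Z}] \rtimes \langle h\rangle$ cobounded in the structure and $h$ strictly confining $[L\rtimes\mathbb{Z}, L\rtimes\mathbb{Z}]$ into $Q'$.

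It remains to replace $h$ by $t^{\pm 1}$ and $[L\rtimes\mathbb{Z}, L\rtimes\mathbb{Z}]$ by $L$. Since $h$ has infinite order in the abelianisation and $L \rtimes \mathbb{Z}$ has abelianisation with free part of rank one generated by the image of $t$, we have $h = \ell t^{m}$ for some $\ell \in L$ and $m \neq 0$; up to replacing $t$ by $t^{-1}$ assume $m \geq 1$. One checks that conjugation by $h$ and conjugation by $t^m$ differ by an inner automorphism of the locally finite group $L$, so a confining set for one can be enlarged to a confining set for the other within $L$ (using local finiteness to absorb the finite discrepancy, and the union condition $L = \bigcup_n \alpha^{-n}(Q)$ to pass between $t^m$ and $t$). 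Similarly, since $L/[L\rtimes\mathbb{Z},L\rtimes\mathbb{Z}]$ is locally finite, enlarging $Q'$ to $Q := \bigcup_{\ell \in F} \ell Q'$ for a suitable finite $F$ — or more simply taking $Q$ to be the union of $Q'$ with a large enough finite subset of $L$ and re-running the three confining axioms — yields a subset $Q \subset L$ with $t$ strictly confining $L$ into $Q$ and $[Q \cup \{t\}] = \mathfrak{X}$; the coboundedness of $L \rtimes \langle t \rangle = L \rtimes \mathbb{Z}$ is automatic here. Finally, the observation that distinct confining data give the structures listed, with no repetitions beyond the stated form, follows from the equivariant bijection with characters (Lemma~\ref{lem:LinealActions}) applied to the dominated lineal structure, since a focal structure on $L \rtimes \mathbb{Z}$ dominates exactly the lineal structure $[\,[L\rtimes\mathbb{Z},L\rtimes\mathbb{Z}] \cup \{t\}\,]$ corresponding to the unique (up to scaling) character, which pins down the sign $\pm$.
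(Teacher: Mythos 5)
Your skeleton for the converse direction (amenability forces the focal action to be regular; the Busemann morphism vanishes on the torsion group $L$, so the distinguished element has the form $\ell t^{m}$ with $m\neq 0$; then invoke the second half of Proposition~\ref{prop:ConfiningToHyp}) starts in the right place, and the forward direction is fine. But the two substitutions you yourself identify as the crux --- replacing $[H,H]$ by $L$ and $h=\ell t^m$ by $t^{\pm1}$, where $H:=L\rtimes\mathbb{Z}$ --- are exactly where the write-up has genuine gaps. You propose to enlarge $Q'$ by finitely many translates ``since $L/[H,H]$ is locally finite'', but locally finite does not mean finite, and $[H,H]$ may have infinite index in $L$: take $L=S\times\bigoplus_{\mathbb{Z}}\mathbb{Z}/2\mathbb{Z}$ with $S$ an infinite locally finite abelian group, $t$ acting trivially on $S$ and by the shift on the second factor; then $t$ strictly confines $L$ into $Q=S\times\{\text{vectors supported on }[0,\infty)\}$, so focal structures exist, while $[H,H]\cap L=\{1\}\times\{\text{sum-zero vectors}\}$ has infinite index. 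In such a situation no enlargement of $Q'$ by a finite set (or finitely many cosets) can ever satisfy $L=\bigcup_{n\geq0}\alpha^{-n}(Q)$; what is actually needed is to adjoin an \emph{infinite} set that one must prove is bounded for the structure in question, which is precisely the kind of work carried out in the last paragraph of the proof of Lemma~\ref{lem:Cofining} and in Claim~\ref{claim:Tbounded}, and which your sketch omits. (Relatedly, ``conjugation by $h$ and by $t^m$ differ by an inner automorphism'' only controls one step: the $k$-th iterates differ by conjugation by $\ell\,\tau^{m}(\ell)\cdots\tau^{(k-1)m}(\ell)$, so the discrepancy is not a single bounded perturbation and must be absorbed via the enlargement machinery of Lemmas~\ref{lem:BoundedIncreasing} and~\ref{lem:IncreaseingSubset}, not by ``local finiteness''.)

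The second, more fundamental gap is that even once some $Q$ with $t$ strictly confining $L$ into $Q$ has been produced, you never prove the equality $[Q\cup\{t\}]=\mathfrak{X}$, which is the actual content of the corollary. Proposition~\ref{prop:ConfiningToHyp}, as stated, only asserts that $[H,H]\rtimes\langle h\rangle$ is cobounded in $(H,\mathfrak{X})$; it does not identify the induced structure on that subgroup with $[Q'\cup\{h\}]$, and your closing appeal to Lemma~\ref{lem:LinealActions} cannot do this job: that lemma classifies \emph{lineal} structures, and knowing which lineal structure a focal structure dominates does not determine the focal structure (otherwise Theorem~\ref{thm:FocalTwo} would be immediate, whereas it occupies all of Section~3.2). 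To close the argument you must either use the stronger form of the Caprace--Cornulier--Monod--Tessera correspondence applied directly to the decomposition $L\rtimes\mathbb{Z}$ (equivalently \cite[Theorem~1.2]{MR4585997}), where $Q$ is constructed as the set of elements of $L$ displacing a basepoint a bounded amount so that the identification $[Q\cup\{t\}]=\mathfrak{X}$ comes with the construction, or else run a coboundedness-plus-Lipschitz-retraction argument of the type used in the proof of Proposition~\ref{prop:FocalGeneral}.
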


\medskip \noindent
Of course, given a semidirect product $H \rtimes_\alpha \mathbb{Z}$, there exists many $Q \subset H$ such that $\alpha$ is strictly confining $H$ into $Q$ and such that $[Q \cup \{ \alpha \}]$ always defines the same hyperbolic structure. In the next two lemmas, we show how we can enlarge this subset $Q$ without modifying the hyperbolic structure. 

\begin{lemma}\label{lem:BoundedIncreasing}
Let $G$ be a group and $\alpha \in \mathrm{Aut}(G)$ an automorphism. Assume that $\alpha$ is confining $G$ into some $Q,S \subset G$. If $S \subset G$ is bounded relative to $\|\cdot\|_Q$, then:
\begin{itemize}
	\item $\alpha$ is also confining $G$ into $Q \cup S$;
	\item and $[Q \cup \{\alpha\}] = [Q \cup S \cup \{\alpha\}]$ for $G \rtimes_\alpha \mathbb{Z}$. 
\end{itemize}
\end{lemma}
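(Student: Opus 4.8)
The plan is to verify the three defining conditions for "$\alpha$ confining $G$ into $Q \cup S$", and then to show the two hyperbolic structures coincide by exhibiting mutual domination. Write $k$ for a bound such that $\|s\|_Q \leq k$ for every $s \in S$; such a $k$ exists by hypothesis. For the first bullet, the inclusion $\alpha(Q \cup S) \subset Q \cup S$ is immediate from $\alpha(Q) \subset Q$ and $\alpha(S) \subset S$. The union condition $G = \bigcup_{n \geq 0} \alpha^{-n}(Q \cup S)$ follows at once from $G = \bigcup_{n \geq 0}\alpha^{-n}(Q)$. The only point needing care is the quadratic-contraction condition: we must produce $m_0$ with $\alpha^{m_0}\big((Q \cup S)^2\big) \subset Q \cup S$. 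The idea is that an element of $(Q \cup S)^2$ has the form $q_1 q_2$ with each $q_i$ a product of at most $k+1$ elements of $Q$ (a single $Q$-letter, or an $S$-letter rewritten as $\leq k$ letters of $Q$), hence lies in $Q^{2(k+1)}$. Now if $\alpha^{n_0}(Q^2) \subset Q$, then iterating gives $\alpha^{\ell n_0}(Q^{2^{\ell}}) \subset Q$ for every $\ell \geq 1$; choosing $\ell$ with $2^{\ell} \geq 2(k+1)$ and noting $Q \subset Q^{2(k+1)}$ is not quite immediate, so instead I would pad: since $e \in Q$ (as $\alpha(Q)\subset Q$ and $G=\bigcup\alpha^{-n}(Q)$ together with the group structure force $e\in Q$ after replacing $Q$ by $Q\cup\{e\}$, which does not change anything), we have $Q^{2(k+1)} \subset Q^{2^\ell}$, and therefore $\alpha^{\ell n_0}\big((Q\cup S)^2\big) \subset \alpha^{\ell n_0}(Q^{2^\ell}) \subset Q \subset Q \cup S$. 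Taking $m_0 = \ell n_0$ settles the first bullet.

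For the second bullet, by Proposition~\ref{prop:ConfiningToHyp} (applied to $G \rtimes_\alpha \mathbb{Z}$) both $Q \cup \{\alpha\}$ and $(Q \cup S) \cup \{\alpha\}$ are generating sets giving focal hyperbolic structures, so it suffices to show $[Q \cup \{\alpha\}] = [(Q \cup S) \cup \{\alpha\}]$, i.e. mutual $\preceq$. The inequality $Q \cup \{\alpha\} \preceq (Q\cup S)\cup\{\alpha\}$ is trivial since the left set is contained in the right one. For the reverse, every element of $(Q\cup S)\cup\{\alpha\}$ has $\|\cdot\|_{Q \cup \{\alpha\}}$-length at most $\max(1,k)$: elements of $Q$ and $\alpha$ have length $1$, and each $s \in S$ has $\|s\|_Q \leq k$ hence $\|s\|_{Q \cup \{\alpha\}} \leq k$. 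Therefore $(Q\cup S)\cup\{\alpha\} \preceq Q \cup\{\alpha\}$, and the two hyperbolic structures agree.

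The main obstacle is purely bookkeeping: making the quadratic-contraction estimate precise, in particular handling the fact that the "$n_0$" in the definition contracts $Q^2$ to $Q$ rather than $Q^{2(k+1)}$ to $Q$, which forces the iteration argument above and a small remark that one may harmlessly assume $e \in Q$. Everything else is formal. One could alternatively avoid iterating by observing that $S \subset Q^k$ is itself a bounded subset and that "$\alpha$ confining $G$ into $Q$" already implies $\alpha$ confines $G$ into $Q^N$ for every fixed $N$ (with $n_0$ replaced by a suitable multiple), then applying that to $N = 2(k+1)$; I would state whichever of these is shortest to write out cleanly.
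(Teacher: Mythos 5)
Your proof is correct and follows essentially the same route as the paper: the only nontrivial point is the quadratic-contraction condition, which both arguments settle by rewriting $S$-letters as $Q$-words of length at most $k$ and iterating the contraction $\alpha^{n_0}(Q^2)\subset Q$ (you use the dyadic iterate $\alpha^{\ell n_0}(Q^{2^{\ell}})\subset Q$ together with padding by the identity, while the paper splits $(Q\cup S)^2$ into $Q^2$, $S^2$, $QS$, $SQ$ and uses the linear iterate $\alpha^{(k-1)n_0}(Q^k)\subset Q$), the second bullet being immediate from the boundedness of $S$ in $\|\cdot\|_Q$, exactly as the paper treats it. Your hedge about assuming $e\in Q$ is unnecessary: the condition $G=\bigcup_{n\geq 0}\alpha^{-n}(Q)$ applied to the identity gives $e=\alpha^{n}(e)\in Q$ automatically, so the padding step is legitimate as written.
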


\begin{proof}
The second item is clear. In order to prove the first item, fix an integer $n_0 \geq 0$ such that $\alpha^{n_0}(Q^2) \subset Q$. Of course, we have
$$\alpha(Q \cup S) = \alpha(Q) \cup \alpha(S) \subset Q \subset S$$
and 
$$\bigcup\limits_{n \geq 0} \alpha^{-n} (Q \cup S ) = \bigcup\limits_{n \geq 0 } \alpha^{-n}(Q) \cup \bigcup\limits_{n \geq 0} \alpha^{-n}(S)= G.$$
Since $\alpha^n(Q) \subset Q$ and $\alpha^n(S) \subset S$ for every $n \geq n_0$, it remains to verify that $\alpha^{Nn_0}(QS)$ and $\alpha^{Nn_0}(SQ)$ are both contained in $Q$, where $N$ denotes the diameter of $S$ relative to $\|\cdot\|_Q$, in order to conclude that $\alpha$ is confining $G$ into $Q \cup S$. But it follows from Fact~\ref{fact:Bounded} below that
$$\alpha^{Nn_0}(qs) = \alpha^{n_0} \left( \alpha^{(N-1)n_0}(q) \alpha^{(N-1)n_0}(s) \right) \in \alpha^{n_0} \left( \alpha^{(N-1)n_0}(Q^N) Q \right) \subset \alpha^{n_0}(Q^2) \subset Q$$
for all $q \in Q$ and $s \in S$. The inclusion $\alpha^{Nn_0}(SQ) \subset Q$ is obtained similarly. 

\begin{fact}\label{fact:Bounded}
For every $k \geq 1$, $\alpha^{(k-1)n_0}(Q^k) \subset Q$.
\end{fact}

\noindent
We argue by induction over $k$. For $k \leq 2$, the inclusion is clear. Then, assuming that $k > 2$, we have
$$\alpha^{kn_0}(Q^{k+1}) = \alpha^{n_0} \left( \alpha^{(k-1)n_0} (Q^k) \alpha^{(k-1)n_0}(Q) \right) \subset \alpha^{n_0} \left( Q^2 \right) \subset Q,$$
where the first inclusion is justified by our induction hypothesis.
\end{proof}

\begin{lemma}\label{lem:IncreaseingSubset}
Let $G$ be a group, $Q \subset G$ a subset, and $\alpha \in \mathrm{Aut}(G)$ an automorphism (strictly) confining $G$ into $Q$. Fix an $n_0 \geq 0$ such that $\alpha^{n_0}(Q^2) \subset Q$. Let $f_1,f_2, \ldots \in G$ be elements for which there exists $K \geq 1$ such that $\alpha^K(f_i) \in Q$ for every $i \geq 1$. Define
\begin{itemize}
	\item $P_0:= \{ \alpha^i(f_j) \mid i \geq 0, j \geq 1 \} \cup Q$;
	\item and $P_n:= \alpha^{n_0}(P_{n-1}^2) \cup P_{n-1}$ for every $n \geq 1$.
\end{itemize}
Then $\overline{Q}:= \bigcup_{n \geq 0} P_n$ satisfies the following assertions:
\begin{itemize}
	\item $\alpha$ is (strictly) confining $G$ into $\overline{Q}$;
	\item $\alpha^{n_0}( \overline{Q}^2) \subset \overline{Q}$;
	\item $Q \cup \{\alpha\}$ and $\overline{Q} \cup \{\alpha\}$ define the same hyperbolic structure of $G \rtimes_\alpha \mathbb{Z}$. 
\end{itemize}
\end{lemma}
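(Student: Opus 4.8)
The plan is to verify the three assertions in order, reducing everything to the definition of ``confining'' together with Fact~\ref{fact:Bounded} from the previous lemma (which holds verbatim here since $n_0$ is chosen with the same property $\alpha^{n_0}(Q^2)\subset Q$). The key structural point to isolate first is that the nested sets $P_n$ are designed precisely so that $\alpha^{n_0}(P_{n-1}^2)\subset P_n\subset\overline{Q}$, which immediately gives the second bullet: for $q_1,q_2\in\overline{Q}$ there is an $n$ with $q_1,q_2\in P_{n}$, whence $\alpha^{n_0}(q_1q_2)\in\alpha^{n_0}(P_n^2)\subset P_{n+1}\subset\overline{Q}$. So $\alpha^{n_0}(\overline{Q}^2)\subset\overline{Q}$.

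For the first bullet, I would check the three defining conditions of a confining automorphism. The covering condition $G=\bigcup_{m\geq0}\alpha^{-m}(\overline{Q})$ is immediate from $Q\subset\overline{Q}$ and $G=\bigcup_{m\geq0}\alpha^{-m}(Q)$. The third condition is exactly the second bullet just established. The only condition requiring a small argument is $\alpha(\overline{Q})\subset\overline{Q}$ (strictly, if $\alpha$ was strictly confining): one shows $\alpha(P_n)\subset P_{n+1}$ by induction on $n$. For $P_0$, note $\alpha(\{\alpha^i(f_j)\mid i\geq0,j\geq1\})\subset\{\alpha^i(f_j)\mid i\geq1,j\geq1\}\subset P_0$ and $\alpha(Q)\subset Q\subset P_0$, so $\alpha(P_0)\subset P_0\subset P_1$. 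For the inductive step, $\alpha(P_n)=\alpha(\alpha^{n_0}(P_{n-1}^2))\cup\alpha(P_{n-1})\subset\alpha^{n_0}((\alpha(P_{n-1}))^2)\cup P_n\subset\alpha^{n_0}(P_n^2)\cup P_n=P_{n+1}$, using the induction hypothesis $\alpha(P_{n-1})\subset P_n$. Hence $\alpha(\overline{Q})\subset\overline{Q}$; strictness transfers because $\alpha(Q)\subsetneq Q$ already witnesses a point of $\overline{Q}$ not in $\alpha(\overline{Q})$ once one checks the new points $\alpha^i(f_j)$ do not spoil this — more carefully, strictness of a confining subset only requires $\alpha(\overline{Q})\subsetneq\overline{Q}$, and I expect this to follow from the hypothesis $\alpha(Q)\subsetneq Q$ by a direct inspection, though this is the point I would treat most carefully.

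For the third bullet, I would show the two inclusions of hyperbolic structures. One direction, $[Q\cup\{\alpha\}]\preceq[\overline{Q}\cup\{\alpha\}]$, is trivial since $Q\subset\overline{Q}$. For the reverse, by Lemma~\ref{lem:BoundedIncreasing} it suffices to show that $\overline{Q}$ is bounded with respect to $\|\cdot\|_{Q\cup\{\alpha\}}$ in $G\rtimes_\alpha\mathbb{Z}$ (then $\alpha$ is confining $G$ into both $Q$ and $\overline{Q}$, and the lemma gives $[Q\cup\{\alpha\}]=[Q\cup\overline{Q}\cup\{\alpha\}]=[\overline{Q}\cup\{\alpha\}]$). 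To bound $\overline{Q}$: first, each generator $\alpha^i(f_j)=\alpha^i f_j\alpha^{-i}$ satisfies $\|\alpha^i(f_j)\|\leq 2i+\|f_j\|$, but more usefully, since $\alpha^K(f_j)\in Q$ we have $\alpha^{i}(f_j)=\alpha^{i-K}(\alpha^K(f_j))\in\alpha^{i-K}(Q)$ for $i\geq K$, and by the confining property $\alpha^{-1}(Q)\supset Q$ so $\alpha^{i-K}(Q)$ needs care — instead write $\alpha^i(f_j)=\alpha^{i}f_j\alpha^{-i}$ as a word of length $\leq 2i+\|f_j\|_Q$ in $Q\cup\{\alpha\}$; combined with the fact that for $i\geq K$, $\alpha^i(f_j)\in\alpha^{i-K}(Q)$ whose elements have bounded $\|\cdot\|_{Q\cup\{\alpha\}}$-norm (at most $2(i-K)+1$)... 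The cleaner route: observe $P_0\setminus Q=\{\alpha^i(f_j):0\le i<K\}\cup\{\alpha^i(f_j):i\ge K,\ \alpha^i(f_j)\in\alpha^{i-K}(Q)\}$, and since there are only finitely many $j$ with nontrivial contribution... Actually the $f_j$ may be infinitely many, so I would instead bound $\|\alpha^i(f_j)\|_{Q\cup\{\alpha\}}$ uniformly: $\alpha^i(f_j)\in\alpha^{i-K}(Q)\cdot(\text{stuff})$—hmm. The honest statement is that $\alpha^i(f_j)=\alpha^{i-K}(\alpha^K(f_j))$ with $\alpha^K(f_j)\in Q$, so $\|\alpha^i(f_j)\|_{Q\cup\{\alpha\}}\le\|\alpha^{i-K}(q)\|_{Q\cup\{\alpha\}}\le 2|i-K|+1$ only helps for $i$ close to $K$; for large $i$ one uses $\alpha^{i-K}(q)\in Q$ when $i\ge K$ (since $\alpha(Q)\subset Q$), giving $\|\alpha^i(f_j)\|_{Q\cup\{\alpha\}}\le 1$, and for $i<K$ one has $\|\alpha^i(f_j)\|_{Q\cup\{\alpha\}}\le i+\|f_j\|_{\{\alpha^K(f_j)\}\cup\{\alpha\}}\le K+2K+1$. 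Thus $P_0$ has diameter $\le 4K+2$ in $\|\cdot\|_{Q\cup\{\alpha\}}$, say bounded by some $M$. Then an easy induction using Fact~\ref{fact:Bounded}-type reasoning shows each $P_n$, hence $\overline{Q}$, has $\|\cdot\|_{Q\cup\{\alpha\}}$-diameter bounded: if $P_{n-1}$ has diameter $\le D$, then $\alpha^{n_0}(P_{n-1}^2)$ consists of elements of the form $\alpha^{n_0}(p_1p_2)$ with $\|p_1p_2\|_{Q\cup\{\alpha\}}\le 2D$, but conjugating by $\alpha^{n_0}$ could increase norms — this is where one genuinely needs the confining property to control $\alpha^{n_0}$ on products, and in fact $\alpha^{n_0}(P_{n-1}^2)\subset P_n\subset\overline{Q}$ by construction, so one should instead bound $\|\overline{Q}\|$ directly via $\alpha$-invariance: for $q\in\overline{Q}$, $\alpha^m(q)\in\overline{Q}$ for all $m\ge0$ and $\alpha^{-m}(q)$ eventually enters $P_0$... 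The main obstacle, and the step I would spend the most effort on, is precisely this uniform bound on $\|\overline{Q}\|_{Q\cup\{\alpha\}}$; I expect it to follow from the hypothesis that each $f_i$ satisfies $\alpha^K(f_i)\in Q$ with the \emph{same} $K$, which is exactly what makes the family $\{f_i\}$ uniformly bounded in $\|\cdot\|_{Q\cup\{\alpha\}}$, after which the closure operations $P_n$ stay bounded because $\alpha$ is confining into $Q$ (so iterating $\alpha$ on $Q$-bounded sets stays $Q$-bounded). Once $\overline{Q}$ is $\|\cdot\|_{Q\cup\{\alpha\}}$-bounded, Lemma~\ref{lem:BoundedIncreasing} finishes the proof.
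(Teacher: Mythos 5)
Your handling of the second bullet, the covering condition, and the $\alpha$-invariance of $\overline{Q}$ is correct and matches the paper. But the two points you yourself flag as the ones needing the most care --- strictness, and the uniform bound on $\|\cdot\|_{Q\cup\{\alpha\}}$ over $\overline{Q}$ --- are exactly where the proof lives, and your proposal does not close either of them. The missing idea is a single clean inclusion: $\alpha^K(\overline{Q})\subset Q$. This is proved by the same kind of induction you already used for $\alpha$-invariance: first $\alpha^K(P_0)\subset Q$, because $\alpha^K(\alpha^i(f_j))=\alpha^i(\alpha^K(f_j))\in\alpha^i(Q)\subset Q$ for all $i\geq 0$ and $\alpha^K(Q)\subset Q$; then, if $\alpha^K(P_{n-1})\subset Q$,
$$\alpha^K(P_n)\ \subset\ \alpha^{n_0}\bigl(\alpha^K(P_{n-1})^2\bigr)\cup\alpha^K(P_{n-1})\ \subset\ \alpha^{n_0}(Q^2)\cup Q\ \subset\ Q.$$
Your attempted induction on the $\|\cdot\|_{Q\cup\{\alpha\}}$-diameter of $P_n$ genuinely fails, as you noticed: a bound $D$ on $P_{n-1}$ only gives something like $2D+2n_0$ on $P_n$, which grows with $n$. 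The point is not to bound diameters level by level but to observe that a \emph{fixed} power of $\alpha$ pushes every $P_n$ back into $Q$; then every $s\in\overline{Q}$ satisfies $\|s\|_{Q\cup\{\alpha\}}\leq 2K+1$ (write $s$ as $\alpha^{-K}$, then $\alpha^K(s)\in Q$, then $\alpha^{K}$ in $G\rtimes_\alpha\mathbb{Z}$), which is the third bullet directly --- no detour through Lemma~\ref{lem:BoundedIncreasing} is needed (and note that lemma's hypothesis is boundedness with respect to $\|\cdot\|_Q$ in $G$, not $\|\cdot\|_{Q\cup\{\alpha\}}$ in $G\rtimes_\alpha\mathbb{Z}$, so invoking it as you do would require a different verification anyway).

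The same inclusion also settles strictness, which does \emph{not} follow ``by direct inspection'' from $\alpha(Q)\subsetneq Q$: an element of $Q\setminus\alpha(Q)$ could a priori lie in $\alpha(\overline{Q})$, being the image of one of the new elements $\alpha^i(f_j)$ or of a product built from them, so your proposed witness argument has no justification as stated. With $\alpha^K(\overline{Q})\subset Q$ in hand one gets the chain $\alpha^{2K}(Q)\subset\alpha^{2K}(\overline{Q})\subset\alpha^K(Q)\subset\alpha^K(\overline{Q})\subset Q\subset\overline{Q}$, from which $\alpha(\overline{Q})=\overline{Q}$ forces $\overline{Q}=Q$ and hence $\alpha(Q)=Q$ (and conversely), so strictness transfers. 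In short: everything you left open is resolved by the one inclusion $\alpha^K(\overline{Q})\subset Q$, and without it your argument for the first (strict case) and third bullets is incomplete.
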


\begin{proof}
It is clear that $\alpha(P_0) \subset P_0$. Then, notice that, if $n \geq 1$ is an integer such that $\alpha(P_{n-1}) \subset P_{n-1}$, then
$$\alpha(P_n) = \alpha^{n_0}\left( \alpha(P_{n-1})^2 \right) \cup \alpha(Q) \subset \alpha^{n_0} \left( P_{n-1}^2 \right) \cup Q = P_n.$$
Therefore, it follows by induction that $\alpha(P_n) \subset P_n$ for every $n \geq 0$, which implies that $\alpha(\overline{Q}) \subset \overline{Q}$. Next, take two elements $a,b \in \overline{Q}$. We can fix an integer $n \geq 0$ such that $a$ and $b$ both belong to $P_n$. Hence
$$\alpha^{n_0}(ab) \in \alpha^{n_0}(P_n^2) \subset P_{n+1} \subset \overline{Q}.$$
Thus, we know that $\alpha^{n_0}(\overline{Q}^2) \subset \overline{Q}$. Finally, since $Q \subset P_0 \subset \overline{Q}$, 
$$G = \bigcup\limits_{n \geq 0} \alpha^{-n}(Q) \subset \bigcup\limits_{n \geq 0} \alpha^{-n}( \overline{Q}), \text{ hence } \bigcup\limits_{n \geq 0} \alpha^{-n}( \overline{Q})=G.$$
So far, we have proved that $\alpha$ is confining $G$ into $\overline{Q}$ and we have verified the second item of our lemma. In order to verify the third item, notice that $\alpha^K(P_0) \subset Q$; and that, if $n \geq 1$ is an integer such that $\alpha^K(P_{n-1}) \subset Q$, then 
$$\alpha^K(P_n)  \alpha^{n_0} \left( \alpha^K(P_{n-1})^2 \right) \cup \alpha^K(P_{n-1}) \subset \alpha^{n_0} \left( Q^2 \right) \cup Q \subset Q.$$
Therefore, it follows by induction that $\alpha^K(P_n) \subset Q$ for every $n \geq 0$, which implies that $\alpha^K(\overline{Q}) \subset Q$. Hence
$$\sup\limits_{s \in \overline{Q} \cup \{\alpha\}} \|s\|_{Q \cup \{ \alpha \}} \leq 2K+1.$$
The reverse inequality
$$\sum\limits_{s \in Q \cup \{\alpha\}} \| s\|_{\overline{Q} \cup \{\alpha\}} \leq 1$$
is clear since $Q \subset P_0 \subset \overline{Q}$. We conclude, as desired, that $Q \cup \{\alpha\}$ and $\overline{Q} \cup \{\alpha\}$ define the same hyperbolic structure of $G \rtimes_\alpha \mathbb{Z}$.

\medskip \noindent
Finally, since we have seen that $\alpha^K(\overline{Q}) \subset Q \subset \overline{Q}$, we know that
$$\alpha^{2K} (Q) \subset \alpha^{2K}(\overline{Q}) \subset \alpha^K(Q) \subset \alpha^K(\overline{Q}) \subset Q \subset \overline{Q}.$$
We conclude that $\alpha(Q)=Q$ if and only if $\alpha(\overline{Q})= \overline{Q}$. In other words, $\alpha$ is strictly confining $G$ into $Q$ if and only if it is strictly confining $G$ into $\overline{Q}$. 
\end{proof}

\subsection{Possible focal actions}\label{section:PartialFocal}

\noindent
In this subsection, our goal is to describe the possible focal hyperbolic structures on the second partial Houghton subgroup $H_n(2) \leq H_n$. Namely:

\begin{thm}\label{thm:FocalTwo}
Let $n \geq 2$ be an integer. The partial Houghton group $H_n(2)$ admits exactly two focal hyperbolic structures, namely $[\mathrm{Fix}(R_1) \cup  \{t_{1,2}\}]$ and $[\mathrm{Fix}(R_2) \cup \{t_{1,2}^{-1}\}]$.
\end{thm}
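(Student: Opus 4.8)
The plan is to use Corollary~\ref{cor:FocalDes}, which reduces the classification of focal hyperbolic structures of $H_n(2)$ to the classification of subsets $Q$ into which $t_{1,2}$ (or $t_{1,2}^{-1}$) is strictly confining. Indeed, $H_n(2)$ decomposes as $\mathfrak{S}_\infty \rtimes \langle t_{1,2} \rangle$: the kernel of $\lambda_1$ (equivalently $\lambda_2$, since on $H_n(2)$ we have $\lambda_1 = -\lambda_2$ and all other $\lambda_k$ vanish) is exactly the locally finite group $\mathfrak{S}_\infty$ of finitely supported permutations of $R_1 \cup R_2$, and $t_{1,2}$ generates a complementary $\mathbb{Z}$. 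So it suffices to understand, up to the equivalence coming from Lemma~\ref{lem:BoundedIncreasing} and Lemma~\ref{lem:IncreaseingSubset}, which subsets $Q \subset \mathfrak{S}_\infty$ are strictly confined by conjugation by $t_{1,2}$.

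First I would analyze conjugation by $t:= t_{1,2}$ on $\mathfrak{S}_\infty$. Think of $R_1 \cup R_2$ as a copy of $\mathbb{Z}$, with $R_2$ the non-negative integers and $R_1$ the negative integers (so that $t$ acts as $x \mapsto x+1$ on this bi-infinite line, pushing $R_1$ into $R_2$). Conjugation by $t$ shifts the support of a permutation one step towards $+\infty$, i.e.\ towards the end of $R_2$. The key point is that $\mathrm{Fix}(R_1)$ — the finitely supported permutations whose support lies entirely in $R_2$, i.e.\ in $\{x \geq 0\}$ — is shifted strictly inside itself by $t$ (its image is supported in $\{x \geq 1\}$), every finitely supported permutation is eventually pushed into it, and it is closed under products; so $t$ strictly confines $\mathfrak{S}_\infty$ into $\mathrm{Fix}(R_1)$. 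By Proposition~\ref{prop:ConfiningToHyp} / Corollary~\ref{cor:FocalDes} this yields the focal structure $[\mathrm{Fix}(R_1) \cup \{t\}]$; symmetrically $t^{-1}$ strictly confines $\mathfrak{S}_\infty$ into $\mathrm{Fix}(R_2)$, giving $[\mathrm{Fix}(R_2) \cup \{t^{-1}\}]$. Note these cannot coincide: $t$ is loxodromic with positive Busemann value for the first and negative for the second (or simply, the two structures assign $t$ opposite translation directions), so they are distinct, and a short argument with specific transpositions shows neither dominates the other.

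The main work — and the main obstacle — is the converse: showing there are no other focal structures. By Corollary~\ref{cor:FocalDes}, any focal structure is $[Q \cup \{t^{\pm 1}\}]$ for some $Q$ into which $t^{\pm 1}$ is strictly confining; by symmetry assume it is $t$. I need to show that any such $Q$ is equivalent, as a generating set, to $\mathrm{Fix}(R_1)$. The inclusion $\mathrm{Fix}(R_1) \preceq Q$ should be easy: since $t$ is strictly confining into $Q$ and $\bigcup_n t^{-n} Q = \mathfrak{S}_\infty$, $Q$ must already contain permutations with arbitrarily large support in $R_2$, and using the semigroup condition $t^{n_0}(Q^2) \subset Q$ one enlarges $Q$ (via Lemma~\ref{lem:IncreaseingSubset}, which does not change the hyperbolic structure) to absorb all transpositions of adjacent points far out in $R_2$, hence all of $\mathrm{Fix}(R_1)$ up to bounded distance. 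The harder inclusion is $Q \preceq \mathrm{Fix}(R_1)$, i.e.\ elements of $Q$ have bounded $\mathrm{Fix}(R_1)$-word-length: here I would argue that if $Q$ contained elements moving points arbitrarily far into $R_1$ (the negative direction), then — because conjugating by $t$ only shifts support by one step per application, while the strict confinement forces $\alpha^n(Q) \subseteq Q$ for large $n$ and $\bigcup \alpha^{-n}(Q)$ to be everything — one derives a contradiction with $t$ being loxodromic (the structure would fail to be focal, or $t$ would fail to have the confinement property in the strict sense). Concretely, I expect to show $Q \subseteq \mathrm{Fix}(R_1) \cdot B$ for some $\|\cdot\|_{\mathrm{Fix}(R_1)}$-bounded set $B$, perhaps by a counting/pigeonhole argument on how far into $R_1$ the support of iterated conjugates can reach, or by invoking the characterization of confining subsets in locally finite groups from \cite{MR3420526, MR4585997}. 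Finally I would combine both inclusions with Lemma~\ref{lem:BoundedIncreasing} to conclude $[Q \cup \{t\}] = [\mathrm{Fix}(R_1) \cup \{t\}]$, finishing the proof.
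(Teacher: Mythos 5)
Your skeleton matches the paper's: decompose $H_n(2)=\mathfrak{S}_\infty\rtimes\langle t\rangle$, invoke Corollary~\ref{cor:FocalDes}, check that $\tau$ (resp.\ $\tau^{-1}$) strictly confines $\mathfrak{S}_\infty$ into $\mathrm{Fix}(R_1)$ (resp.\ $\mathrm{Fix}(R_2)$), and then show any strictly confining subset $Q$ yields the same structure. Your first inclusion, $\mathrm{Fix}(R_1)\preceq Q$, is the content of Lemma~\ref{lem:Cofining}; calling it easy underestimates the bootstrapping (adjacent transpositions far out in $R_2$, then arbitrary transpositions, cycles, products of cycles, then all of $\mathrm{Fix}(R_1)$, each time re-applying Lemma~\ref{lem:IncreaseingSubset}), but the plan you describe is the right one and would go through.

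The genuine gap is the converse inclusion, i.e.\ the claim that a strictly confining $Q$ cannot be strictly larger than $\mathrm{Fix}(R_1)$ in the domination order. You offer two mechanisms and neither is available. First, the references you point to do not contain a classification of confining subsets applicable here: they provide the general correspondence between strictly confining subsets and regular focal structures (which you have already used via Corollary~\ref{cor:FocalDes}), and classifications for wreath products and (generalised) Baumslag--Solitar groups, but $\mathfrak{S}_\infty$ under the Houghton shift is none of these; its confining subsets are classified precisely by Proposition~\ref{prop:StrictCofining}, which is the new content being proved. Second, no counting/pigeonhole argument on supports is apparent, and the paper's actual argument is of a different nature: assuming $Q\supseteq\mathrm{Fix}(R_1)$ is unbounded for $\|\cdot\|_{\mathrm{Fix}(R_1)\cup\{t\}}$ (detected via the word-length formula of Fact~\ref{fact:Norm}, which says such elements must move points arbitrarily deep into $R_1$), one runs a long constructive bootstrapping (Claims~\ref{claim:Gamma} through~\ref{claim:Final}) that repeatedly uses $\tau^{n_0}(Q^2)\subset Q$, a doubling trick to beat the loss of $n_0$ shifts per multiplication, and corrections by elements of $\mathrm{Fix}(R_1)\subset Q$, in order to manufacture inside $Q$ the transpositions of consecutive points arbitrarily deep in $R_1$; together with Fact~\ref{fact:SymBounded} this forces $Q=\mathfrak{S}_\infty$. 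Note also that the resulting contradiction is with \emph{strictness} of the confinement ($\tau(Q)=Q$), not with $t$ being loxodromic or the action failing to be focal in some looser sense: for $Q=\mathfrak{S}_\infty$ the element $t$ is still loxodromic, the associated structure being the lineal one. So your proposal locates the difficulty correctly but leaves unproved the step that constitutes the bulk of the paper's argument.
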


\noindent
For convenience, in the sequel we will use the following notations. First, we will set $t:=t_{1,2}$. Since $t$ normalises the subgroup $\mathfrak{S}_\infty \leq H_n$, it induces an automorphism on $\mathfrak{S}_\infty$, which we sill denote by $\tau$. Next, we will identify $R_1 \cup R_2$ with $\mathbb{Z}$ via 
$$\left\{ \begin{array}{ccc} \{1\} \times \mathbb{N} & \to & (- \infty,0) \\ (1,i) & \mapsto & -i-1 \end{array} \right. \text{ and } \left\{ \begin{array}{ccc} \{2\} \times \mathbb{N} & \to & [0,+ \infty) \\ (2,i) & \mapsto & i \end{array} \right.$$
In particular, $R_1 \cup R_2$ will be totally ordered thanks to this identification.

\medskip \noindent
We start by verifying that the subsets given by Theorem~\ref{thm:FocalTwo} are indeed focal hyperbolic structures. 

\begin{lemma}\label{lem:SecondNotComparable}
The generating sets $\mathrm{Fix}(R_1) \cup  \{t\}$ and $\mathrm{Fix}(R_2) \cup \{t^{-1}\}$ of $H_n(2)$ induce two non-comparable focal hyperbolic structures.
\end{lemma}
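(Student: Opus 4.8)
The plan is to exhibit, for each of the two candidate generating sets, an explicit confining automorphism and then invoke Corollary~\ref{cor:FocalDes} (or directly Proposition~\ref{prop:ConfiningToHyp}) to conclude that the associated structure is focal. Recall that $H_n(2) = \mathfrak{S}_\infty \rtimes \langle t \rangle$ where $t = t_{1,2}$ and $\tau$ is the induced automorphism of $\mathfrak{S}_\infty$; under the identification of $R_1 \cup R_2$ with $\mathbb{Z}$, the element $t$ acts as the shift $k \mapsto k+1$. So the first step is to check that $\tau$ strictly confines $\mathfrak{S}_\infty$ into $Q_1 := \mathrm{Fix}(R_1) \cap \mathfrak{S}_\infty$ (the finitely supported permutations supported on $[0,+\infty)$), and that $\tau^{-1}$ strictly confines $\mathfrak{S}_\infty$ into $Q_2 := \mathrm{Fix}(R_2) \cap \mathfrak{S}_\infty$ (supported on $(-\infty,0)$). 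This is a direct verification of the three axioms: $\tau(Q_1) \subsetneq Q_1$ because conjugating by $t$ pushes the support rightward by one; $\bigcup_n \tau^{-n}(Q_1) = \mathfrak{S}_\infty$ because any finitely supported permutation has support contained in some $[-N,+\infty)$, which $\tau^N$ carries into $[0,+\infty)$; and $\tau^{n_0}(Q_1^2) \subset Q_1$ already holds with $n_0 = 0$ since $Q_1$ is a subgroup. The strictness is witnessed by, say, the transposition $(0\;1)$, which lies in $Q_1$ but not in $\tau(Q_1)$. The same works for $\tau^{-1}$ and $Q_2$ by the mirror argument. One minor point to address: $\mathrm{Fix}(R_1)$ as a generating subset of $H_n(2)$ contains more than $Q_1$ — but $\mathrm{Fix}(R_1) \cup \{t\}$ generates $H_n(2)$ and differs from $Q_1 \cup \{t\}$ only by elements that are products of $t$ and elements of $Q_1$ of bounded $\|\cdot\|$, so Lemma~\ref{lem:BoundedIncreasing} gives $[\mathrm{Fix}(R_1) \cup \{t\}] = [Q_1 \cup \{t\}]$; then Corollary~\ref{cor:FocalDes} applies. (Alternatively one just observes $\mathrm{Fix}(R_1)$, intersected with $H_n(2)$, equals $Q_1$ together with translations of the ray $R_2$, which are again boundedly expressible.)

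Having established that both sets give focal structures, the remaining task is non-comparability. The clean way is to use the Busemann morphism: by Proposition~\ref{prop:ConfiningToHyp}, with respect to $[\mathrm{Fix}(R_1) \cup \{t\}]$ the Busemann morphism $\beta_1$ satisfies $\beta_1(t) > 0$, so $t$ is loxodromic and $t^{-1}$ is loxodromic with $\beta_1(t^{-1}) < 0$; symmetrically, with respect to $[\mathrm{Fix}(R_2) \cup \{t^{-1}\}]$ the Busemann morphism $\beta_2$ has $\beta_2(t^{-1}) > 0$, hence $\beta_2(t) < 0$. An element is loxodromic for a focal structure exactly when the Busemann morphism is nonzero on it, and here both structures have the same loxodromic elements (those $g$ with $\lambda_1(g) \neq 0$, equivalently $\lambda_2(g) \neq 0$). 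The key asymmetry to exploit: with respect to $[\mathrm{Fix}(R_1) \cup \{t\}]$, every element of $\mathrm{Fix}(R_1) \cap \mathfrak{S}_\infty = Q_1$ has bounded orbits (it lies in $Q_1$, the confining subset), so $Q_1$ is a bounded subset. But with respect to $[\mathrm{Fix}(R_2) \cup \{t^{-1}\}]$, the subset $Q_1$ is \emph{unbounded}: indeed $\tau^{-1}$ confines $\mathfrak{S}_\infty$ into $Q_2$, and the elements $\tau^{-m}(\sigma)$ for a fixed transposition $\sigma \in Q_1$ supported near $0$ have support marching off to $-\infty$, so their $\|\cdot\|_{Q_2}$-norms are unbounded — more precisely, since $\beta_2$ restricted to... no — one instead argues via the structure of $Q_2$-balls. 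The cleanest formulation: if $\mathrm{Fix}(R_2) \cup \{t^{-1}\} \preceq \mathrm{Fix}(R_1) \cup \{t\}$ then $\mathrm{Fix}(R_2) \cap \mathfrak{S}_\infty = Q_2$ would be bounded relative to $\|\cdot\|_{Q_1 \cup \{t\}}$, i.e. contained in a ball $B_{Q_1 \cup \{t\}}(R)$; but every word of $\|\cdot\|_{Q_1 \cup \{t\}}$-length $\leq R$ in an element of $\mathfrak{S}_\infty$ has support contained in $[-R, +\infty)$ (each generator in $Q_1$ is supported in $[0,+\infty)$ and each $t^{\pm 1}$ shifts support by one), so $Q_2$, which contains permutations with arbitrarily negative support, cannot be so bounded — contradiction. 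By symmetry the other comparison also fails, so the two structures are non-comparable.

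I expect the confining-automorphism verification to be essentially routine bookkeeping about supports of finitely supported permutations and the shift action of $t$; the only subtlety is the reduction from $\mathrm{Fix}(R_i)$ to $Q_i$ via Lemma~\ref{lem:BoundedIncreasing}, which must be stated carefully because $\mathrm{Fix}(R_1)$ contains elements of $H_n(2)$ that are not finitely supported (those acting nontrivially as translations on $R_2, \dots, R_n$ — though here, since $I = \{1,2\}$, the only extra freedom is on $R_2$, and such translations are $\|\cdot\|_{Q_1 \cup \{t\}}$-bounded because $t$ itself realizes the translation on $R_1 \cup R_2$). The genuine content — the main obstacle — is the non-comparability argument, where one must pin down precisely why a ball of bounded radius in $Q_1 \cup \{t\}$ inside $\mathfrak{S}_\infty$ has support bounded on the left but not on the right. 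This is a finite-support/shift computation and is robust, but it is the step that actually distinguishes the two structures and hence requires the careful tracking of supports described above; everything else follows formally from the confining-automorphism machinery of Section~\ref{section:confining}.
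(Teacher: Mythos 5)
Correct, and essentially the paper's argument: you verify that $\tau$ (resp.\ $\tau^{-1}$) strictly confines $\mathfrak{S}_\infty$ into $\mathrm{Fix}(R_1)$ (resp.\ $\mathrm{Fix}(R_2)$) and invoke Proposition~\ref{prop:ConfiningToHyp}, and your support-tracking estimate (a word of $\|\cdot\|_{\mathrm{Fix}(R_1)\cup\{t\}}$-length $\leq R$ lying in $\mathfrak{S}_\infty$ fixes $(-\infty,-R)$ pointwise) is exactly the lower-bound half of the paper's Fact~\ref{fact:Norm}, applied as in the paper to the transpositions $(-m,-m+1)$ and their mirror images. The only superfluous step is your reduction from $\mathrm{Fix}(R_1)$ to $Q_1$: inside $H_n(2)$ any element fixing $R_1$ pointwise has $\lambda_1=\lambda_2=0$ (and $\lambda_i=0$ for $i\geq 3$ by definition of $H_n(2)$), hence is already finitely supported, so $\mathrm{Fix}(R_1)=Q_1$ and the appeal to Lemma~\ref{lem:BoundedIncreasing} is unnecessary (also note that for $n\geq 3$ the elements of $Q_1$ are supported on $R_1^c$, not only on $R_2$, though this does not affect your estimate).
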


\begin{proof}
In order to verify that $\mathrm{Fix}(R_1) \cup  \{t\}$ and $\mathrm{Fix}(R_2) \cup \{t^{-1}\}$ induce two focal hyperbolic structures on $H_n(2)$, according to Proposition~\ref{prop:ConfiningToHyp} it suffices to show that:

\begin{claim}\label{claim:FixCofining}
The automorphism $\tau$ (resp.\ $\tau^{-1}$) is strictly confining $\mathfrak{S}_\infty$ into $\mathrm{Fix}(R_1)$ (resp.\ $\mathrm{Fix}(R_2)$).
\end{claim}

\noindent
It is clear that $\tau(\mathrm{Fix}(R_1)) = \mathrm{Fix}((- \infty,1])\subsetneq \mathrm{Fix}(R_1)$ and that $\tau^0(\mathrm{Fix}(R_1)^2) = \mathrm{Fix}(R_1)$. Next, if $\sigma \in \mathfrak{S}_\infty$ is an arbitrary permutation, then we can fix a sufficiently large integer $p \geq 1$ such that $\sigma$ fixes $(- \infty,-p]$ pointwise and we have $\tau^p(\sigma) \in \mathrm{Fix}(R_1)$. Therefore, $\bigcup_{n \geq 1} \tau^{-n}(\mathrm{Fix}(R_1)) = \mathfrak{S}_\infty$. We conclude that $\tau$ is strictly confining $\mathfrak{S}_\infty$ into $\mathrm{Fix}(R_1)$. The corresponding statement for $\tau^{-1}$ is obtained by symmetry under the automorphism of $H_n(2)$ that inverts $\tau$. This concludes the proof of Claim~\ref{claim:FixCofining}. 

\medskip \noindent
It remains to verify that the structures $[\mathrm{Fix}(R_1) \cup  \{t\}]$ and $[\mathrm{Fix}(R_2) \cup \{t^{-1}\}]$ are not comparable. As a consequence of Fact~\ref{fact:Norm} below, 
$$\| (-n, -n+1) \|_{\mathrm{Fix}(R_1) \cup \{t\}} \underset{n \to + \infty}{\longrightarrow} + \infty \text{ but } \| (-n,-n+1) \|_{\mathrm{Fix}(R_2) \cup \{t^{-1}\}} = 1.$$
By symmetry under the automorphism of $H_n(2)$ that inverts $\tau$, we have similarly
$$\| (n, n+1) \|_{\mathrm{Fix}(R_2) \cup \{t^{-1}\}} \underset{n \to + \infty}{\longrightarrow} + \infty \text{ but } \| (n,n+1) \|_{\mathrm{Fix}(R_1) \cup \{t\}} = 1.$$
This concludes the proof of our lemma.

\begin{fact}\label{fact:Norm}
The equality
$$\|\sigma\|_{\mathrm{Fix}(R_1) \cup \{t\}} = 1+2 \cdot \min \{ k \geq 0 \mid \mathrm{supp}(\sigma) \cap (- \infty, -k)= \emptyset \}$$
holds for every non-trivial $\sigma \in \mathfrak{S}_\infty$.
\end{fact}

\noindent
For every permutation $\sigma \in \mathfrak{S}_\infty$, we set
$$k(\sigma):=  \min \{ k \geq 0 \mid \mathrm{supp}(\sigma) \cap (- \infty, -k)= \emptyset \}.$$
For short, we also write $\| \cdot\|$ for $\|\cdot\|_{\mathrm{Fix}(R_1) \cup \{t\}}$. Given a non-trivial permutation $\sigma \in \mathfrak{S}_\infty$, notice that $\tau^{k(\sigma)}(\sigma)$ fixes $R_1$ pointwise. From the equality
$$\sigma = t^{-k(\sigma)} \cdot \tau^{k(\sigma)}(\sigma) \cdot t^{k(\sigma)},$$
it follows that $\|\sigma\| \leq 1+ 2k (\sigma)$. Next, decompose $\sigma$ as a word of minimal length over $\mathrm{Fix}(R_1) \cup \{t\}$, i.e.\ 
$$\sigma = t^{\epsilon_1} \sigma_1 t^{\epsilon_2}\sigma_2 \cdots t^{\epsilon_n} \sigma_n t^{\epsilon_{n+1}}$$
where $\sigma_1, \ldots, \sigma_n \in \mathrm{Fix}(R_1) \backslash \{1\}$, $n \geq 1$, $\epsilon_2, \ldots, \epsilon_n \in \{-1, 1\}$, and $\epsilon_1, \epsilon_{n+1} \in \{ 0, -1, 1\}$. Notice that
$$\|\sigma\| = n + \sum\limits_{i=1}^{n+1} |\epsilon_i|.$$
We can write
$$\sigma = t^{\epsilon_1}\sigma_1t^{-\epsilon_1} \cdot t^{\epsilon_1+\epsilon_2} \sigma_2 t^{-\epsilon_1- \epsilon_2} \cdots t^{\epsilon_1 + \cdots + \epsilon_n} \sigma_n t^{-\epsilon_1- \cdots- \epsilon_n} \cdot t^{\epsilon_1+\cdots+ \epsilon_{n+1}}.$$
Because $\sigma$ belongs to $\mathfrak{S}_\infty$, necessarily $\epsilon_1+ \cdots + \epsilon_{n+1}=0$, and we have
$$\sigma = t^{\epsilon_1}\sigma_1t^{-\epsilon_1} \cdot t^{\epsilon_1+\epsilon_2} \sigma_2 t^{-\epsilon_1- \epsilon_2} \cdots t^{\epsilon_1 + \cdots + \epsilon_n} \sigma_n t^{-\epsilon_1- \cdots- \epsilon_n} .$$
Notice that, for every $1 \leq i \leq n$, $t^{\epsilon_1+ \cdots + \epsilon_i} \sigma_i t^{- \epsilon_1- \cdots - \epsilon_i}$ fixes $(- \infty, -| \epsilon_1+ \cdots + \epsilon_i|)$ pointwise. A fortiori, if $1 \leq r \leq n$ satisfies 
$$|\epsilon_1 + \cdots + \epsilon_r| = \max \{ |\epsilon_1+ \cdots + \epsilon_i|, \ 1 \leq i \leq n \},$$
then $\sigma$ fixes $(- \infty, -| \epsilon_1 + \cdots + \epsilon_r|)$ pointwise, hence $k(\sigma) \leq |\epsilon_1 + \cdots + \epsilon_r|$. Using the fact that $\epsilon_1 + \cdots + \epsilon_{n+1}=0$, we deduce that
$$\begin{array}{lcl} 2 k(\sigma) & \leq & 2 |\epsilon_1+ \cdots + \epsilon_r| = |\epsilon_1+ \cdots + \epsilon_r| + |\epsilon_{r+1} + \cdots + \epsilon_{n+1}| \\ \\ & \leq  & \displaystyle \sum\limits_{i=1}^{n+1} |\epsilon_i| = \|\sigma\| -n \leq \| \sigma \| - 1. \end{array}$$
Thus, we have proved that $\|\sigma\| \geq 1+2 k(\sigma)$, concluding the proof of Fact~\ref{fact:Norm}. 
\end{proof}

\noindent
Thanks to Corollary~\ref{cor:FocalDes}, the description of the focal hyperbolic structures on $H_n(2)$ essentially reduces to the following statement:

\begin{prop}\label{prop:StrictCofining}
Assume that $\tau$ is strictly confining $\mathfrak{S}_\infty$ into some $Q \subset \mathfrak{S}_\infty$. Then $[Q \cup \{t\}] = [\mathrm{Fix}(R_1) \cup \{t\}]$. 
\end{prop}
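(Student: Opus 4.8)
The goal is to show that the generating sets $Q\cup\{t\}$ and $\mathrm{Fix}(R_1)\cup\{t\}$ of $H_n(2)=\mathfrak S_\infty\rtimes_\tau\langle t\rangle$ are equivalent, i.e.\ that each is $\preceq$ the other. The plan is to reduce both inequalities, through Fact~\ref{fact:Norm}, to a single ``sandwich'' inclusion for $Q$. For $a\in\mathbb Z$ write $F_a\le\mathfrak S_\infty$ for the subgroup of permutations fixing $(-\infty,a]$ pointwise, so that $\mathrm{Fix}(R_1)=F_{-1}$ and $\tau(F_a)=F_{a+1}$. By Fact~\ref{fact:Norm} one has $\|\sigma\|_{\mathrm{Fix}(R_1)\cup\{t\}}=1+2k(\sigma)$ for every non-trivial $\sigma\in\mathfrak S_\infty$. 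Hence, on the one hand, if $Q\subseteq F_{-K-1}$ for some $K\ge 0$ then $\|q\|_{\mathrm{Fix}(R_1)\cup\{t\}}\le 1+2K$ for all $q\in Q$, so $\mathrm{Fix}(R_1)\cup\{t\}\preceq Q\cup\{t\}$; and, on the other hand, if $F_{m-1}\subseteq Q$ for some $m\ge 0$, then each $\sigma\in\mathrm{Fix}(R_1)=F_{-1}$ satisfies $\tau^m(\sigma)\in F_{m-1}\subseteq Q$, whence $\sigma=t^{-m}\tau^m(\sigma)t^m$ and $\|\sigma\|_{Q\cup\{t\}}\le 2m+1$, so $Q\cup\{t\}\preceq\mathrm{Fix}(R_1)\cup\{t\}$. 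Thus it suffices to establish $F_{m-1}\subseteq Q\subseteq F_{-K-1}$ for suitable integers $K,m\ge 0$.

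For the lower inclusion $F_{m-1}\subseteq Q$, I would first extract elementary consequences of the three confining axioms. Since $\tau(Q)\subseteq Q$, the set $P:=\{p\in\mathbb Z:(p,p+1)\in Q\}$ is upward closed, and applying the exhaustion axiom to the transposition $(0,1)$ gives $P\ne\varnothing$. If $P=\mathbb Z$, i.e.\ $Q$ contains every adjacent transposition, then iterating $\tau^{n_0}(Q^2)\subseteq Q$ exactly as in the $P_n$-construction of Lemma~\ref{lem:IncreaseingSubset} forces $Q$ to contain every finite product of adjacent transpositions, so $Q=\mathfrak S_\infty$, contradicting $\tau(Q)\subsetneq Q$; hence $P=[c,\infty)$ for some finite $c$. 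Starting from this, and using the product axiom together with the facts that $\mathfrak S_\infty$ is the increasing union of the finite symmetric groups supported on the intervals $[0,N]$, and that conjugating an element of $Q$ by an involution of $Q$ lands in $Q^3$ (hence in $Q$ after applying $\tau^{2n_0}$), I would bootstrap that $Q$ contains $F_{m-1}$ for $m$ large enough. Since $Q$ is merely a subset of $\mathfrak S_\infty$, it is convenient here to first replace $Q$ by an enlargement $\overline Q$ using Lemmas~\ref{lem:BoundedIncreasing}--\ref{lem:IncreaseingSubset}, which defines the same hyperbolic structure and is still strictly confined by $\tau$, and to carry out this bootstrap for $\overline Q$.

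For the upper inclusion $Q\subseteq F_{-K-1}$, I would bound $k$ on $Q$ by contradiction: from an element of $Q$ moving a point arbitrarily deep inside $R_1$, I would try to use $\tau$-invariance and the product axiom to manufacture an adjacent transposition $(p,p+1)\in Q$ with $p<c$, contradicting $P=[c,\infty)$. Alternatively — and probably more cleanly — once the lower inclusion is known, the upper one can be avoided: $[\mathrm{Fix}(R_1)\cup\{t\}]$ (by Claim~\ref{claim:FixCofining} and Proposition~\ref{prop:ConfiningToHyp}) and $[Q\cup\{t\}]$ (by hypothesis and Proposition~\ref{prop:ConfiningToHyp}) are both focal, and since $H_n(2)$ is amenable it has no general-type hyperbolic structure, so its focal structures are maximal in $\mathcal H(H_n(2))$; the comparability $Q\cup\{t\}\preceq\mathrm{Fix}(R_1)\cup\{t\}$ obtained from the lower inclusion then already forces equality, and the bound on $k$ as a by-product.

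I expect the genuine obstacle to be the lower inclusion $F_{m-1}\subseteq Q$. The trouble is that closing $Q$ under the operations at one's disposal — shifting to the right by $\tau$, multiplying, conjugating by involutions of $Q$ — only produces permutations whose support lies in a region of the form $\{\,x\ge c+O(n_0\log\operatorname{diam})\,\}$, i.e.\ a ``logarithmically thin'' set rather than a genuine half-line $F_{m-1}$; extracting the full half-line therefore requires exploiting more of the rigidity forced on $Q$ by the interplay of exhaustion with the product axiom (which makes $Q$ contain permutations of unbounded support-width whose left endpoint nevertheless stays bounded), or a careful use of the enlargement lemmas. By contrast, the reductions of the first paragraph are routine, and — modulo the maximality of focal structures of amenable groups — the upper inclusion comes essentially for free.
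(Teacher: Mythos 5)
Your reduction of the two dominations to statements about $Q$ via Fact~\ref{fact:Norm} matches the paper's starting point, but both halves of your sandwich are left as sketches, and each conceals the real content. For the lower half, the inclusion $F_{m-1}\subseteq Q$ is stronger than what is needed (and than what the paper establishes): the confining axioms give no uniform $K$ with $\tau^K(\sigma)\in Q$ for all $\sigma$ in a tail subgroup, so the correct move is not to locate a half-line fixator inside $Q$ itself but to \emph{enlarge} $Q$ to a set $Q^+\supseteq Q\cup\mathrm{Fix}(R_1)$ into which $\tau$ still confines and which defines the same structure. This is exactly Lemma~\ref{lem:Cofining}, whose proof is a several-stage bootstrap (products of adjacent transpositions, then arbitrary transpositions in $R_2$, then cycles, then products of disjoint cycles, then all permutations supported on $R_2$, and finally a disjoint-support conjugation trick to absorb $\mathrm{Fix}(R_1)$). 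Your sketch stops precisely at the point where you name the ``logarithmically thin support'' obstruction, so this half is a genuine gap rather than a routine bootstrap; note also that adjacent transpositions only generate $\mathrm{Sym}(R_1\cup R_2)$, so for $n\ge 3$ reaching $\mathfrak S_\infty$ requires a further argument (cf.\ Fact~\ref{fact:SymBounded}).

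The more serious flaw is the shortcut you propose for the other half. Amenability rules out general-type structures, but it does not make focal structures maximal: a focal structure of an amenable group can strictly dominate another focal structure. For instance, take $G=\left(\mathbb{Z}[1/6]\times\mathbb{Z}[1/6]\right)\rtimes\mathbb{Z}$, where the stable letter $t$ acts by multiplication by $(2,3)$; then $t^{-1}$ strictly confines the kernel into both $Q_1=\{(x,y):|x|\le 1,\ |y|\le 1\}$ and $Q_2=\{(x,y):|x|\le 1\}$, and by Proposition~\ref{prop:ConfiningToHyp} both $[Q_1\cup\{t\}]$ and $[Q_2\cup\{t\}]$ are focal; they are comparable (since $Q_1\subset Q_2$) yet distinct (the elements $(0,3^N)\in Q_2$ are unbounded for $\|\cdot\|_{Q_1\cup\{t\}}$). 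So comparability of two focal structures does not force equality, and the bound $Q\subseteq F_{-K-1}$ is not free. In the paper this is where the bulk of the work lies: assuming $Q$ (already enlarged to contain $\mathrm{Fix}(R_1)$) is unbounded for $\|\cdot\|_{\mathrm{Fix}(R_1)\cup\{t\}}$, the chain of constructions in Claims~\ref{claim:Gamma}--\ref{claim:Final} manufactures every transposition of consecutive vertices of $R_1$ inside $Q$, whence $Q=\mathfrak S_\infty$, contradicting \emph{strict} confinement. Your alternative sketch for this half (producing an adjacent transposition below the threshold $c$) is likewise not carried out. In short, the framing is correct, but both essential steps are missing, and the one you expect to come for free is the heart of the proof.
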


\noindent
In order to prove Proposition~\ref{prop:StrictCofining}, we start by proving the following lemma:

\begin{lemma}\label{lem:Cofining}
Assume that $\tau$ is confining $\mathfrak{S}_\infty$ into some $Q \subset \mathfrak{S}_\infty$. There exists $Q^+ \subset \mathfrak{S}_\infty$ such that 
\begin{itemize}
	\item $\tau$ is confining $\mathfrak{S}_\infty$ into $Q^+$;
	\item $[Q \cup \{t\}]= [Q^+ \cup \{t\}]$;
	\item $Q^+$ contains $Q \cup \mathrm{Fix}(R_1)$. 
\end{itemize}
\end{lemma}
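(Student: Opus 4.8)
The plan is to obtain $Q^+$ by feeding all of $\mathrm{Fix}(R_1)$ into the enlargement procedure of Lemma~\ref{lem:IncreaseingSubset}. Since $\mathfrak{S}_\infty$ is countable, fix an enumeration $f_1,f_2,\dots$ of $\mathrm{Fix}(R_1)$. If one can produce an integer $K\geq 1$ with $\tau^K(f_j)\in Q$ for every $j$, then Lemma~\ref{lem:IncreaseingSubset} applied to this family yields a subset $\overline{Q}$ which is confining, satisfies $[Q\cup\{t\}]=[\overline{Q}\cup\{t\}]$, and contains $Q$ together with $\{\tau^i(f_j) : i\geq 0,\ j\geq 1\}\supseteq \mathrm{Fix}(R_1)$ (take $i=0$); setting $Q^+:=\overline{Q}$ then finishes the proof. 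So everything reduces to the uniform statement: \emph{there is $K$ with $\tau^K(\mathrm{Fix}(R_1))\subseteq Q$}, equivalently, $Q$ contains every finitely supported permutation whose support is far enough up the ray $R_2$ together with all of $R_3\sqcup\cdots\sqcup R_n$.

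To prove this I would build $K$ in three layers, repeatedly exploiting that $\tau$ is both forward-invariant on $Q$ and $\tau$-submultiplicative (Fact~\ref{fact:Bounded}). First, since $\mathfrak{S}_\infty=\bigcup_n\tau^{-n}(Q)$ and $\tau(Q)\subseteq Q$, some adjacent transposition $(m_0\,m_0{+}1)$ (identifying $R_1\cup R_2$ with $\mathbb{Z}$ as in Section~\ref{section:PartialFocal}) lies in $Q$, hence so does every $(m_0{+}k\ m_0{+}k{+}1)$ with $k\geq 0$; and because $\tau$ fixes the permutations of $R_3\sqcup\cdots\sqcup R_n$ pointwise, the same reasoning puts $\mathfrak{S}(R_3\sqcup\cdots\sqcup R_n)\subseteq Q$. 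Second, using the identity $(a\,c)=(a\,b)(b\,c)(a\,b)$ and Fact~\ref{fact:Bounded}, one shows by induction on the ``width'' $|a-c|$ that there is a constant $C$, depending only on $Q$, such that every transposition whose smallest coordinate is $\geq m_0+C$ — and, routing through $R_2$, every transposition with one point of support in $[m_0+C,\infty)\subset R_2$ and the other anywhere in $\mathcal{R}_n$ — belongs to $Q$; the point is that the $\tau$-power needed to absorb a transposition into $Q$ does not grow with its width. Third, every finitely supported permutation $\sigma$ is a product of two involutions, each of which is a product of pairwise disjoint transpositions supported inside $\mathrm{supp}(\sigma)$, so one further application of Fact~\ref{fact:Bounded} produces a constant $K$ such that every $\sigma$ supported in $[m_0+K,\infty)\subset R_2$ together with $R_3\sqcup\cdots\sqcup R_n$ lies in $Q$. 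Since $\tau^K(\mathrm{Fix}(R_1))$ sits inside that set once $K$ is large, this is exactly the uniform statement needed, and Lemma~\ref{lem:IncreaseingSubset} then finishes the argument (one could equally invoke Lemma~\ref{lem:BoundedIncreasing} with $S=\tau^K(\mathrm{Fix}(R_1))$ followed by a translation argument).

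The delicate point is the third layer. Peeling disjoint transpositions off a product one at a time costs a factor of $\tau^{n_0}$ per transposition, which would make $K$ depend on the size of the support; the way around this is to absorb halves of the product at once and to compensate using the forward-invariance $\tau(Q)\subseteq Q$ — that is, to work with $\tau^{-n_0}$ of the product and re-apply $\tau^{n_0}$ — so that the admissible ``base level'' only has to be raised by a bounded amount, independent of the number of transpositions. Making this bookkeeping uniform is where the real work lies; in the case $n_0=1$ the descent argument closes cleanly at a base level $m_0+C+1$, and for general $n_0$ one either tracks the constant carefully or, alternatively, replaces $Q$ at the outset by an equivalent confining subset with submultiplicativity constant $1$. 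Everything else is the bookkeeping-free verification that the resulting $Q^+$ is confining, contains $Q\cup\mathrm{Fix}(R_1)$, and defines the same hyperbolic structure, which is precisely what Lemma~\ref{lem:IncreaseingSubset} delivers.
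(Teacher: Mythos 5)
Your reduction is legitimate in principle: if one can find a single $K$ with $\tau^K(\mathrm{Fix}(R_1))\subset Q$, then one application of Lemma~\ref{lem:IncreaseingSubset} to an enumeration of $\mathrm{Fix}(R_1)$ does produce the desired $Q^+$. Your first two layers are also fine: exhaustion plus $\tau(Q)\subset Q$ puts all sufficiently deep adjacent transpositions of $R_2$ and all permutations supported on $R_3\sqcup\cdots\sqcup R_n$ into $Q$, and the width induction with Fact~\ref{fact:Bounded} (via $\tau^{2n_0}(Q^3)\subset Q$) does give a \emph{fixed} depth beyond which every transposition lies in $Q$. The genuine gap is your third layer, which is exactly the heart of the lemma. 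Writing a deep permutation as a product of two involutions reduces you to products of $N$ pairwise disjoint transpositions with $N$ unbounded, and the mechanism you propose --- ``absorb halves of the product at once'', i.e.\ apply $\tau^{-n_0}$, split, and re-apply $\tau^{n_0}$ --- loses a shift of $n_0$ at \emph{every} level of the recursion, hence $n_0\lceil\log_2 N\rceil$ in total: the admissible base level still grows with the size of the support, and no uniform $K$ comes out. What makes the constant uniform is a different split: peel off only the boundedly many transpositions (or cycles) whose support meets the bottom segment of length $n_0$ of the support, and shift everything down by $n_0$; disjointness then guarantees that the large remaining piece still lies above the \emph{same} threshold, so the induction on $N$ runs at a fixed base level and the depth is consumed only by the bounded peeled piece. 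Your remark that the case $n_0=1$ closes at a fixed level is true precisely because peeling one bottom transposition at a time has this property, but halving does not; and your fallback of replacing $Q$ by an equivalent confining subset with submultiplicativity constant $1$ is asserted without any construction and is not available in general. Since you yourself state that ``making this bookkeeping uniform is where the real work lies'', the proof is missing its essential step.

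For comparison, the paper never proves uniform absorption of $\mathrm{Fix}(R_1)$ into the original $Q$. It enlarges $Q$ in four successive applications of Lemma~\ref{lem:IncreaseingSubset} (bounded products of adjacent transpositions, bounded products of transpositions, bounded products of disjoint cycles, then all permutations supported on $R_2$), and the intermediate Claims~\ref{claim:Transpositions}, \ref{claim:Cycles} and \ref{claim:ProductCycles} are proved by exactly the peel-a-bounded-bottom-piece-and-shift induction described above, the successive enlargements being what allows the peeled piece to sit at shallow positions. Elements of $\mathrm{Fix}(R_1)$ mixing $R_2$ with the other rays are then handled at the very end not by a uniform power of $\tau$ but by Lemma~\ref{lem:BoundedIncreasing}: a conjugation by a permutation supported on $R_2$ shows $\mathrm{Fix}(R_1)\subset (Q_0^+)^3$, i.e.\ boundedness with respect to $\|\cdot\|_{Q_0^+}$, which is all that is needed. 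If you want to salvage your one-shot strategy, you must replace the halving step by the bounded-bottom-peeling argument (and spell out the ``routing through $R_2$'' step for supports meeting $R_3\sqcup\cdots\sqcup R_n$, e.g.\ by conjugating by permutations of $R_3\sqcup\cdots\sqcup R_n$, which lie in $Q$); as written, the claimed uniform $K$ is not established.
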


\begin{proof}
Fix an $n_0 \geq 0$ such that $\tau^{n_0}(Q^2) \subset Q$. Our goal is to apply Lemma~\ref{lem:IncreaseingSubset} iteratively in order to increase the size of $Q$ until it contains $\mathrm{Fix}(R_1)$ entirely.

\medskip \noindent
First, let $S$ denote the set of the products of $\leq 4n_0+1$ transpositions of two consecutive vertices in $R_2$. Notice that
$$\tau^{(4n_0+1)n_0+K}(\sigma) \in Q \text{ for every } \sigma \in S$$
where $K \geq 0$ is such that $\tau^K(0,1) \in Q$. Indeed, we can write $\sigma$ as a product $\sigma_1 \cdots \sigma_k$ where $k\leq 4n_0+1$ and where each $\sigma_i$ is a transposition of two consecutive vertices of $R_2$. Since each $\sigma_i$ is conjugate to $(0,1)$ by a positive power of $t$, say $\sigma_i= \tau^{r_i}(0,1)$ for some $r_i \geq 0$, we know that 
$$\tau^K(\sigma_i) = \tau^{r_i} \tau^K(0,1) \in \tau^{r_i}(Q) \subset Q.$$
Thus,
$$\begin{array}{lcl} \tau^{(4n_0+1)n_0+K}(\sigma) & = & \tau^{(4n_0+1-k)n_0} \left( \tau^{kn_0} \left( \tau^K(\sigma_1) \cdots \tau^K(\sigma_k) \right) \right) \\ \\ & \subset & \tau^{(4n_0+1-k)n_0} \left( \tau^{kn_0} \left( Q^k \right) \right) \\ \\ & \subset & \tau^{(4n_0+1-k)n_0}(Q) \subset Q, \end{array}$$
as desired. Thus, we can apply Lemma~\ref{lem:IncreaseingSubset} to $Q$ and $S$. Let $Q'$ denote the subset of $\mathfrak{S}_\infty$ this yields. Notice that:

\begin{claim}\label{claim:Transpositions}
For all $x,y \in R_2$ satisfying $y \geq x \geq 2n_0$, $(x,y)$ belongs to~$Q'$. 
\end{claim}

\noindent
We argue by induction over $y-x$. If $y-x \leq 2n_0$, then $(x,y)$ is a product of less than $4n_0+1$ consecutive transpositions, so we already know that it belongs to $Q'$. From now on, assume that $y-x>2n_0$. We can write
$$(x,y)= (x,x+2n_0) (x+2n_0,y)(x,x+2n_0) = \tau^{2n_0} (x-2n_0,x)(x,y-2n_0)(x-2n_0, x).$$
Notice that $(x-2n_0,x)$ belongs to $Q'$ since it can be written as a product of less than $4n_0+1$ consecutive transpositions. Moreover, we know that $(x,y-2n_0)$ belongs to $Q'$ by induction. Therefore,
$$(x,y) \in \tau^{2n_0} (Q'^3)  = \tau^{n_0} \left( \tau^{n_0}(Q'^2) \tau^{n_0}(Q') \right) \subset \tau^{n_0}(Q'^2) \subset Q'.$$
This concludes the proof of Claim~\ref{claim:Transpositions}.  

\medskip \noindent
Let $S'$ denote the set of all the products of $\leq n_0$ transpositions between two vertices of $R_2$. Notice that, if $\sigma$ is a transposition between two vertices of $R_2$, then $\tau^{2n_0}(\sigma)$ is a transposition supported on $[2n_0,+ \infty)$, so it belongs to $Q'$ according to Claim~\ref{claim:Transpositions}. Now, if $\sigma_1 \cdots \sigma_k$ is product of $k \leq n_0$ transpositions $\sigma_1, \ldots, \sigma_k$ between two vertices of $R_2$, then
$$\tau^{(k+2)n_0} (\sigma_1 \cdots \sigma_k) = \tau^{kn_0} \left( \tau^{2n_0}(\sigma_1) \cdots \tau^{2n_0} (\sigma_k) \right) \in \tau^{kn_0}(Q'^k) \subset Q'.$$
Thus, we can apply Lemma~\ref{lem:IncreaseingSubset} to $Q'$ and $S'$. Let $Q''$ denote the subset of $\mathfrak{S}_\infty$ this yields. Notice that:

\begin{claim}\label{claim:Cycles}
Every cycle supported in $[n_0,+ \infty)$ belongs to $Q''$.
\end{claim}

\noindent
We argue by induction over the length of our cycle $\sigma= (x(0),\ldots, x(n))$, where $x(i) \geq n_0$ for every $0 \leq i \leq n$. If $n\leq 1$, we already know that $\sigma$ belongs to $Q''$. From now on, we assume that $n \geq 2$. Let $x(k_1)< \cdots < x(k_{n_0})$ denote the $n_0$ smallest elements of $\{x(0), \ldots, x(n) \}$. Necessarily, each $x(i)$ distinct from the $x(k_j)$ must belong to $[2n_0,+ \infty)$. Notice that
$$(x(0),\ldots, x(n)) = \left( \prod\limits_{i=1}^{n_0} (x(k_i),x(k_i+1)) \right) \left( x(0), \ldots, \widehat{x(k_j)}, \ldots, x(n) \right)$$
$$= \tau^{n_0} \left( \left( \prod\limits_{i=1}^{n_0} (x(k_i) -n_0 ,x(k_i+1) -n_0) \right) \left( x(0)-n_0, \ldots, \widehat{x(k_j)-n_0}, \ldots, x(n) -n_0\right)\right),$$
where $\left( x(0), \ldots, \widehat{x(k_j)}, \ldots, x(n) \right)$ denotes the cycle obtained from $(x(0), \ldots, x(n))$ by omitting all the $x(k_j)$. We know by induction that 
$$\left( x(0)-n_0, \ldots, \widehat{x(k_j)-n_0}, \ldots, x(n) -n_0\right)$$ 
belongs to $Q''$, hence
$$\sigma \in \tau^{n_0}(S'Q'') \subset \tau^{n_0}(Q''^2) \subset Q''.$$
This concludes the proof of Claim~\ref{claim:Cycles}.

\medskip \noindent
Let $S''$ denote the set of all the products of $\leq n_0$ cycles with pairwise disjoint supports in $R_2$. Notice that, if $\sigma$ is a cycle supported in $R_2$, then $\tau^{n_0}(\sigma)$ is supported in $[n_0,+ \infty)$, and consequently it belongs to $Q''$ according to Claim~\ref{claim:Cycles}. Then, if $\sigma_1 \cdots \sigma_k$ is a product of $k \leq n_0$ cycles $\sigma_1, \ldots, \sigma_k$ with pairwise disjoint supports in $R_2$, then
$$\tau^{(k+1)n_0}(\sigma_1 \cdots \sigma_k) = \tau^{kn_0} \left( \tau^{n_0}(\sigma_1) \cdots \tau^{n_0} (\sigma_k) \right) \in \tau^{kn_0}(Q''^k) \subset Q''.$$
Thus, Lemma~\ref{lem:IncreaseingSubset} can be applied to $Q''$ and $S''$. Let $Q'''$ denote the subset of $\mathfrak{S}_\infty$ this yields. 

\begin{claim}\label{claim:ProductCycles}
Every product of cycles with pairwise disjoint supports in $[n_0,+ \infty)$ belongs to $Q'''$. 
\end{claim}

\noindent
We argue by induction on the number of cycles in our product $\alpha_0 \cdots \alpha_n$. We already know that the desired conclusion holds if $n <n_0$. From now on, we assume that $n\geq n_0$. Since at most $n_0$ cycles may have their supports intersecting $[n_0,2n_0]$, we can assume, up to reordering our product of pairwise commuting cycles, that $\alpha_{n_0}, \ldots, \alpha_n$ have their supports contained $[2n_0,+ \infty)$. Since $\tau^{-n_0}(\alpha_{n_0} \cdot \alpha_n)$ belongs to $Q'''$ by induction and since $\tau^{-n_0}(\alpha_0 \cdots \alpha_{n_0-1})$ belongs to $S''$, we deduce that 
$$\alpha_0 \cdots \alpha_n = \tau^{n_0} \left( \tau^{-n_0}\left( \alpha_0 \cdots \alpha_{n_0-1} \right) \left( \tau^{-n_0} \left( \alpha_{n_0} \cdots \alpha_n \right)\right) \right)$$
belongs to $\tau^{n_0}(S'' Q''') \subset \tau^{n_0}(Q'''^2) \subset Q'''.$
This concludes the proof of Claim~\ref{claim:ProductCycles}. 

\medskip \noindent
Finally, given an arbitrary permutation $\sigma$ supported on $R_2$, $\tau^{n_0}(\sigma)$ decomposes as a product of cycles with pairwise disjoint supports in $[n_0,+ \infty)$, hence $\tau^{n_0}(\sigma) \in Q'''$ according to Claim~\ref{claim:ProductCycles}. Thus, Lemma~\ref{lem:IncreaseingSubset} applies to $Q'''$ and the permutations supported on $R_2$. The subset $Q^+_0$ of $\mathfrak{S}_\infty$ it yields contains all the permutations supported on $R_2$. 

\medskip \noindent
Let us verify that $Q^+:=Q_0^+ \cup \mathrm{Fix}(R_1)$ is the subset we are looking for. According to Lemma~\ref{lem:BoundedIncreasing} and Claim~\ref{claim:FixCofining}, it suffices to verify that $\mathrm{Fix}(R_1)$ is bounded relative to $\| \cdot\|_{Q_0^+}$. So let $\sigma \in \mathrm{Fix}(R_1)$ be a permutation. Fix an integer $M \geq 0$ such that $\mathrm{supp}(\sigma) \cap R_2 \subset \{2\} \times [1,M]$. Because $\tau$ is confining $\mathscr{S}_\infty$ into $Q_0^+$, we know that there exists some $k$ sufficiently large, which we choose larger than $M$, such that $\tau^k(\sigma) \in Q_0^+$. Considering the product
$$\nu := \prod\limits_{i=1}^M (i, i+k)$$
of transpositions supported in $R_2$, notice that $\nu t^k$ fixes $1, \ldots, M$. Therefore, since $\sigma$ fixes $R_1$ pointwise, it follows that $\nu t^k$ and $\sigma$ have disjoint supports. Since $\nu$ belongs to $Q_0^+$, like any permutation supported on $R_2$, it follows that
$$\sigma = \nu \cdot \tau^k(\sigma) \cdot \nu \in (Q_0^+)^3.$$
Thus, Lemma~\ref{lem:BoundedIncreasing} applies, as claimed, proving that $Q^+$ is the subset we were looking for. 
\end{proof}

\begin{proof}[Proof of Proposition~\ref{prop:StrictCofining}.]
Assume that $\tau$ is confining $\mathfrak{S}_\infty$ into some $Q \subset \mathfrak{S}_\infty$. According to Lemma~\ref{lem:Cofining}, we can assume that $\mathrm{Fix}(R_1) \subset Q$, hence $[\mathrm{Fix}(R_1) \cup \{t\} ] \leq [Q \cup \{t\}]$. From now on, we assume that $[\mathrm{Fix}(R_1) \cup \{t\} ] < [Q \cup \{t\}]$. Our goal is to show that necessarily $Q=\mathfrak{S}_\infty$, proving that $\tau$ is not strictly confining $\mathfrak{S}_\infty$ into $Q$. 

\medskip \noindent
Since $[\mathrm{Fix}(R_1) \cup \{t\} ] < [Q \cup \{t\}]$, we know that we find a sequence of elements in $Q$ that is not bounded with respect to $\| \cdot\|_{\mathrm{Fix}(R_1) \cup \{t\}}$. Starting from such a sequence, we will construct new sequences in $Q$ that will look more and more like a transposition between two consecutive vertices in $R_1$. Eventually, we will be able to conclude that every permutation supported on $R_1$ must belong to $Q$, which will essentially proves our proposition since we already know that $Q$ contains $\mathrm{Fix}(R_1)$. For clarity, we decompose our successive constructions into a series of claims.

\medskip \noindent
For the rest of the proof, we fix an integer $n_0 \geq 0$ such that $\tau^{n_0}(Q^2) \subset Q$. 

\begin{claim}\label{claim:Gamma}
For every $n \geq 1$, there exists $\gamma_n \in Q$ fixing $(- \infty,-n)$ pointwise and satisfying $\gamma_n(-n) \neq -n$ but $\gamma_n(-n) \in R_1 \cup R_2$.
\end{claim}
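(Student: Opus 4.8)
The plan is to build each $\gamma_n$ from a single element of $Q$ that drags a very deep vertex of $R_1$, to repair its image if that image leaves $R_1 \cup R_2$, and then to translate the resulting permutation by a power of $\tau$ so that the moved vertex lands exactly at position $-n$. Recall that, at this point of the proof of Proposition~\ref{prop:StrictCofining}, we are assuming $\mathrm{Fix}(R_1) \subset Q$ and $[\mathrm{Fix}(R_1) \cup \{t\}] < [Q \cup \{t\}]$, and that $n_0 \geq 0$ is fixed with $\tau^{n_0}(Q^2) \subset Q$. From the strict inequality, $Q$ is unbounded with respect to $\|\cdot\|_{\mathrm{Fix}(R_1) \cup \{t\}}$; since Fact~\ref{fact:Norm} computes that norm as $1 + 2k(\cdot)$, this yields, for the fixed $n$, a permutation $\sigma \in Q$ with $k(\sigma)$ as large as we want, say $k := k(\sigma) \geq n + n_0$. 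By the very definition of $k(\sigma)$, the permutation $\sigma$ fixes $(-\infty,-k)$ pointwise while $\sigma(-k) \neq -k$.

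Next I would make sure that the deep vertex $-k$ is not sent by $\sigma$ into one of the rays $R_3, \ldots, R_n$ — this is the one place where one must remember that elements of $\mathfrak{S}_\infty$ permute all of $\mathcal{R}_n$, not just $R_1 \cup R_2$. If $\sigma(-k) \in R_1 \cup R_2$, simply set $\psi := \sigma \in Q$. Otherwise $\sigma(-k)$ lies in some $R_j$ with $j \geq 3$; I would take the transposition $\rho := (\sigma(-k),w)$ for an arbitrary $w \in R_2$, note that $\rho \in \mathrm{Fix}(R_1) \subset Q$, and set $\psi := \rho\sigma \in Q^2$. In both cases $\psi$ still fixes $(-\infty,-k)$ pointwise (since $\rho$ is supported off $R_1$), we still have $k(\psi) = k$, and now $\psi(-k) \in R_1 \cup R_2$ with $\psi(-k) \neq -k$.

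Finally I would put $\gamma_n := \tau^{k-n}(\psi)$. Since $\psi \in Q^2$, $k - n \geq n_0$, $\tau^{n_0}(Q^2) \subset Q$ and $\tau(Q) \subset Q$, we get $\gamma_n \in Q$. Using that, on $R_1 \cup R_2$ identified with $\mathbb{Z}$, one has $\tau^m(\psi)(x) = t^m(\psi(x-m))$ while $t$ fixes the other rays, a direct check gives the three required properties: for $x \leq -n-1$ one tracks back to $x - (k-n) \leq -k-1$, which $\psi$ fixes, so $\gamma_n(x) = x$ and $\gamma_n$ fixes $(-\infty,-n)$ pointwise; $\gamma_n(-n) = t^{k-n}(\psi(-k)) \in R_1 \cup R_2$; and $\gamma_n(-n) \neq -n$ because $\psi(-k) \neq -k$.

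The only step I expect to be non-routine is the repair step: a priori an element of $Q$ may carry $-k$ outside $R_1 \cup R_2$, and the correction has to be performed \emph{before} translating, since correcting afterwards would move the active vertex by $n_0$ and produce $\gamma_{n-n_0}$ rather than $\gamma_n$. The correction is harmless precisely because it only costs a factor $\tau^{n_0}$, which is absorbed into the translation $\tau^{k-n}$ we apply anyway — and there is room for this because $k$ may be chosen arbitrarily large. Everything else is bookkeeping with the constant $n_0$ and with the shift $\tau^{k-n}$.
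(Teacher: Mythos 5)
Your proof is correct and follows essentially the same strategy as the paper: use the strict comparison and Fact~\ref{fact:Norm} to extract elements of $Q$ moving arbitrarily deep points of $R_1$, repair the image with a transposition in $\mathrm{Fix}(R_1)\subset Q$, and translate into place using $\tau(Q)\subset Q$ and $\tau^{n_0}(Q^2)\subset Q$. The only (harmless) difference is bookkeeping: you correct before translating and absorb the cost by demanding $k\geq n+n_0$, whereas the paper translates first and then applies $\tau^{n_0}$ with the index shift $n\mapsto n+n_0$.
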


\noindent
Because $[\mathrm{Fix}(R_1) \cup \{t\} ] < [Q \cup \{t\}]$, we know that there exist $\sigma_n \in Q$ satisfying
$$\| \sigma_n\|_{\mathrm{Fix}(R_1) \cup \{t\} } \geq 1+2n \text{ for every } n \geq 1.$$
For every $n \geq 1$, let $k_n \geq 0$ be the smallest integer such that $\mathrm{supp}(\sigma_n) \cap (- \infty, -k_n) = \emptyset$. As a consequence of Fact~\ref{fact:Norm}, $k_n \geq n$. Hence
$$\xi_n:= \tau^{k_n-n}(\sigma_n) \in \tau^{k_n-n}(Q) \subset Q.$$
Notice that, since $\sigma_n(-k_n) \neq -k_n$, necessarily $\xi_n(-n) \neq -n$. However,  $\xi_n(-n)$ may not belong to $R_1 \cup R_2$. Let $\omega_n \in \mathrm{Fix}(R_1)$ be such that $\omega_n(\xi_n(-n)) \in R_1 \cup R_2$. Notice that, necessarily, $\omega_n(\xi_n(-n)) \neq -n$. For every $n \geq 1$, we set
$$\gamma_n:= \tau^{n_0}(\omega_{n+n_0} \circ \gamma_{n+n_0}).$$
Notice that $\gamma_n \in \tau^{n_0}(Q^2) \subset Q$. Also, since $\gamma_{n+n_0}$ and $\omega_{n+n_0}$ fix $(- \infty, -n-n_0)$ pointwise, necessarily $\gamma_n$ fixes $(-\infty,-n)$ pointwise. Moreover, we know by construction that $\gamma_n(-n)$ belongs to $R_1 \cup R_2$ but is distinct from $-n$. This concludes the proof of Claim~\ref{claim:Gamma}. 

\begin{claim}\label{claim:Delta}
For every $n \geq 1$, there exists $\delta_n \in Q$ fixing $(-\infty,-n)$ pointwise and satisfying $\delta_n(-n) \in R_2$.
\end{claim}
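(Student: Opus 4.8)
The plan is to upgrade $\gamma_n$ from Claim~\ref{claim:Gamma}, which moves $-n$ somewhere in $R_1 \cup R_2$, to an element $\delta_n$ that moves $-n$ specifically into $R_2$. The issue is that $\gamma_n(-n)$ might lie in $R_1$, say $\gamma_n(-n) = -m$ with $-n < -m < 0$. First I would conjugate by a suitable power of $t$: applying $\tau^{m}$ moves both $-n$ and $-m$ towards $R_2$, but this also perturbs the ``fixes $(-\infty,-n)$ pointwise'' property, so one should instead argue more carefully, using that $\gamma_{n+n_0}$ fixes a long initial segment so that after composing we can still control the support. The cleanest route is: if $\gamma_n(-n) \in R_2$ already, set $\delta_n := \gamma_n$ and we are done; otherwise $\gamma_n(-n) = -m$ for some $1 \le m < n$, and then I would consider the conjugate $\tau^{n_0}(\rho \circ \gamma_{n+n_0})$ where $\rho \in \mathrm{Fix}(R_1)$ is chosen so that $\rho$ further moves the image point. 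But $\rho$ only affects $R_2$, so this cannot fix the problem directly.

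A better approach is to use the element $t$ itself, but carefully packaged so as to stay inside $Q$. Concretely, take the $\gamma_{n+n_0}$ provided by Claim~\ref{claim:Gamma}: it lies in $Q$, fixes $(-\infty, -n-n_0)$ pointwise, and sends $-n - n_0$ to some vertex $v \neq -n-n_0$ in $R_1 \cup R_2$. If $v \in R_2$, then $\tau^{n_0}(\gamma_{n+n_0}) \in Q$ fixes $(-\infty,-n)$ pointwise and sends $-n$ into $R_2$ (since $\tau$ shifts towards $R_2$ and $v$ was already in $R_2$), so we take $\delta_n = \tau^{n_0}(\gamma_{n+n_0})$. If instead $v = -m$ with $m \le n + n_0$, then applying $\tau^{n_0}$ gives an element fixing $(-\infty,-n)$ pointwise and sending $-n$ to $-m + n_0 \cdot [\text{shift}]$ — one must check whether $n_0$ shifts enough to push $-m$ past $0$ into $R_2$; in general it does not. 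So the genuine content is to absorb a large power of $\tau$: pick $p$ with $\tau^p(v) \in R_2$, and note $\tau^p(\gamma_{n+n_0}) \in \tau^p(Q) \subset Q$ (using $\tau(Q) \subset Q$), and it fixes $(-\infty, -n-n_0+p')$... here the bookkeeping between how far $\tau^p$ shifts the support and how far it shifts the marked point $v$ is exactly balanced since $\tau$ is a global shift, so $\tau^p(\gamma_{n+n_0})$ fixes $(-\infty, -(n+n_0) + p)$ pointwise, which for $p \le n_0$ is a subray of $(-\infty, -n)$; for larger $p$ this fails.

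The resolution, which I expect to be the main obstacle, is that one cannot simply apply $\tau$ a large number of times without also shrinking the fixed segment. Instead I would iterate as in Claim~\ref{claim:Gamma}: define $\delta_n := \tau^{n_0}(\delta_{n+n_0})$ for appropriately chosen ``seeds'' so that in the limit the marked point always lands in $R_2$. Concretely: by Claim~\ref{claim:Gamma} fix $\gamma_{n}$ for all $n$; if infinitely many $\gamma_n$ already send $-n$ into $R_2$ we essentially reduce to that subsequence (combined with conjugation to realign the index); otherwise, for all large $n$ we have $\gamma_n(-n) \in R_1$, say $\gamma_n(-n) = -m_n$ with $0 < m_n < n$. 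Then $\gamma_n$ is a nontrivial permutation of the finite set $\{-n, -n+1, \ldots\} \cap (\text{support})$; compose $\gamma_n$ with a power of $\tau$ and then with an element of $\mathrm{Fix}(R_1) \subset Q$ (which is available since $\mathrm{Fix}(R_1) \subset Q$) chosen to move $\gamma_n(-n) = -m_n$: but $\mathrm{Fix}(R_1)$ fixes $R_1$ pointwise. The correct observation is that $\gamma_n^{-1} \in Q$? — no, $Q$ need not be symmetric. Thus the real tool must be: consider $\sigma := \gamma_n$, which sends $-n \mapsto -m_n$; now $\tau^{m_n}(\sigma) \in Q$ sends $-n + m_n \mapsto 0 \in R_2$, wait $-m_n + m_n = 0$, yes! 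So $\tau^{m_n}(\gamma_n)$ sends the vertex $-n + m_n$ (which is $-(n - m_n)$, still in $R_1$ since $m_n < n$) to $0 \in R_2$, and it fixes $(-\infty, -n + m_n)$ pointwise. Renaming $n' := n - m_n \ge 1$, we obtain, for suitable values $n'$, an element $\delta_{n'} \in Q$ fixing $(-\infty, -n')$ pointwise with $\delta_{n'}(-n') = 0 \in R_2$. The only gap is that $n'$ ranges over a subset of $\mathbb{N}$ rather than all of $\mathbb{N}$; I would close this by the same $\tau^{n_0}$-averaging trick used for $\gamma_n$, namely setting $\delta_n := \tau^{n_0}(\omega_{n+n_0} \circ \delta_{n+n_0})$ whenever a ``good'' index exists above $n$, with $\omega_{n+n_0} \in \mathrm{Fix}(R_1) \subset Q$ chosen to repair the landing point into $R_2$; since good indices are unbounded (they form $\{n - m_n\}$ with $n \to \infty$ and $m_n < n$, or come from the $R_2$-subsequence), this produces $\delta_n$ for every $n \ge 1$. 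The main obstacle is thus the index-bookkeeping: ensuring that after the shifts, the resulting family $\delta_n$ is indexed by all $n$, not just a sparse subset, and that it stays inside $Q$ — both handled by the ``$\tau^{n_0}$-smoothing into $Q$'' mechanism already established for $\gamma_n$.
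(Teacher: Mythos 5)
You correctly identify the main obstacle (a large power of $\tau$ shrinks the fixed initial segment at exactly the same rate as it advances the marked point), but your final resolution does not overcome it. Your construction extracts, from $\gamma_n$ with $\gamma_n(-n)=-m_n$, the element $\tau^{m_n}(\gamma_n)\in Q$, which fixes $(-\infty,-(n-m_n))$ pointwise and sends $-(n-m_n)$ to $0$; this produces $\delta_{n'}$ only for $n'=n-m_n$, and you then assert without justification that these ``good'' indices are unbounded. They need not be: Claim~\ref{claim:Gamma} only guarantees that $\gamma_n$ moves $-n$ somewhere else in $R_1\cup R_2$, and it is perfectly possible that $\gamma_n(-n)=-n+1$ for every $n$ (for instance if each $\gamma_n$ is the transposition $(-n,-n+1)$), in which case $n-m_n=1$ for all $n$ and your scheme yields only $\delta_1$. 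Since the claim is precisely about arbitrarily large $n$ (for small $n$ one can always take a $\delta_N$ with $N\geq n$ and shift it down by $\tau^{N-n}$), this is the heart of the matter, not mere index-bookkeeping. Your proposed repair $\delta_n:=\tau^{n_0}(\omega_{n+n_0}\circ\delta_{n+n_0})$ with $\omega_{n+n_0}\in\mathrm{Fix}(R_1)$ also cannot work as stated: an element of $\mathrm{Fix}(R_1)$ fixes $R_1$ pointwise, so it can never move a landing point lying in $R_1$ into $R_2$ (in Claim~\ref{claim:Gamma} the $\omega$'s only had to move points of the rays $R_3,\ldots,R_n$ into $R_1\cup R_2$, a different situation), and in any case this recursion runs the indices downwards, whereas the missing elements are those with large index.

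The idea you are missing is an amplification mechanism for the case where the displacement of $\gamma_m$ is small. Writing $\gamma_m(-m)=-k$ with $1\leq k<m$, one composes many shifted copies of $\gamma_m$ so that the marked point is pushed forward repeatedly, $-m\mapsto -k\mapsto -k+(m-k)\mapsto\cdots$, each copy advancing it by $m-k\geq 1$; the cost of staying inside $Q$ is controlled by a doubling trick. The paper sets $\Pi(\gamma_m,p)=\tau^{(2^p-1)(m-k)}(\gamma_m)\circ\cdots\circ\tau^{m-k}(\gamma_m)\circ\gamma_m$ and uses the identity $\Pi(\gamma_m,p+1)=\tau^{2^p(m-k)}\left(\Pi(\gamma_m,p)\right)\circ\Pi(\gamma_m,p)$ to prove by induction that $\tau^s(\Pi(\gamma_m,p))\in Q$ as soon as $s\geq pn_0$: the number of applications of $\tau^{n_0}$ needed grows only logarithmically in the number $2^p$ of factors, while the displacement $\Pi(\gamma_m,p)(-m)\geq 2^p-m$ grows exponentially, and $\Pi(\gamma_m,p)$ still fixes $(-\infty,-m)$ pointwise. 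Choosing the smallest $m$ with $m-n_0\lceil\log_2 m\rceil\geq n$ and setting $\delta_n:=\tau^{m-n}\left(\Pi(\gamma_m,\lceil\log_2 m\rceil)\right)$ (or $\delta_n:=\tau^{m-n}(\gamma_m)$ in the easy case $\gamma_m(-m)\in R_2$, which is the part of your argument that is correct) then wins the race between the shrinking fixed segment and the advancing marked point. Without some such exponential amplification, the balanced bookkeeping you yourself pointed out blocks the argument.
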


\noindent
Fix an $m \geq 1$ and assume that $k:=\gamma_m(-m) <0$, where $\gamma_m$ is the element of $Q$ given by Claim~\ref{claim:Gamma}. Notice that, because $\gamma_m$ fixes $(- \infty,-m)$ pointwise but does not fix $-m$, necessarily $\gamma_m(-m)>m$. Therefore, $1 \leq k <m$. For every $p \geq 0$, we define
$$\Pi(\gamma_m,p) := \tau^{(2^p-1)(m-k)}(\gamma_m) \circ \tau^{(2^p-2)(m-k)} (\gamma_m) \circ \cdots \circ \tau^{m-k}(\gamma_m) \circ \gamma_m.$$
Let us verify that the following assertions are satisfied:
\begin{itemize}
	\item[(i)] $\Pi(\gamma_m,p+1) = \tau^{2^p(m-k)} \left( \Pi(\gamma_m,p) \right) \circ \Pi(\gamma_m,p)$ for every $p \geq 0$.
	\item[(ii)] $\Pi(\gamma_m,p)(-m) = (2^p-1)(m-k)-k \geq 2^p-m$ for every $p \geq 0$. 
	\item[(iii)] $\tau^s \left( \Pi(\gamma_m,p) \right) \in Q$ for all $p \geq 0$ and $s \geq pn_0$.
	\item[(iv)] $\Pi(\gamma_m,p)$ fixes $(- \infty, -m)$ pointwise.
\end{itemize}
In order to prove $(i)$, just notice that
$$\begin{array}{lcl} \Pi(\gamma_m,p+1) & = & \tau^{(2^{p+1}-1)(m-k)} (\gamma_m) \circ \cdots \circ \tau^{(2^{p+1}-2^p)(m-k)}(\gamma_m) \circ \Pi(\gamma_m,p) \\ \\ & = & \tau^{2^p(m-k)} \left( \tau^{(2^p-1)(m-k)}(\gamma_m) \circ \cdots \circ \gamma_m \right) \circ \Pi(\gamma_m,p) \\ \\ & =& \tau^{2^p(m-k)} \left( \Pi(\gamma_m,p) \right) \circ \Pi(\gamma_m,p) \end{array}$$
Thanks to $(i)$, we can prove the equality from $(ii)$ by induction over $p$. For $p=0$, it is clear that $\Pi(\gamma_m,0)(-m) = \gamma_m(-m)=-k$. For $p \geq 1$, we write
$$\begin{array}{lcl} \Pi(\gamma_m,p)(-m) & = & \tau^{2^{p-1}(m-k)}\left( \Pi(\gamma_m,p-1) \right) \circ \Pi(\gamma_m,p-1)(-m) \\ \\ & =& \tau^{2^{p-1}(m-k)}\left( \Pi(\gamma_m,p-1) \right) \left( (2^{p-1}-1)(m-k)-k \right) \\ \\ & =& 2^{p-1}(m-k) + \Pi(\gamma_m,p-1)(-m) \\ \\ & = & 2^{p-1}(m-k) + (2^{p-1}-1)(m-k)-k = (2^p-1)(m-k)-k \end{array}$$
For the inequality, notice that
$$(2^p-1)(m-k)-k  = 2^p(m-k) -m \geq 2^p-m$$
since, as already said, we know that $k<m$. The assertion $(iii)$ can also be proved by induction over $p$ thanks to $(i)$. For $p=0$, 
$$\tau^s \left( \Pi(\gamma_m,0) \right) = \tau^s(\gamma_m) \in \tau^s(Q) \subset Q \text{ for every } s \geq 0.$$
Assuming $p \geq 1$, we have
$$\begin{array}{lcl} \tau^s \left( \Pi(\gamma_m,p) \right) & = & \tau^{n_0} \left[ \tau^{2^{p-1}(m-k)} \left[ \tau^{s-n_0} \left( \Pi(\gamma_m,p-1) \right) \right] \circ \tau^{s-n_0} \left( \Pi(\gamma_m,p-1) \right) \right] \\ \\ & \in & \tau^{n_0} \left( \tau^{2^{p-1}(m-k)}(Q)Q \right) \subset \tau^{n_0}(Q^2) \subset Q \end{array}$$
for every $s \geq pn_0$. Finally, in order to to prove the assertion $(iv)$, it suffices to notice that, for every $i \geq 0$, $\tau^{i(m-k)}(\gamma_m)$ fixes $(- \infty,-m)$ pointwise. Indeed, for every $j<-m$, we have
$$\tau^{i(m-k)}(\gamma_m)(j) = i(m-k)+ \gamma_m\left(j-i(m-k) \right) = i(m-k)+ j -i(m-k)=j.$$
Now, given an $n \geq 1$, we are ready to define the element $\delta_n$ we are looking for. Fix the smallest integer $m$ such that $m-n_0 \lceil \log_2(m) \rceil \geq n$, and set
$$\delta_n:= \left\{ \begin{array}{cl} \tau^{m-n}(\gamma_m) & \text{if } \gamma_m(-m) \in R_2 \\ \tau^{m-n} \left( \Pi(\gamma_m,\lceil \log_2(m) \rceil) \right) & \text{if } \gamma_m(-m) \notin R_2 \end{array} \right..$$
If $\gamma_m(-m) \in R_2$, then $\delta_n \in \tau^{m-n}(Q) \subset Q$; $\delta_n$ fixes $(-\infty, -n)$ pointwise since $\gamma_m$ fixes $(-\infty,-m)$ pointwise; and 
$$\delta_n(-n) = m-n + \gamma_m(-m) \geq m-n \geq 0.$$
Otherwise, if $\gamma_m(-m) \notin R_2$, then $\delta_n \in Q$ according to $(iii)$; $\delta_n$ fixes $(- \infty,-n)$ pointwise since $\Pi(\gamma_m, \lceil \log_2(m) \rceil)$ fixes $(- \infty, -m)$ pointwise according to $(iv)$; and
$$\delta_n(-n) = m-n + \Pi(\gamma_m, \lceil \log_2(m) \rceil)(-m) \geq m-n +2^{\lceil \log_2(m) \rceil} -m \geq 0$$
as a consequence of $(ii)$. This concludes the proof of Claim~\ref{claim:Delta}. 

\begin{claim}\label{claim:Epsilon}
For every $n \geq 1$, there exists $\epsilon_n \in Q$ fixing $(-\infty,-n)$ pointwise and satisfying $\epsilon_n(-n), \epsilon_n^{-1}(-n) \in R_2$.
\end{claim}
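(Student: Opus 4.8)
The plan is to bootstrap from Claim~\ref{claim:Delta}, together with the repeated‑squaring trick already used in its proof. First recall the general principle: since $\tau(Q)\subseteq Q$, applying a non‑negative power $\tau^{s}$ to an element of $Q$ that fixes $(-\infty,-M)$ pointwise produces an element of $Q$ that fixes $(-\infty,-M+s)$ pointwise, and whose value and preimage at $-M+s$ are obtained from those at $-M$ by translating by $s$. Consequently, it suffices to produce, for arbitrarily large $N$, an element $\epsilon\in Q$ fixing $(-\infty,-N)$ pointwise with $\epsilon(-N),\epsilon^{-1}(-N)\in R_2$: then $\epsilon_n:=\tau^{N-n}(\epsilon)$ lies in $Q$ and does the job for any $n\le N$.

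So fix $N$ large (how large is specified at the end) and let $\delta:=\delta_N\in Q$ be as in Claim~\ref{claim:Delta}, so $\delta$ fixes $(-\infty,-N)$ pointwise and $b:=\delta(-N)\in R_2$. The only obstruction to taking $\epsilon=\delta$ is that $\delta^{-1}(-N)$ may lie in $R_1$, and composing $\delta$ with elements of $\mathrm{Fix}(R_1)\subseteq Q$ cannot remedy this, since such elements fix $R_1$ pointwise. Instead, iterate $\delta$ after enlarging the cycle of $-N$. Choose distinct $y_1,\dots,y_N\in R_2$ outside $\mathrm{supp}(\delta)$, set $\nu:=(b\ y_1\ y_2\ \cdots\ y_N)\in\mathrm{Fix}(R_1)\subseteq Q$ and $\delta':=\delta\nu\in Q^2$. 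A direct check shows that the $\delta'$‑cycle $C$ of $-N$ is obtained from the $\delta$‑cycle of $-N$ by inserting $y_1,\dots,y_N$, in this order, just after $b$; in particular $C$ is longer than $N$, the $N{+}1$ vertices of $C$ immediately following $-N$ are $b,y_1,\dots,y_N\in R_2$, and the vertices of $C$ lying in $R_1$ are unchanged. Since $\delta'$ fixes $(-\infty,-N)$ pointwise, those $R_1$‑vertices lie in $\{-N,\dots,-1\}$, so there are at most $N$ of them; hence walking backwards from $-N$ along $C$ one first meets a vertex of $R_2$ after some number of steps $k_0\le N$. For this $k_0$ (and since $k_0\le N$ while $C$ is longer than $N$, so no wrap‑around occurs) both $(\delta')^{k_0}(-N)$ and $(\delta')^{-k_0}(-N)$ lie in $R_2$, so $\epsilon:=(\delta')^{k_0}$ fixes $(-\infty,-N)$ pointwise and satisfies $\epsilon(-N),\epsilon^{-1}(-N)\in R_2$.

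It remains to place $\epsilon$ into $Q$ up to a controlled power of $\tau$. Since $\delta'\in Q^2$ we have $\tau^{n_0}(\delta')\in Q$, and an easy induction then gives $\tau^{(j+1)n_0}\big((\delta')^{2^{j}}\big)\in Q$ for all $j\ge 0$; writing $k_0$ in binary and recombining the pieces by means of Fact~\ref{fact:Bounded} (exactly as $\Pi(\gamma_m,\cdot)$ was assembled in the proof of Claim~\ref{claim:Delta}), one obtains $\tau^{D}(\epsilon)\in Q$ for some $D=D(N)=O\big(n_0\log N\big)$ — the crucial point being that, thanks to repeated squaring, the $\tau$‑cost is logarithmic rather than linear in $k_0\le N$. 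The element $\tau^{D}(\epsilon)$ fixes $(-\infty,-N+D)$ pointwise and still sends $-N+D$ into $R_2$ both forwards and backwards. Finally choose $N$ large enough that $N-D(N)\ge n$ (possible since $N$ dominates $n_0\log N$); then $\epsilon_n:=\tau^{N-n}(\epsilon)=\tau^{\,N-n-D}\big(\tau^{D}(\epsilon)\big)$ lies in $Q$, fixes $(-\infty,-n)$ pointwise, and has $\epsilon_n(-n),\epsilon_n^{-1}(-n)\in R_2$, as required.

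The step I expect to be most delicate is the last paragraph: tracking that repeated squaring keeps the element inside $Q$ at only a logarithmic $\tau$‑cost, and that $N$ can indeed be chosen to absorb this cost. A smaller combinatorial point needing care is the verification that $\delta\nu$ splices the fresh $R_2$‑vertices into the cycle of $-N$ at the stated position without perturbing the backward neighbours of $-N$ (and that it still fixes $(-\infty,-N)$ and sends $-N$ to $b$); the remaining ingredients — counting the $R_1$‑vertices of the cycle, the pigeonhole giving $k_0\le N$, and the concluding translation down to index $n$ — are routine.
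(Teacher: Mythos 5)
There is a genuine gap, and it sits exactly at the point the paper's own proof spends most of its effort on. Your pigeonhole step asserts that, walking backwards from $-N$ along the $\delta'$-cycle $C$, one meets a vertex of $R_2$ within $k_0\le N$ steps, ``because the $R_1$-vertices of $C$ lie in $\{-N,\dots,-1\}$''. This tacitly assumes every vertex of $C$ lies in $R_1\cup R_2$. But here $\mathfrak{S}_\infty$ is the group of finitely supported permutations of all of $\mathcal{R}_n$, so for $n\ge 3$ the element $\delta_N$ given by Claim~\ref{claim:Delta} may move many points of $R_3,\dots,R_n$: only $\delta_N(-N)\in R_2$ is guaranteed, nothing about the rest of its cycle. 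Concretely, if the $\delta$-cycle of $-N$ is $(-N,\,b,\,z_1,\dots,z_M)$ with $b\in R_2$, $z_1,\dots,z_M\in R_3$ and $M\gg N$, then after splicing in $y_1,\dots,y_N$ the first $R_2$-vertex met backwards from $-N$ is at distance $M+1>N$, and $(\delta')^{M+1}(-N)=z_{M-N}\in R_3$; so no exponent $k_0$ with the two-sided property and the bound $k_0\le N$ is produced, and without that bound the $\tau$-cost $D$ is no longer $O(n_0\log N)$, so the final choice of $N$ with $N-D\ge n$ collapses. This is precisely why the paper first uses transpositions from $\mathrm{Fix}(R_1)\subset Q$ (the $\omega_n$'s) to force the relevant image/preimage into $R_1\cup R_2$ (passing from $\delta_n$ to $\pi_n$), and then, instead of taking a power of a single element, composes $\lceil\log_2 m\rceil$ translated copies $\Omega(\pi_m,p)$ whose key feature (item $(v)$) is that they push $\pi_m^{-1}(-m)$ deep into $R_2$ while leaving $\pi_m(-m)$ untouched, again at only logarithmic $\tau$-cost.

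The rest of your argument is sound: the splicing computation for $\delta'=\delta\nu$, the repeated-squaring bookkeeping giving $\tau^{O(n_0\log k_0)}\bigl((\delta')^{k_0}\bigr)\in Q$ via Fact~\ref{fact:Bounded}, and the final translation down to index $n$ all work, and for $n=2$ (where $R_1\cup R_2$ is everything) your proof is complete and genuinely different in mechanism from the paper's. To rescue it for general $n$ you would need an extra preliminary step in the spirit of the paper's $\omega_n$'s, e.g.\ conjugating $\delta'$ by an element of $\mathrm{Fix}(R_1)\subseteq Q$ that sends $\mathrm{supp}(\delta')\setminus R_1$ into $R_2$ (absorbed into $Q$ at bounded $\tau$-cost by Fact~\ref{fact:Bounded}), after which your cycle-counting argument applies; as written, though, the claimed bound $k_0\le N$ is false and the proof does not go through.
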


\noindent
The strategy is to start from the elements $\delta_n$ given by Claim~\ref{claim:Delta} and to apply the same arguments as in Claims~\ref{claim:Gamma} and~\ref{claim:Delta} in order to move $\delta_n^{-1}(-n)$ inside $R_2$. We start by constructing, for every $n \geq 1$, an element $\pi_n \in Q$ fixing $(-\infty,-n)$ pointwise and satisfying $\pi_n(-n) \in R_2$ and $\pi_n^{-1}(-n) \in R_1 \cup R_2$. 

\medskip \noindent
For every $n \geq 1$, let $\omega_n$ be the identity if $\delta_n^{-1}(-n) \in R_1 \cup R_2$ or a transposition $(\delta_n^{-1}(-n), q_n)$ otherwise where $q_n$ is a large integer than does not belong to $\mathrm{supp}(\delta_n)$. Notice that, since $\omega_n$ fixes $R_1$ pointwise, it belongs to $Q$. We set
$$\pi_n:= \tau^{n_0} \left( \delta_{n+n_0} \circ \omega_{n+n_0} \right) \text{ for every } n \geq 1.$$
It is clear that $\pi_n \in \tau^{n_0}(Q^2) \subset Q$, and that $\pi_n$ fixes $(- \infty,-n)$ pointwise since $\delta_{n+n_0}$ and $\omega_{n+n_0}$ both fix $(- \infty, -n-n_0)$ pointwise. Moreover, 
$$\pi_n(-n) = n_0 + \delta_{n+n_0}(-n-n_0) \geq 0.$$
Finally, we have 
$$\pi_n^{-1}(-n) = n_0 + \omega_{n+n_0}\left( \delta_{n+n_0}^{-1}(-n-n_0) \right) = n_0 + q_{n+n_0} \geq 0.$$
Thus, our elements $\pi_n$ satisfy the desired conditions.

\medskip \noindent
Next, fix an $m \geq 1$ and assume that $k:=\pi_m^{-1}(-m)<0$. Notice that, because $\pi_m$ fixes $(-\infty,-m)$ pointwise but does not fix $-m$, necessarily $\pi_m^{-1}(-m)>m$. Therefore, $1 \leq k <m$. For every $p \geq 0$, we define
$$\Omega(\pi_m,p):= \pi_m \circ \tau^{m-k}(\pi_m) \circ \tau^{2(m-k)}(\pi_m) \circ \cdots \circ \tau^{(2^p-1)(m-k)}(\pi_m).$$
Let us verify that the following assertions are satisfied:
\begin{itemize}
	\item[(i)] $\Omega(\pi_m,p+1) = \Omega(\pi_m,p) \circ \tau^{2^p(m-k)} ( \Omega(\pi_m,p))$ for every $p \geq 0$.
	\item[(ii)] $\Omega(\pi_m,p)^{-1}(-m) \geq 2^p-m$ for every $p \geq 0$.
	\item[(iii)] $\tau^s(\Omega(\pi_m,p)) \in Q$ for all $p \geq 0$ and $s \geq pn_0$.
	\item[(iv)] $\Omega(\pi_m,p)$ fixes $(- \infty,-m)$ pointwise.
	\item[(v)] $\Omega(\pi_m,p)(-m) = \pi_m(-m)$ for every $p \geq 0$.
\end{itemize}
Notice that $\Omega(\pi_m,p) = \Pi(\pi_m^{-1},p)^{-1}$ for every $p \geq 0$, where $\Pi(\cdot,\cdot)$ is the expression used in Claim~\ref{claim:Delta}. As a consequence, the items $(i)$, $(ii)$, and $(iv)$ follow from the corresponding assertions proved in Claim~\ref{claim:Delta}. The item $(iii)$ does not follow formally from the corresponding assertion proved in Claim~\ref{claim:Delta}, but the proof is exactly the same. Only our assertion $(v)$ requires a justification. We prove the equality by induction over $p$ thanks to $(i)$. For $p=0$, the equality is clear. For $p \geq 1$, we can write
$$\begin{array}{lcl} \Omega(\pi_m,p)(-m) & = & \Omega(\pi_m,p-1) \circ \tau^{2^{p-1}(m-k)} ( \Omega(\pi_m,p-1)) (-m) \\ \\ & = & \Omega(\pi_m,p-1) \left( 2^{p-1}(m-k) + \Omega(\pi_m, p-1) (-m-2^{p-1}(m-k) ) \right) \\ \\ & = & \Omega(\pi_m,p-1) ( -m) = \pi_m(-m)\end{array}$$
where the second equality is justified by $(iv)$. 

\medskip \noindent
We are now ready to construct our elements. Fix an $n \geq 1$ and let $m$ denote the smallest integer satisfying $m-n_0 \lceil \log_2(m) \rceil \geq n$. We define
$$\epsilon_n:= \left\{ \begin{array}{cl} \tau^{m-n} (\pi_m) & \text{if } \pi_m^{-1}(-m) \in R_2 \\ \tau^{m-n} \left( \Omega(\pi_m,\lceil \log_2(m) \rceil) \right) & \text{if } \pi_m^{-1}(-m) \notin R_2 \end{array} \right..$$
We can conclude as in Claim~\ref{claim:Delta}. Indeed, notice that $\epsilon_n \in Q$ as a consequence of $(iii)$ and because $\pi_m \in Q$; that $\epsilon_n$ fixes $(- \infty,-n)$ pointwise as a consequence of $(iv)$ and because $\pi_m$ fixes $(- \infty,-m)$ pointwise; that $\epsilon_n(-n)$ belongs to $R_2$ as a consequence $(v)$ and because $\pi_m(-m) \in R_2$; and finally that $\epsilon^{-1}_n(-n)$ belongs to $R_2$ as a consequence of $(ii)$. This concludes the proof of Claim~\ref{claim:Epsilon}. 

\medskip \noindent
For our next claim, we need to introduce some terminology. Given a set $X$, a point $x \in X$, and a permutation $\sigma \in \mathrm{Sym}(X)$, we denote by $\mathrm{ord}(\sigma,x)$ the smallest integer $k \geq 1$ such that $\sigma^k(x)=x$. Equivalently, $\mathrm{ord}(\sigma,x)$ is one more than the length of the cycle containing $x$ in the decomposition of $\sigma$ in a product of cycles with pairwise disjoint supports. 

\begin{claim}\label{claim:Eta}
For every $n \geq 1$, there exists $\eta_n \in Q$ such that:
\begin{itemize}
	\item $\eta_n$ fixes $(-\infty,-n)$ pointwise;
	\item $\eta_n(-n), \eta_n^{-1}(-n) \in R_2$;
	\item and $\mathrm{ord}(\eta_n,-n)=1+2^{\kappa(n)}$ where $\kappa(n):= 1+\lceil \log_2(n+n_0) \rceil$.
\end{itemize}
\end{claim}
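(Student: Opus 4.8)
The plan is to bootstrap from the elements $\epsilon_n \in Q$ furnished by Claim~\ref{claim:Epsilon}, which already fix $(-\infty,-n)$ pointwise and move $-n$ into $R_2$ in both directions, to new elements whose cycle through $-n$ has a prescribed (exponential) length $1+2^{\kappa(n)}$. The key mechanism is the same ``doubling'' construction used in Claims~\ref{claim:Delta} and~\ref{claim:Epsilon}: by composing conjugates $\tau^{j\ell}(\epsilon_m)$ of a single element one can lengthen the orbit of $-m$ while keeping control of where the orbit lives and of membership in $Q$ (since $\tau^{n_0}(Q^2)\subset Q$, at most a logarithmic number of doublings costs only a bounded power of $\tau$, hence stays in $Q$).

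Concretely, first I would observe that if $\epsilon_m$ fixes $(-\infty,-m)$ pointwise, sends $-m$ to $a:=\epsilon_m(-m)\in R_2$, and sends $b:=\epsilon_m^{-1}(-m)\in R_2$ to $-m$, then a suitable product of the form $\epsilon_m\circ\tau^{s}(\epsilon_m)\circ\tau^{2s}(\epsilon_m)\circ\cdots$ — with $s$ chosen so that the shifted copies interlock at the point $-m$ rather than fixing it — produces an element whose orbit through $-m$ has been doubled, while $(-\infty,-m)$ is still fixed (the shifts $\tau^{is}$ fix $(-\infty,-m)$ pointwise by the same one-line computation as in assertion $(iv)$ of Claim~\ref{claim:Delta}). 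Iterating $\kappa(m)$ times, I get an element $\zeta_m\in Q$ (membership via the doubling estimate of item $(iii)$ of the previous claims, valid since $\kappa(m)$ is logarithmic in $m$) whose orbit through $-m$ has length exactly $1+2^{\kappa(m)}$ — or rather at least that, after which one truncates/adjusts by a bounded permutation supported on $R_2$ (which lies in $Q$ by Lemma~\ref{lem:Cofining}) to make the length exactly $1+2^{\kappa(m)}$ and to keep both $\zeta_m(-m)$ and $\zeta_m^{-1}(-m)$ in $R_2$. Finally, as in Claims~\ref{claim:Delta} and~\ref{claim:Epsilon}, I set $m$ to be the smallest integer with $m-n_0\lceil\log_2 m\rceil\geq n$, put $\eta_n:=\tau^{m-n}(\zeta_m)$, and check the three bullet points: fixing $(-\infty,-n)$ pointwise follows from $\zeta_m$ fixing $(-\infty,-m)$ pointwise together with the shift computation; $\eta_n(-n),\eta_n^{-1}(-n)\in R_2$ follows because $\tau^{m-n}$ only translates points of $R_2$ further into $R_2$; and $\mathrm{ord}(\eta_n,-n)=\mathrm{ord}(\zeta_m,-m)=1+2^{\kappa(m)}$, where one must verify $\kappa(m)=\kappa(n)$ — this should hold because $m-n$ and $m-n_0\lceil\log_2 m\rceil$ differ only logarithmically, so $\lceil\log_2(m+n_0)\rceil=\lceil\log_2(n+n_0)\rceil$ for the minimal such $m$ (if a genuine off-by-one discrepancy appears, one absorbs it by one more or one fewer doubling step, adjusting $\kappa$'s definition accordingly — but the statement's $\kappa(n):=1+\lceil\log_2(n+n_0)\rceil$ is presumably chosen precisely so this matches).

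The main obstacle I anticipate is the bookkeeping in the doubling step that controls the orbit \emph{length} rather than merely the image of a single point: in Claims~\ref{claim:Delta}–\ref{claim:Epsilon} one only needed to push $-m$ (or its preimage) into $R_2$, whereas here one needs the full orbit $\{-m,\zeta_m(-m),\zeta_m^2(-m),\ldots\}$ to close up after exactly $1+2^{\kappa(m)}$ steps. Making the interlocking shifts chain the orbits together end-to-end (so that the orbit through $-m$ in $\epsilon_m\circ\tau^{s}(\epsilon_m)$ genuinely has twice the length, with no accidental early return) requires choosing the shift parameter $s$ larger than the diameter of $\mathrm{supp}(\epsilon_m)$ so the two conjugates' supports overlap only at the linking point, and then tracking that the composite's orbit is the concatenation. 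This is a finite, explicit verification but it is the delicate part; everything else is a routine repetition of the estimates already established.
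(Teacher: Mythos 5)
There is a genuine gap, and it sits exactly where you flagged the ``delicate part''. Your doubling mechanism composes shifted conjugates $\tau^{js}(\epsilon_m)$ and asks that consecutive copies interlock at a single point of the orbit of $-m$. Two things break. First, taking $s$ larger than the diameter of $\mathrm{supp}(\epsilon_m)$ makes the supports inside $R_1\cup R_2$ \emph{disjoint}, so no chaining happens at all; to get an overlap in exactly one prescribed point of the cycle through $-m$ you would need a very precise choice of $s$, which cannot be made in general since $\epsilon_m$ is an unknown element of $Q$. Second, and more seriously, $\tau$ is conjugation by $t_{1,2}$, which fixes the rays $R_3,\ldots,R_n$ pointwise; hence every shifted copy $\tau^{js}(\epsilon_m)$ has the \emph{same} support on those rays as $\epsilon_m$ itself, no matter how large $s$ is. Since nothing prevents the cycle of $\epsilon_m$ through $-m$ from visiting those rays, the composite's cycle through $-m$ is uncontrolled: it can collide with these shared support points and close up early or wander arbitrarily. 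The ``truncate/adjust by a bounded permutation supported on $R_2$'' step that is supposed to repair the length is precisely the nontrivial surgery, and it cannot be performed without first knowing where the cycle lives. Finally, your closing bookkeeping is genuinely off, not just off-by-one in spirit: with $m$ minimal such that $m-n_0\lceil\log_2 m\rceil\geq n$ one does \emph{not} have $\kappa(m)=\kappa(n)$ in general (e.g.\ $n_0=1$, $n=7$ gives $m=11$ and $\kappa(m)=5\neq 4=\kappa(n)$), so the stated order $1+2^{\kappa(n)}$ would not be achieved.

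The paper's route avoids doubling altogether, because the exponential length is free of charge: after Lemma~\ref{lem:Cofining} we have $\mathrm{Fix}(R_1)\subset Q$, so any permutation fixing $R_1$ pointwise -- however long its cycles -- costs nothing. The actual difficulty is to normalise the unknown cycle of $\epsilon_n$ through $-n$. The paper writes $\epsilon_n=(x_1=-n,x_2,\ldots,x_N)\circ\epsilon_n'$ and multiplies by an explicit $\theta\in\mathrm{Fix}(R_1)$ that short-circuits the maximal subarcs of the cycle lying outside $R_1$; this shrinks the cycle through $-n$ to length $r\leq 3n$ while keeping $\epsilon_n(-n)$ and $\epsilon_n^{-1}(-n)$ as its neighbours. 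Then one splices in a single fresh cycle $(\epsilon_n(-n),M+1,\ldots,M')$ supported on $R_2$ (again in $\mathrm{Fix}(R_1)\subset Q$) to make the length \emph{exactly} $1+2^{\kappa(n-n_0)}$. The product lies in $Q\cdot\mathrm{Fix}(R_1)\subset Q^2$, so a single $\tau^{n_0}$ puts it in $Q$, and setting $\eta_n:=\tau^{n_0}(\lambda_{n+n_0})$ -- a shift by exactly $n_0$, not by your variable $m-n$ -- gives the order parameter $\kappa(n)$ on the nose. If you want to salvage your approach, you would first have to carry out this $\theta$-type clean-up anyway, at which point the iterated doubling becomes superfluous.
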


\noindent
Fix an $n \geq 1$. We want to construct a permutation $\lambda_n$ satisfying the following conditions:
\begin{itemize}
	\item $\tau^{n_0}(\lambda_n) \in Q$;
	\item $\lambda_n$ fixes $(- \infty,-n)$ pointwise;
	\item $\lambda_n(-n), \lambda_n^{-1}(-n) \in R_2$;
	\item and $\mathrm{ord}(\lambda_n,-n) = 1+ 2^{\kappa(n-n_0)}$
\end{itemize}
Once such an element constructed for every $n \geq 1$, it will suffice to define $\eta_n:= \tau^{n_0}(\lambda_{n+n_0})$ for every $n \geq 1$ in order to conclude the proof of our claim. From now on, we focus on the construction of $\lambda_n$. 

\medskip \noindent
Let $\epsilon_n$ denote the permutation given by Claim~\ref{claim:Epsilon}. We can decompose it as
$$\epsilon_n = \left( x_1:=-n, x_2, \ldots, x_N \right) \circ \epsilon_n'$$
where $(x_1, \ldots, x_N)$ is the cycle containing $-n$ in the decomposition of $\epsilon_n$ as a product of pairwise disjoint cycles and where $\epsilon_n'$ is the product of the remaining cycles. Notice that $N= \mathrm{ord}(\epsilon_n,-n)$. Define a permutation $\theta$ as the product of the cycles of the following form. If $1 < i <j \leq N$ are such that $x_i, \ldots, x_j \notin R_1$ but both $x_{i-1}$ and $x_{j+1}$ (when $j < N$) belong to $R_1$, then $(x_{j-1}, x_{j-2}, \ldots, x_i)$ is a cycle of $\theta$. The picture to keep in mind is essentially the following, where white (resp.\ black) dots belong (resp.\ do not belong) to $R_1$. 

\begin{center}
\includegraphics[width=0.9\linewidth]{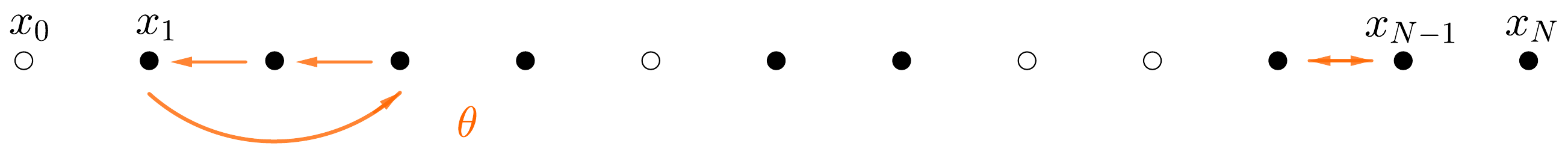}
\end{center}

\noindent
Notice that 
$$\epsilon_n \circ \theta = (x_1, \ldots, x_N) \circ \epsilon_n' \circ \theta = (x_1, \ldots, x_N) \circ \theta \circ \epsilon_n' = (x_{i_1}, \ldots, x_{i_r}) \circ \epsilon_n'$$
where $i_1< \cdots <i_r$ are such that each $x_{i_s}$ either belongs to $R_1$ or does not lie between two elements not in $R_1$ (i.e.\ one of $x_{i_s-1}$ and $x_{i_s+1}$ must belong to $R_1$). Notice that $x_{i_1}=-n$, $x_{i_2}=x_2 = \epsilon_n(-n)$, and that $x_{i_r} = \epsilon_n^{-1}(-n)$. Moreover, since at least a third of the $x_{i_s}$ must belong to $(-n,0)$, we have $\mathrm{ord}(\epsilon_n \circ \theta, -n) = r \leq 3n$. Consequently, if we set a sufficiently large integer $M \geq 1$ such that $\epsilon_n\circ \theta$ fixes $(M,+ \infty)$ pointwise, then the constant
$$M':= M+ 2^{\kappa(n-n_0)}+1 - \mathrm{ord}(\epsilon_n \circ \theta, -n)$$
is positive. Let us verify that
$$\lambda_n: = \epsilon_n \circ \theta \circ \left( \epsilon_n(-n), M+1, M+2, \ldots, M' \right)$$
satisfies the conditions we are interested in. Clearly, $\lambda_n \in Q^2$ so $\tau^{n_0}(\lambda_n) \in \tau^{n_0}(Q^2) \subset Q$. It is also clear by construction that $\lambda_n$ fixes $(-\infty,-n)$ pointwise. Then, by noticing that
$$\begin{array}{lcl} \lambda_n & = & \epsilon_n \circ \theta \circ (\epsilon_n(-n), M+1, M+2, \ldots, M') \\ \\ & = & (x_{i_1}, \ldots, x_{i_r}) \circ \epsilon_n' \circ (\epsilon_n(-n), M+1, M+2, \ldots, M') \\ \\ &= & (x_{i_1}, \ldots, x_{i_r}) \circ (x_{i_2}, M+1, M+2, \ldots, M') \circ \epsilon_n' \\ \\ & = & (x_{i_1}, x_{i_2}, M+1, M+2,\ldots, M', x_{i_3}, \ldots, x_{i_r}) \circ \epsilon_n', \end{array}$$
we can write
$$\lambda_n(-n) =\lambda_n(x_{i_1})=x_{i_2}= \epsilon_n(-n) \text{ and } \lambda_n^{-1}(-n) = \lambda_n^{-1}(x_{i_1})= x_{i_r} = \epsilon_n^{-1}(-n),$$
hence $\lambda_n(-n),\lambda_n^{-1}(-n) \in R_2$. Finally, notice that
$$\mathrm{ord}(\lambda_n) = r + M'- M = \mathrm{ord}(\epsilon_n \circ \theta,-n) + M'-M = 1+ 2^{\kappa(n-n_0)}.$$
This concludes the proof of Claim~\ref{claim:Eta}. 

\begin{claim}\label{claim:AlphaBeta}
For every $n \geq 1$, there exist $\alpha_n,\beta_n \in Q$ such that
\begin{itemize}
	\item $\alpha_n \circ \beta_n(-n)= \beta_n \circ \alpha_n(-n)=-n$;
	\item $\alpha_n$ and $\beta_n$ fix $(-\infty,-n)$ pointwise;
	\item and $\alpha(-n),\alpha^{-1}(-n) \in R_2$.
\end{itemize}
\end{claim}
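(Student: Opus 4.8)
The plan is to produce $\alpha_n$ and $\beta_n$ by carrying out the whole construction at an auxiliary level $m\gg n$ and then descending by the shift $\tau^{m-n}$, exactly in the spirit of the construction of $\epsilon_n$ in Claim~\ref{claim:Epsilon}. Given $n\geq 1$, let $m=m(n)$ be the smallest integer with $m-\kappa(m)n_0\geq n$; such $m$ exists because $\kappa(m)=1+\lceil\log_2(m+n_0)\rceil$ grows like $O(\log m)$, and since $\kappa(m)n_0\geq 0$ this forces $m\geq n$, so $m-n\geq 0$. Let $\eta_m\in Q$ be the element provided by Claim~\ref{claim:Eta} at level $m$: it fixes $(-\infty,-m)$ pointwise, satisfies $\eta_m(-m),\eta_m^{-1}(-m)\in R_2$, and $\mathrm{ord}(\eta_m,-m)=1+2^{\kappa(m)}$.

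The first step is to realise the power $P:=\eta_m^{2^{\kappa(m)}}$ inside $Q$ up to a controlled shift. Setting $g_0:=\eta_m$ and $g_j:=\tau^{n_0}(g_{j-1}^2)$, a one-line induction using the choice of $n_0$ (i.e.\ $\tau^{n_0}(Q^2)\subset Q$) gives $g_j=\tau^{jn_0}(\eta_m^{2^j})\in Q$ for every $j$, so in particular $\tau^{\kappa(m)n_0}(P)=g_{\kappa(m)}\in Q$. I then record the three facts about $P$ that matter: $P$ fixes $(-\infty,-m)$ pointwise (being a power of $\eta_m$); $P(-m)=\eta_m^{-1}(-m)$; and $P(\eta_m(-m))=\eta_m^{1+2^{\kappa(m)}}(-m)=-m$. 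The last two are immediate from $\mathrm{ord}(\eta_m,-m)=1+2^{\kappa(m)}$, which says precisely that $\eta_m^{1+2^{\kappa(m)}}$ fixes $-m$.

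Now define $\alpha_n:=\tau^{m-n}(\eta_m)$ and $\beta_n:=\tau^{m-n}(P)$. The verification that $\alpha_n,\beta_n\in Q$ is where the calibration of $m$ is used: $\alpha_n\in Q$ since $\eta_m\in Q$, $m-n\geq 0$ and $\tau(Q)\subset Q$; and $\beta_n=\tau^{(m-n)-\kappa(m)n_0}\bigl(\tau^{\kappa(m)n_0}(P)\bigr)\in Q$ because $(m-n)-\kappa(m)n_0\geq 0$ by the defining property of $m$ and $\tau^{\kappa(m)n_0}(P)\in Q$. Since $\mathrm{supp}(\tau^k(\sigma))=\mathrm{supp}(\sigma)+k$ and both $\eta_m$ and $P$ are supported in $[-m,+\infty)$, both $\alpha_n$ and $\beta_n$ are supported in $[-n,+\infty)$, hence fix $(-\infty,-n)$ pointwise. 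Finally, using $\tau^{m-n}(\sigma)(j)=(m-n)+\sigma(j-(m-n))$ I compute $\alpha_n(-n)=(m-n)+\eta_m(-m)$ and $\alpha_n^{-1}(-n)=(m-n)+\eta_m^{-1}(-m)$, both $\geq 0$ (so in $R_2$) as $m\geq n$; then $\beta_n(-n)=(m-n)+P(-m)=(m-n)+\eta_m^{-1}(-m)=\alpha_n^{-1}(-n)$, so $\alpha_n\circ\beta_n(-n)=-n$; and $\beta_n(\alpha_n(-n))=(m-n)+P(\eta_m(-m))=(m-n)+\eta_m^{1+2^{\kappa(m)}}(-m)=(m-n)-m=-n$, so $\beta_n\circ\alpha_n(-n)=-n$. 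This gives all the required properties.

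The only genuine obstacle is the tension already present throughout this section: a product of two elements of $Q$ only re-enters $Q$ after a strictly positive shift $\tau^{n_0}$, but any positive shift applied to a permutation fixing $(-\infty,-n)$ also makes it fix $-n$ — which would be fatal, since $\beta_n$ must move $-n$ into $R_2$. Doing the power computation at the higher level $m$ and then descending by exactly $\tau^{m-n}$, with $m$ chosen so that $m-n$ absorbs the accumulated shift $\kappa(m)n_0$, is precisely what reconciles ``$\beta_n\in Q$'' with ``$\beta_n$ moves $-n$''; everything else is routine bookkeeping with supports and the single order identity $\eta_m^{1+2^{\kappa(m)}}(-m)=-m$.
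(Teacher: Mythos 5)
Your construction is exactly the paper's: the same choice of $m$ as the smallest integer with $m-\kappa(m)n_0\geq n$, the same definitions $\alpha_n=\tau^{m-n}(\eta_m)$ and $\beta_n=\tau^{m-n}\bigl(\eta_m^{2^{\kappa(m)}}\bigr)$, and the same computations using $\mathrm{ord}(\eta_m,-m)=1+2^{\kappa(m)}$ and the support of $\eta_m$. The only difference is that you spell out, via the doubling sequence $g_j=\tau^{jn_0}(\eta_m^{2^j})\in Q$, why $\beta_n\in Q$ — a point the paper leaves implicit in its calibration of $m$ — so the proof is correct and essentially identical.
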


\noindent
Fix an $n \geq 1$ and let $m$ denote the smallest integer satisfying $m-\kappa(m)n_0 \geq n$. Set
$$\alpha_n:= \tau^{m-n}\left( \eta_m \right)  \text{ and } \beta_n:= \tau^{m-n} \left( \eta_m^{2^{\kappa(m)}} \right).$$
Notice that $\alpha_n \circ \beta_n = \beta_n \circ \alpha_n$, and that
$$\alpha_n \circ \beta_n(-n) = \tau^{m-n} \left( \eta_m^{1+2^{\kappa(m)}} \right) (-n)= m-n + \eta_m^{1+2^{\kappa(m)}}(-m) = -n$$
since $\mathrm{ord}(\eta_m,-m)=1+2^{\kappa(m)}$. This proves the first item of our claim. In order to prove the second item, notice that, for all $p \geq 1$ and $j<-n$, we have
$$\tau^{m-n}(\eta_m^p)(j) = m-n + \eta_m^p(j-m+n) = m-n+j-m+n = j$$
since $\eta_m$ fixes $(-\infty,-m)$ pointwise. Finally, notice that
$$\alpha(-n) = m-n + \eta_m(-m) \geq 0 \text{ and } \alpha^{-1}(-n) = m-n + \eta_m^{-1}(-m) \geq 0.$$
This concludes the proof of Claim~\ref{claim:AlphaBeta}. 

\begin{claim}\label{claim:MuNu}
For every $n \geq 1$, there exist $\mu_n,\nu_n \in Q$ such that
\begin{itemize}
	\item $\mu_n(-n) =-n+1$ and $\mu_n^{-1}(-n) \geq 0$;
	\item $\mu_n \circ \nu_n(-n)= \nu_n \circ \mu_n(-n)=-n$;
	\item and $\mu_n,\nu_n$ fix $(- \infty,-n)$ pointwise.
\end{itemize}
\end{claim}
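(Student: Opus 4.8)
The plan is to produce $\mu_n$ and $\nu_n$ together, as a matched pair, by combining the elements of Claim~\ref{claim:AlphaBeta} at the two consecutive levels $-n$ and $-n+1$ and gluing them with a transposition supported in $R_2$, which belongs to $\mathrm{Fix}(R_1) \subseteq Q$. Recall that Claim~\ref{claim:AlphaBeta} furnishes, for every $m \geq 1$, elements $\alpha_m,\beta_m \in Q$ fixing $(-\infty,-m)$ pointwise, with $\alpha_m(-m),\alpha_m^{-1}(-m) \in R_2$ and $\alpha_m \circ \beta_m(-m) = \beta_m \circ \alpha_m(-m) = -m$; in particular $\beta_m(-m) = \alpha_m^{-1}(-m) \in R_2$ and $\beta_m(\alpha_m(-m)) = -m$. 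Applying this also at the level $m-1$, and writing $a'_m := \alpha_{m-1}(-m+1) \in R_2$ and $a''_m := \alpha_m(-m) \in R_2$, I would set, for $m \geq 2$,
$$\tilde\mu_m := \beta_{m-1} \circ (a''_m,a'_m) \circ \alpha_m \qquad\text{and}\qquad \tilde\nu_m := \beta_m \circ (a'_m,a''_m) \circ \alpha_{m-1},$$
with the convention that the transposition $(a'_m,a''_m)$ is the identity when $a'_m = a''_m$. Each of $\tilde\mu_m,\tilde\nu_m$ is a product of three elements of $Q$ and fixes $(-\infty,-m)$ pointwise, and a direct computation — using that $\alpha_{m-1},\beta_{m-1},\beta_{m-1}^{-1}$ fix $-m$ and that the transposition fixes all of $R_1$ — gives $\tilde\mu_m(-m) = \beta_{m-1}(a'_m) = -m+1$, $\tilde\mu_m^{-1}(-m) = \alpha_m^{-1}(-m)$, $\tilde\nu_m(-m+1) = \beta_m(a''_m) = -m$ and $\tilde\nu_m(-m) = \beta_m(-m) = \alpha_m^{-1}(-m)$.

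Next, fixing $n_0 \geq 0$ with $\tau^{n_0}(Q^2) \subseteq Q$ and setting $K := \max(2n_0,2)$, I would put $\mu_n := \tau^K(\tilde\mu_{n+K})$ and $\nu_n := \tau^K(\tilde\nu_{n+K})$. Then $\mu_n,\nu_n \in Q$, since $\tau^{2n_0}(Q^3) \subseteq Q$ by Fact~\ref{fact:Bounded} and hence $\tau^K(Q^3) \subseteq Q$; and $\mu_n,\nu_n$ fix $(-\infty,-n)$ pointwise because $\tilde\mu_{n+K},\tilde\nu_{n+K}$ fix $(-\infty,-(n+K))$. Pushing the shift $\tau^K$ through the evaluations at $-(n+K)$ recorded above yields $\mu_n(-n) = K + (-(n+K)+1) = -n+1$, $\mu_n^{-1}(-n) = K + \alpha_{n+K}^{-1}(-(n+K)) \geq K \geq 0$, $\nu_n(-n+1) = K + (-(n+K)) = -n$, and $\nu_n(-n) = K + \alpha_{n+K}^{-1}(-(n+K)) = \mu_n^{-1}(-n)$. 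Consequently $\mu_n \circ \nu_n(-n) = \mu_n(\mu_n^{-1}(-n)) = -n$ and $\nu_n \circ \mu_n(-n) = \nu_n(-n+1) = -n$, which is all that the claim requires.

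I expect no serious obstacle here: the argument is really combinatorial bookkeeping once the right elements are assembled. The one point to watch is the choice of the gluing transposition, so that $\tilde\mu_m$ routes $-m \mapsto a''_m \mapsto a'_m \mapsto -m+1$ while $\tilde\nu_m$ routes $-m+1 \mapsto a'_m \mapsto a''_m \mapsto -m$, and the verification that the two expressions obtained for $\nu_n(-n)$ and for $\mu_n^{-1}(-n)$ actually coincide — which they do precisely because both collapse to $K + \alpha_{n+K}^{-1}(-(n+K))$, the underlying reason being that $\alpha_{m-1}$, $\beta_{m-1}$ and the $R_2$-transposition all fix the point $-m$. Taking the index $m = n+K$ with $K \geq 2$ guarantees $m-1 \geq 1$, so $\alpha_{m-1},\beta_{m-1}$ are always defined; in particular the case $n = 1$ (where $-n+1 = 0$ already lies in $R_2$) needs no separate treatment.
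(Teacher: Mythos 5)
Your construction is correct and is essentially the paper's own argument: both proofs sandwich an $R_2$-supported transposition (hence an element of $\mathrm{Fix}(R_1)\subset Q$) between an $\alpha$ and a $\beta$ furnished by Claim~\ref{claim:AlphaBeta}, and then push the triple product forward by a power of $\tau$ (justified by Fact~\ref{fact:Bounded} and $\tau(Q)\subset Q$) so as to land in $Q$ at the shifted index. The only difference is how the unit offset $-n\mapsto -n+1$ is engineered: the paper stays at a single level $n$ and twists one factor by $\tau$ (using $\tau(\beta_n)$, resp.\ $\tau(\alpha_n)$, glued by the transposition $(\alpha_n(-n),\alpha_n(-n)+1)$), whereas you invoke Claim~\ref{claim:AlphaBeta} at the two adjacent levels $m$ and $m-1$ and glue with $(\alpha_m(-m),\alpha_{m-1}(-m+1))$ -- a cosmetic variation with no effect on the substance of the proof.
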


\noindent
Using the elements given by Claim~\ref{claim:AlphaBeta}, we define
$$\mu_{n-2n_0}:= \tau^{2n_0} \left( \tau(\beta_n) \circ \left( \alpha_n(-n), \alpha_n(-n)+1 \right) \circ \alpha_n \right)$$
and 
$$\nu_{n-2n_0}:= \tau^{2n_0} \left( \beta_n \circ (\alpha_n(-n), \alpha_n(-n) +1) \circ \tau(\alpha_n) \right)$$
for every $n>2n_0$. Notice that $\mu_{n-2n_0}$ and $\nu_{n-2n_0}$ both belong to $Q$ since $\alpha_n,\beta_n \in Q$ and according to Fact~\ref{fact:Bounded}. Then, notice that
$$\begin{array}{lcl} \mu_{n-2n_0}^{-1}(-n+2n_0) & = & 2n_0 + \alpha_n^{-1} \circ \left( \alpha_n(-n), \alpha_n(-n)+1 \right) \circ \tau(\beta_n^{-1})(-n) \\ \\ &= & 2n_0 + \alpha_n^{-1} \circ \left( \alpha_n(-n), \alpha_n(-n)+1 \right) \left( 1+ \beta_n^{-1} \right)(-n-1)  \\ \\ & = & 2n_0 + \alpha_n^{-1} \circ \left( \alpha_n(-n), \alpha_n(-n)+1 \right) (-n) \\ \\ & = & 2n_0 + \alpha_n^{-1}(-n) >0 \end{array}$$
where the last equality is justified by the fact that, since $\alpha_n(-n) \geq 0$, $-n$ cannot be equal to $\alpha_n(-n)$ nor to $\alpha_n(-n)+1$. We also have
$$\begin{array}{lcl} \mu_{n-2n_0}(-n+2n_0) & = & 2n_0 + \tau(\beta_n) \circ \left( \alpha_n(-n), \alpha_n(-n) +1 \right) \circ \alpha_n(-n) \\ \\ & =& 2n_0+1+ \beta_n \circ \alpha_n(-n) = -n+2n_0+1 \end{array}$$
This proves the first item of our claim. Similarly, we can compute
$$\begin{array}{lcl} \nu_{n-2n_0} (-n+2n_0) & = & 2n_0 + \beta_n \circ \left( \alpha_n(-n), \alpha_n(-n)+1 \right) \circ \tau(\alpha_n) (-n) \\ \\ & =& 2n_0 + \beta_n \circ \left( \alpha_n(-n), \alpha_n(-n)+1 \right) (1+ \alpha_n(-n-1)) \\ \\ & = & 2n_0 + \beta_n \circ \left( \alpha_n(-n), \alpha_n(-n)+1 \right)(-n) = 2n_0 + \beta_n(-n) \end{array}$$
where the last equality is again justified by that fact, since $\alpha_n(-n) \geq 0$, $-n$ cannot be equal to $\alpha_n(-n)$ nor to $\alpha_n(-n)+1$. The two latter computations allow us to write
$$\begin{array}{lcl} \mu_{n-2n_0} \circ \nu_{n-2n_0} (-n+2n_0) & =& \mu_{n-2n_0} \left( 2n_0 + \beta_n(-n) \right) \\ \\ & = & 2n_0 + \tau(\beta_n) \circ \left( \alpha_n(-n), \alpha_n(-n)+1 \right) \circ \alpha_n \circ \beta_n(-n) \\ \\ & =& 2n_0+1+ \beta_n(-n-1) = -n+2n_0 \end{array}$$
and
$$\begin{array}{lcl} \nu_{n-2n_0} \circ \mu_{n-2n_0} (-n+2n_0) & = & \nu_{n-2n_0} \left( -n+2n_0+1 \right) \\ \\ & = & 2n_0 + \beta_n \circ \left( \alpha_n(-n), \alpha_n(-n)+1 \right) \circ \tau(\alpha_n) (-n+1) \\ \\ & = & 2n_0 + \beta_n \circ \left( \alpha_n(-n), \alpha_n(-n)+1 \right) (\alpha_n(-n)+1) \\ \\ & = & 2n_0 + \beta_n \circ \alpha_n(-n)=-n+2n_0 \end{array}$$
This proves the second item of our claim. Finally, using the fact that $\alpha_n$ and $\beta_n$ fix $(-\infty,-n)$ pointwise, it is clear that
$$\begin{array}{lcl} \mu_{n-2n_0}(j) & = & 2n_0 + \tau(\beta_n) \circ \left( \alpha_n(-n), \alpha_n(-n)+1 \right) \circ \alpha_n(j-2n_0)  \\ \\ & = & 2n_0+ \tau(\beta_n) (j-2n_0) = 2n_0 +1 + \beta_n( j-2n_0-1) = j, \end{array}$$
and similarly $\nu_{n-2n_0}(j)=j$, for every $j<-n+2n_0$. This concludes the proof of Claim~\ref{claim:MuNu}. 

\begin{claim}\label{claim:Penultimate}
For every $n \geq 1$, there exists $\sigma_n \in Q$ such that
\begin{itemize}
	\item $\sigma_n$ switches $-n$ and $-n+1$;
	\item $\sigma_n$ is the identity on $(- \infty, -n)$.
\end{itemize}
\end{claim}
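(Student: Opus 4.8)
The plan is to obtain $\sigma_n$ by a single correction of the element $\mu_n$ from Claim~\ref{claim:MuNu}, rather than by another iteration. Write $p_k:=\mu_k^{-1}(-k)$, so that the relation $\mu_k\circ\nu_k(-k)=-k$ of Claim~\ref{claim:MuNu} reads $\nu_k(-k)=p_k$, and recall from that claim that $p_k\geq 0$, that $\mu_k(-k)=-k+1$, and that $\mu_k,\nu_k$ fix $(-\infty,-k)$ pointwise. The starting remark is that $\mu_n$ already fixes $(-\infty,-n)$ pointwise and sends $-n\mapsto -n+1$; since the wanted $\sigma_n$ need only fix $(-\infty,-n)$ pointwise and interchange $-n$ and $-n+1$ --- with no constraint whatsoever on the remaining points --- it is enough to find $\rho_n\in Q$ fixing $(-\infty,-n]$ pointwise with $\rho_n(-n+1)=p_n$. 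Indeed $\sigma_n:=\mu_n\circ\rho_n$ then fixes $(-\infty,-n)$ pointwise, sends $-n\mapsto\mu_n(-n)=-n+1$, and sends $-n+1\mapsto\mu_n(p_n)=-n$.

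The key step is the construction of $\rho_n$, and here I would use $\nu_{n-1}$: for $n\geq 2$ the element $\nu_{n-1}$ fixes $(-\infty,-(n-1))=(-\infty,-n]$ pointwise and satisfies $\nu_{n-1}(-n+1)=\nu_{n-1}(-(n-1))=\mu_{n-1}^{-1}(-(n-1))=p_{n-1}\in R_2$, while for $n=1$ one sets $\nu_0:=\mathrm{id}$ and $p_0:=0$, which obey the same relations. Then $\rho_n:=(p_{n-1},p_n)\circ\nu_{n-1}$ does the job: the transposition $(p_{n-1},p_n)$ --- read as the identity when $p_{n-1}=p_n$ --- lies in $\mathrm{Fix}(R_1)\subseteq Q$ because $p_{n-1},p_n\in R_2$, so it fixes all of $R_1$ and sends $p_{n-1}$ to $p_n$; consequently $\rho_n$ fixes $(-\infty,-n]$ pointwise and $\rho_n(-n+1)=(p_{n-1},p_n)(p_{n-1})=p_n$. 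Hence the element $\mu_n\circ(p_{n-1},p_n)\circ\nu_{n-1}$ has exactly the prescribed action on $(-\infty,-n+1]$: it fixes $(-\infty,-n)$ pointwise and swaps $-n$ and $-n+1$.

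The only point still to settle is membership in $Q$, since a priori the element above lies only in $Q^3$. This is handled in the usual way: fixing $n_0\geq 0$ with $\tau^{n_0}(Q^2)\subseteq Q$, Fact~\ref{fact:Bounded} with $k=3$ gives $\tau^{2n_0}(Q^3)\subseteq Q$, so one declares $\sigma_n$ to be $\tau^{2n_0}$ applied to the element $\mu_m\circ(p_{m-1},p_m)\circ\nu_{m-1}$ at index $m:=n+2n_0$, and verifies by the standard one-line shift computation that conjugating by $t^{2n_0}$ preserves both ``fixes $(-\infty,-n)$ pointwise'' and ``swaps $-n$ and $-n+1$''. I do not expect any real obstacle; the only thing requiring care is the bookkeeping with the index shift (and the conventions $\nu_0:=\mathrm{id}$, $p_0:=0$, which make the construction uniform for all $n\geq 1$). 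The reason this last correction is a single step, and not yet another recursion as in Claims~\ref{claim:Delta}--\ref{claim:MuNu}, is precisely the strength of Claim~\ref{claim:MuNu}: it is the relation $\mu_k\circ\nu_k(-k)=-k$ that pins down $\nu_{n-1}(-n+1)=p_{n-1}$ exactly, so that the single transposition $(p_{n-1},p_n)$ supported in $R_2$ suffices to land on $p_n$.
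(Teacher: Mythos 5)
Your proposal is correct and follows essentially the same route as the paper: both proofs build the desired element as a product of three elements of $Q$ --- one of the $\mu$'s from Claim~\ref{claim:MuNu}, a transposition between two points of $R_2$ (hence lying in $\mathrm{Fix}(R_1)\subset Q$), and one of the $\nu$'s --- and then apply $\tau^{2n_0}$ with an index shift, invoking Fact~\ref{fact:Bounded} to land back in $Q$. The only difference is cosmetic: the paper uses $\tau(\mu_n)\circ\left(\mu_n^{-1}(-n),\mu_n^{-1}(-n)+1\right)\circ\nu_n$ at a single index (exploiting both relations $\mu_n\circ\nu_n(-n)=\nu_n\circ\mu_n(-n)=-n$), whereas you use $\mu_n\circ(p_{n-1},p_n)\circ\nu_{n-1}$ with adjacent indices (plus the $n=1$ convention), which works just as well.
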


\noindent
Using the elements given by Claim~\ref{claim:MuNu}, we define
$$\sigma_{n-2n_0} := \tau^{2n_0} \left( \tau(\mu_n) \circ \left( \mu_n^{-1}(-n), \mu_n^{-1}(-n)+1 \right) \circ \nu_n \right)$$
for every $n>2n_0$. Notice that $\sigma_{n-2n_0}$ belongs to $Q$ since $\mu_n,\nu_n \in Q$ and according to Fact~\ref{fact:Bounded}. We have
$$\begin{array}{lcl} \sigma_{n-2n_0}(-n+2n_0) & =& 2n_0 + \tau(\mu_n) \circ \left( \mu_n^{-1}(-n), \mu_n^{-1}(-n)+1 \right) \circ \nu_n(-n) \\ \\ &= & 2n_0 + \tau(\mu_n) \circ \left( \mu_n^{-1}(-n), \mu_n^{-1}(-n)+1 \right) \circ \mu_n^{-1}(-n) \\ \\ &= & 2n_0 + \tau(\mu_n) \left( \mu_n^{-1}(-n) +1 \right)  = 2n_0 +1 + \mu_n(\mu_n^{-1}(-n))  \\ \\ & = & -n+2n_0+1 \end{array}$$
and
$$\begin{array}{lcl} \sigma_{n-2n_0}(-n+2n_0+1) & = & 2n_0 + \tau(\mu_n) \circ \left( \mu_n^{-1}(-n), \mu_n^{-1}(-n) +1 \right) \circ \nu_n(-n+1) \\ \\ & = & 2n_0 + \tau(\mu_n) \circ \left( \mu_n^{-1}(-n), \mu_n^{-1}(-n) +1 \right) \circ \nu_n( \mu_n(-n)) \\ \\ & = & 2n_0 + \tau(\mu_n) \circ \left( \mu_n^{-1}(-n), \mu_n^{-1}(-n) +1 \right) (-n) \\ \\ & =& 2n_0+1+ \mu_n(-n-1) = -n+2n_0  \end{array}$$
where the penultimate equality is justified by the fact that, since $\mu_n^{-1}(-n) \geq 0$, $-n$ cannot be equal to $\mu_n^{-1}(-n)$ nor to $\mu_n^{-1}(-n)+1$. This proves the first item of our claim. Next, notice that, for every $j<-n+2n_0$, we have
$$\begin{array}{lcl} \sigma_{n-2n_0}(j) & = & 2n_0 + \tau(\mu_n) \circ \left( \mu_n^{-1}(-n), \mu_n^{-1}(-n)+1 \right) \circ \nu_n(j-2n_0) \\ \\ & =& 2n_0 + \tau(\mu_n) (j-2n_0) = 2n_0+1+\mu_n(j-2n_0-1) = j\end{array}$$
This concludes the proof of Claim~\ref{claim:Penultimate}. 

\begin{claim}\label{claim:Final}
For all $n \geq 1$ and $i \geq -1$, there exists $\sigma_n^{(i)} \in Q$ such that
\begin{itemize}
	\item $\sigma^{(i)}_n$ switches $-n$ and $-n+1$;
	\item $\sigma_n^{(i)}$ is the identity on $(-\infty,-n) \cup [-n+2, -n+2+i]$.
\end{itemize}
\end{claim}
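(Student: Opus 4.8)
The plan is to argue by induction on $i \ge -1$, the base case $i = -1$ being exactly Claim~\ref{claim:Penultimate}: indeed $[-n+2,-n+1] = \emptyset$, so $\sigma_n^{(-1)} := \sigma_n$ works for every $n \ge 1$.

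For the inductive step, assume $\sigma_n^{(i)} \in Q$ has been produced for every $n \ge 1$. Put $c := 4n_0$; since $n-c$ ranges over all positive integers as $n$ ranges over $\{c+1,c+2,\dots\}$, it suffices to construct $\sigma_{n-c}^{(i+1)}$ for each $n > c$. I will start from $g := \sigma_n^{(i)}$ and modify it, through a product of at most five elements of $Q$, into an element $h_n$ that additionally fixes $p := -n+3+i$ (the point immediately to the right of the fixed interval $[-n+2,-n+2+i]$) while still switching $-n$ and $-n+1$ and still being the identity on $(-\infty,-n) \cup [-n+2,p-1]$; then I set $\sigma_{n-c}^{(i+1)} := \tau^{c}(h_n)$. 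Since $h_n \in Q^{\le 5}$, Fact~\ref{fact:Bounded} yields $\tau^{c}(h_n) \in Q$, and since $\tau$ translates by one unit, $\tau^c(h_n)$ switches $-(n-c)$ and $-(n-c)+1$ and is the identity on $(-\infty,-(n-c)) \cup [-(n-c)+2,-(n-c)+2+(i+1)]$, as required.

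To build $h_n$, the key observation is that, because $\{-n,-n+1\}$ is a $2$-cycle of $g$ and $(-\infty,-n) \cup [-n+2,p-1]$ is fixed pointwise by $g$, the $g$-orbit of $p$ is a single cycle contained in $\{p\} \cup (p,+\infty)$. If $g(p)=p$, take $h_n := g$. If $p \ge 0$, this orbit lies in $R_2$; writing $a := g^{-1}(p) \in R_2$, the transposition $(a,p)$ lies in $\mathrm{Fix}(R_1) \subseteq Q$, and $h_n := g \circ (a,p) \in Q^2$ fixes $p$ and, as $\{a,p\}$ is disjoint from $\{-n,-n+1\}$, from $(-\infty,-n]$, and from $[-n+2,p-1]$, retains the other properties. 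The substantial case is $p<0$, i.e.\ $i \le n-4$, so that $m := -p = n-3-i \ge 1$ and $p = -m \in R_1$; here no transposition touching $p$ is available in $Q$, so instead I use the pair $\alpha_m,\beta_m \in Q$ of Claim~\ref{claim:AlphaBeta}, which fix $(-\infty,-m)=(-\infty,p)$ pointwise (hence fix $-n$, $-n+1$ and $[-n+2,p-1]$), satisfy $\alpha_m\beta_m(-m)=-m$, and for which $\alpha_m^{-1}(-m)=\beta_m(-m) \in R_2$. Choose $P \in R_2$ larger than every element of $\mathrm{supp}(g) \cup \mathrm{supp}(\alpha_m) \cup \mathrm{supp}(\beta_m) \cup \{\alpha_m^{-1}(-m)\}$ and set
$$\alpha_m' := \alpha_m \circ (\alpha_m^{-1}(-m),\,P), \qquad \beta_m' := (\alpha_m^{-1}(-m),\,P) \circ \beta_m .$$
Both lie in $Q^2$ (the transposition being supported in $R_2$, hence in $\mathrm{Fix}(R_1) \subseteq Q$), both fix $(-\infty,p)$ pointwise, and one checks $\beta_m'(-m)=P$ and $\alpha_m'(P)=-m$. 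Then $h_n := \alpha_m' \circ g \circ \beta_m' \in Q^5$ fixes $p$ — because $\beta_m'(p)=P \notin \mathrm{supp}(g)$, so $g(P)=P$, and $\alpha_m'(P)=p$ — while still switching $-n$ and $-n+1$ and fixing $(-\infty,-n) \cup [-n+2,p-1]$, all three factors being trivial there.

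I expect the crux to be exactly this last case: the obstacle is that $Q$ is not a subgroup, so one cannot conjugate by an element that moves $p$ into $R_2$; the device above circumvents this by using the controlled pair $(\alpha_m,\beta_m)$ of Claim~\ref{claim:AlphaBeta} in place of "an element and its inverse" and by pushing all auxiliary points into $R_2$, where transpositions are freely available in $\mathrm{Fix}(R_1) \subseteq Q$. Once Claim~\ref{claim:Final} is established, for $i$ large the interval $[-n+2,-n+2+i]$ forces $\sigma_n^{(i)}$ to agree with the transposition $(-n,-n+1)$ away from $R_2$, so $(-n,-n+1) \in Q^2$; together with $\mathrm{Fix}(R_1) \subseteq Q$ and $\tau^{n_0}(Q^2) \subseteq Q$ this yields $Q = \mathfrak{S}_\infty$, contradicting the strictness assumed in Proposition~\ref{prop:StrictCofining}, which is the point of the whole construction.
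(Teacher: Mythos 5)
Your induction is correct, but your inductive step takes a genuinely different route from the paper's. The paper also reduces to making the new point $-n+2+i$ fixed, but it does so by first pushing its image into $R_1\cup R_2$ with a $\mathrm{Fix}(R_1)$-transposition and a shift, and then, when the image is some negative $-k$, composing with the long chain $\sigma^{(i-1)}_{an_0+n-2-i}\circ\cdots\circ\sigma^{(i-1)}_{an_0+k+1}$ of previously constructed elements, which walks $-k$ back to $-n+2+i$ one unit at a time; membership in $Q$ then comes from Fact~\ref{fact:Bounded} applied to an unboundedly long product, compensated by a correspondingly large power of $\tau$, with the index dropping by $n_0$ per step. You instead repair the permutation at the single point $p=-n+3+i$ using a product of at most five elements of $Q$: one $\mathrm{Fix}(R_1)$-transposition when $p\geq 0$, and otherwise the sandwich $\alpha_m'\circ g\circ\beta_m'$ built from the commuting pair of Claim~\ref{claim:AlphaBeta} and an $R_2$-supported transposition routing $p$ through a point $P$ outside $\mathrm{supp}(g)$; a uniform shift $\tau^{4n_0}$ then suffices. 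Your route reuses Claim~\ref{claim:AlphaBeta} directly (the paper only exploits it through Claims~\ref{claim:MuNu} and~\ref{claim:Penultimate}) and buys bounded-length products and lighter index bookkeeping, whereas the paper's step only consumes the inductively built elements themselves; both are valid. Two small imprecisions you should fix in the write-up: elements of $Q\subset\mathfrak{S}_\infty$ may move points of the other rays $R_3,\dots,R_n$, so the $g$-orbit of $p$ need not lie in $R_2$ (nor in $\{p\}\cup(p,+\infty)$), and ``larger than every element of the support'' should read ``outside the (finite) supports and distinct from $\alpha_m^{-1}(-m)$''. Neither affects the argument: when $p\geq 0$ the fixed interval contains $[-n+2,-1]$, so $g$ preserves $R_1$ and hence $a=g^{-1}(p)\notin R_1$, which is all that is needed for $(a,p)\in\mathrm{Fix}(R_1)\subset Q$.
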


\noindent
We argue by induction over $i$. For $i=-1$, the existence of the $\sigma_n^{(i)}$ is given by Claim~\ref{claim:Penultimate}. From now on, assume that $i \geq 0$ and that $\sigma_n^{(i-1)}$ is already defined for every $n \geq 1$. 

\medskip \noindent
First of all, notice that we can assume that $\sigma_n^{(i-1)}(-n+2+i)$ belongs to $R_1 \cup R_2$ for every $n  \geq  1$. Indeed, for every $n \geq 1$, let $\pi_n$ denote an arbitrary transposition between $\sigma_n^{(i-1)}(-n+2+i)$ and a point of $R_2$ if $\sigma_n^{(i-1)}(-n+2+i) \notin R_2$. Otherwise, just set $\pi_n:= \mathrm{id}$. Then, 
$$\widetilde{\sigma}_n^{(i-1)} := \tau^{n_0} \left( \pi_{n+n_0} \circ \sigma_{n+n_0}^{(i-1)} \right)$$
is a permutation that belongs to $\tau^{n_0}(Q^2) \subset Q$, that switches $-n$ and $-n+1$, that is the identity on $(- \infty, -n) \cup [-n+2,-n+1+i]$, and that sends $-n+2+i$ in $R_1 \cup R_2$. From now on, we assume that $\sigma_n^{(i-1)}(-n+2+i)$ belongs to $R_1 \cup R_2$ for every $n  \geq  1$. 

\medskip \noindent
We fix an $n >n_0$ and we want to define an element $\sigma_{n-n_0}^{(i)}$ of $Q$ that switches $-n+n_0$ with $-n+n_0+1$ and that is the identity on $(-\infty,-n+n_0) \cup [-n+n_0+2, -n+n_0+2+i]$. 

\medskip \noindent
If $\sigma_n^{(i-1)}$ fixes $-n+2+i$, then we define
$$\sigma_{n-n_0}^{(i)}:= \tau^{n_0} \left( \sigma_n^{(i-1)} \right).$$
This is clearly an element of $Q$ that satisfies the conditions we are looking for. 

\medskip \noindent
From now on, assume that $\sigma_n^{(i-1)}$ does not fix $-n+2+i$. This implies that 
$$\sigma_n^{(i-1)}(-n+2+i) >-n+2+i.$$ 
For short, we set
$$k:= - \sigma_n^{(i-1)}(-n+2+i) \text{ and } a:= n-k-3-i.$$
The inequality above implies that $k<n-2-i$ and $a \geq 0$. We can define
$$\sigma_{n-n_0}^{(i)} := \tau^{n_0} \left( \tau^{an_0} \left( \sigma_{an_0+n-2-i}^{(i-1)} \circ \cdots \circ \sigma_{an_0+k+1}^{(i-1)} \right) \circ \sigma_n^{(i-1)} \right).$$
As a consequence of Fact~\ref{fact:Bounded}, this is an element of $Q$. Moreover, since the equality
$$\sigma_p^{(i-1)} \circ \cdots \circ \sigma_q^{(i-1)} (-q+1)=-p$$
holds for all $p \geq q$, we can write
$$\sigma_{an_0+n-2-i}^{(i-1)} \circ \cdots \circ \sigma_{an_0+k+1}^{(i-1)} (-an_0-k)= -an_0-n+2+i.$$
It follows that 
$$\tau^{an_0} \left( \sigma_{an_0+n-2-i}^{(i-1)} \circ \cdots \circ \sigma_{an_0+k+1}^{(i-1)} \right) (-k)= -n+2+i,$$
or equivalently
$$\tau^{an_0} \left( \sigma_{an_0+n-2-i}^{(i-1)} \circ \cdots \circ \sigma_{an_0+k+1}^{(i-1)} \right) \circ \sigma_n^{(i-1)} (-n+2+i)= -n+2+i.$$
Therefore, $\sigma_{n-n_0}^{(i)}$ fixes $-n+n_0+2+i$. Next, given a $j \in (-\infty,-2) \cup [0,i)$, we have
$$\begin{array}{lcl} \sigma_{n-n_0}^{(i)}(-n+n_0+2+j) & = & n_0 + \tau^{an_0} \left( \sigma_{an_0+n-2-i}^{(i-1)} \circ \cdots \circ \sigma_{an_0+k+1}^{(i-1)} \right) \circ \sigma_n^{(i-1)} (-n+2+j) \\ \\ &= & n_0 + \tau^{an_0} \left( \sigma_{an_0+n-2-i}^{(i-1)} \circ \cdots \circ \sigma_{an_0+k+1}^{(i-1)} \right)  (-n+2+j) \\ \\ & = & n_0+an_0 + \sigma_{an_0+n-2-i}^{(i-1)} \circ \cdots \circ \sigma_{an_0+k+1}^{(i-1)} (-an_0-n+2+j) \end{array}$$
Notice that, in this expression, the composition contains the $\sigma_{an_0+x}^{(i-1)}$ for $k+1 \leq x \leq n-2-i$. But $-an_0-n+2+j$ always belong to $(-\infty,-an_0-x)$ since $j < i \leq n-x-2$. Therefore, we have
$$\sigma_{n-n_0}^{(i)} (-n+n_0+2+j) = -n+n_0+2 + j.$$
So far, we have proved that $\sigma_{n-n_0}^{(i)}$ is the identity on $(-\infty,-n+n_0) \cup [-n+n_0+2, -n+n_0+2+i]$. It remains to verify that it switches $-n+n_0$ with $-n+n_0+1$. The computation is the same as above, namely:
$$\begin{array}{lcl} \sigma_{n-n_0}^{(i)}(-n+n_0) & = & n_0 + \tau^{an_0} \left( \sigma_{an_0+n-2-i}^{(i-1)} \circ \cdots \circ \sigma_{an_0+k+1}^{(i-1)} \right) \circ \sigma_n^{(i-1)} (-n) \\ \\ &= & n_0 + \tau^{an_0} \left( \sigma_{an_0+n-2-i}^{(i-1)} \circ \cdots \circ \sigma_{an_0+k+1}^{(i-1)} \right)  (-n+1) \\ \\ & = & n_0+an_0 + \sigma_{an_0+n-2-i}^{(i-1)} \circ \cdots \circ \sigma_{an_0+k+1}^{(i-1)} (-an_0-n+1) \\ \\ & = & -n+n_0 +1 \end{array}$$
where the last equality is justified by the fact that $-an_0-n+1$ is fixed by $\sigma_{an_0+x}^{(i-1)}$ for every $k+1 \leq x \leq n-2-i$; and we obtain $\sigma_{n-n_0}^{(i)}(-n+n_0+1) = -n+n_0$ similarly. This concludes the proof of Claim~\ref{claim:Final}. 

\medskip \noindent
We are now ready to conclude the proof of proposition. First, notice that, for every $n \geq 1$, we have
$$(-n, -n+1) = \tau^{n_0} (( -n-n_0, -n-n_0+1)) \in \tau \left( \sigma_{n+n_0}^{(n+n_0-2)} Q \right) \subset \tau^{n_0}(Q^2) \subset Q,$$
where $\sigma_{n+n_0}^{(n+n_0-2)} $ is given by Claim~\ref{claim:Final}. Thus, $Q$ contains all the transpositions between two consecutive vertices of $R_1$. As a consequence, if $\sigma$ is an arbitrary permutation supported on $R_1 \cup R_2$, we can write $\sigma = \sigma_1 \cdots \sigma_r$ where $\sigma_1, \ldots, \sigma_r$ are transpositions between consecutive vertices in $R_1 \cup R_2$, hence
$$\sigma = \tau^{pn_0} \left( \tau^{-pn_0}(\sigma_1) \circ \cdots \circ \tau^{-pn_0}(\sigma_r) \right) \in \tau^{pn_0} \left( Q^r \right) \subset Q$$
according to Fact~\ref{fact:Bounded}, where $p \geq r$ is chosen sufficiently large so that each $\tau^{-pn_0}(\sigma_i)$ is supported on $R_1$. Thus, every permutation supported on $R_1 \cup R_2$ belongs to $Q$.

\medskip \noindent
Finally, let $\xi \in \mathfrak{S}_\infty$ be an arbitrary permutation. Let $A \subset R_1$ be the set of the points of $R_1$ that are sent to $R_1^c$ by $\xi$, and $B \subset R_1^c$ the set of the points of $R_1^c$ that are sent to $R_1$ by $\xi$. Necessarily, $A$ and $B$ have the same cardinality, say $N$. Fix an enumeration $\{a_1, \ldots, a_N\}$ (resp.\ $\{b_1, \ldots, b_N \}$) of $A$ (resp.\ $B$), and $N$ points $x_1, \ldots, x_N$ (resp.\ $y_1, \ldots, y_N$) in $R_1$ (resp.\ $R_2$). Set
$$\mu := (a_1, x_1) \cdots (a_N,x_N) (b_1, y_1) \cdots (b_N, y_N)$$
and 
$$\nu: = (\xi(a_1),y_1) \cdots (\xi(a_N),y_N) (\xi(b_1), x_1) \cdots (\xi(b_N), x_N).$$
Notice that $\mu$ (resp.\ $\nu$) is the product of a permutation supported on $R_1$ with a permutation fixing $R_1$ pointwise. Now, $\nu \circ \xi \circ \mu$ exchanges $X$ and $Y$, stabilises $R_1 \backslash X$, and stabilises $R_1^c \backslash Y$. Consequently, $\nu \circ \xi \circ \mu$ can be written as the product of a permutation supported on $R_1 \cup R_2$ (which exchanges $X$ and $Y$), a permutation supported on $R_1$, and a permutation fixing $R_1$ pointwise. Hence
$$\begin{array}{lcl} \mathfrak{S}_\infty &  = & \left( \mathrm{Fix}(R_1) \mathrm{Sym}(R_1) \right) \cdot \left( \mathrm{Sym}(R_1 \cup R_2) \mathrm{Sym}(R_1) \mathrm{Fix}(R_1) \right) \cdot \left( \mathrm{Fix}(R_1) \mathrm{Sym}(R_1) \right) \\ \\ & = & \mathrm{Fix}(R_1) \mathrm{Sym}(R_1 \cup R_2) \mathrm{Fix}(R_1) \mathrm{Sym}(R_1) =\mathrm{Fix}(R_1) \mathrm{Sym}(R_1 \cup R_2) \mathrm{Fix}(R_1) \end{array}$$
We record this observation for future use:

\begin{fact}\label{fact:SymBounded}
For all distinct $1 \leq i,j \leq n$, 
$$\mathfrak{S}_\infty=\mathrm{Fix}(R_i) \mathrm{Sym}(R_i \cup R_j) \mathrm{Fix}(R_i).$$
\end{fact}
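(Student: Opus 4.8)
The plan is to reduce everything to the case $i=1$, $j=2$, which has in fact already been proved. Indeed, the last paragraph of the proof of Proposition~\ref{prop:StrictCofining} --- the one that begins ``let $\xi \in \mathfrak{S}_\infty$ be an arbitrary permutation'' --- makes no use of the subset $Q$ or of the automorphism $\tau$: the sets $A \subset R_1$, $B \subset R_1^c$ and the auxiliary permutations $\mu,\nu$ form a purely combinatorial device inside $\mathfrak{S}_\infty$, and what that paragraph establishes is precisely
$$\mathfrak{S}_\infty = \mathrm{Fix}(R_1)\,\mathrm{Sym}(R_1 \cup R_2)\,\mathrm{Fix}(R_1).$$
So the first step is just to record that the case $(i,j)=(1,2)$ is settled.

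For general distinct $1 \le i,j \le n$, choose (using $n \ge 2$) a permutation $\rho$ of $\{1,\dots,n\}$ with $\rho(1)=i$ and $\rho(2)=j$, and regard it as the element of the symmetric subgroup of $\mathrm{QAut}(\mathcal{R}_n)$ permuting the rays of $\mathcal{R}_n$. Conjugation by $\rho$ permutes the vertex set $\mathcal{R}_n$, hence normalises $\mathfrak{S}_\infty$ and induces an automorphism of it satisfying $\mathrm{supp}(\rho \sigma \rho^{-1}) = \rho(\mathrm{supp}(\sigma))$ for every $\sigma \in \mathfrak{S}_\infty$. Consequently this automorphism sends the set of permutations supported on $R_1 \cup R_2$ onto the set supported on $R_i \cup R_j$, and the set of permutations with support disjoint from $R_1$ onto the set with support disjoint from $R_i$; that is, it carries $\mathrm{Sym}(R_1 \cup R_2)$ to $\mathrm{Sym}(R_i \cup R_j)$ and $\mathrm{Fix}(R_1)$ to $\mathrm{Fix}(R_i)$.

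Conjugating the displayed identity by $\rho$ then yields
$$\mathfrak{S}_\infty = \rho\,\mathfrak{S}_\infty\,\rho^{-1} = \mathrm{Fix}(R_i)\,\mathrm{Sym}(R_i \cup R_j)\,\mathrm{Fix}(R_i),$$
which is the assertion of the Fact. There is no real obstacle here; the only point that deserves a moment's attention is the one flagged above, namely that the factorisation argument at the end of the proof of Proposition~\ref{prop:StrictCofining} is genuinely independent of $Q$, so that it may legitimately be quoted as the base case and then transported by $\rho$. Equivalently, one may simply rerun that same argument with $R_i, R_j$ in place of $R_1, R_2$ and obtain the statement directly.
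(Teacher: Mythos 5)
Your proposal is correct and takes essentially the same route as the paper: the paper's own proof of this Fact is exactly the $\mu,\nu$ factorisation paragraph you cite, which (as you rightly observe) uses neither $Q$ nor $\tau$ and so legitimately settles the case $(i,j)=(1,2)$ without circularity. Your conjugation by a ray permutation $\rho$ simply makes explicit the symmetry the paper leaves implicit in stating the Fact for arbitrary distinct $i,j$, so nothing further is needed.
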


\noindent
Since we know that $\mathrm{Fix}(R_1)$ and $\mathrm{Sym}(R_1 \cup R_2)$ are both contained in $Q$, it follows that $\mathfrak{S}_\infty \subset Q^3$. Hence
$$\mathfrak{S}_\infty  = \tau^{3n_0}( \mathfrak{S}_\infty ) \subset \tau^{3n_0}(Q^3) \subset Q$$
according to Fact~\ref{fact:Bounded}. This concludes the proof of our proposition. 
\end{proof}

\begin{proof}[Proof of Theorem~\ref{thm:FocalTwo}.]
First, we know from Lemma~\ref{lem:SecondNotComparable} that the subsets provided by our theorem are indeed focal hyperbolic structures of $H_n(2)$, and that they are distinct. Next, let $\mathfrak{X}$ be a focal hyperbolic structure on $H_n(2)$. According to Corollary~\ref{cor:FocalDes}, there exists $Q \subset \mathfrak{S}_\infty$ such that $\tau$ or $\tau^{-1}$ is strictly confining $\mathfrak{S}_\infty$ and such that $\mathfrak{X} = [Q \cup \{t\}]$ or $[Q \cup \{ t^{-1} \}]$. If $\tau$ is strictly confining $\mathfrak{S}_\infty$, then it follows from Proposition~\ref{prop:StrictCofining} that $\mathfrak{X}= [\mathrm{Fix}(R_1) \cup \{t\}]$. Otherwise, if $\tau^{-1}$ is strictly confining $\mathfrak{S}_\infty$, then, by applying the automorphism of $H_n(2)$ induced by the automorphism of $\mathfrak{R}_n$ that switches $R_1$ and $R_2$ but fixes all the other rays pointwise, we deduce symmetrically from Proposition~\ref{prop:StrictCofining} that $\mathfrak{X}=[\mathrm{Fix}(R_2) \cup \{t^{-1}\}]$. 
\end{proof}

\subsection{Hyperbolic structures}\label{section:PartialHyp}

\noindent
Thanks to Theorem~\ref{thm:FocalTwo}, we are now ready to conclude the proof of Theorem~\ref{thm:SecondPartialHoughton}. 

\begin{proof}[Proof of Theorem~\ref{thm:SecondPartialHoughton}.]
We know from Theorem~\ref{thm:FocalTwo} that $H_n(2)$ has exactly two focal hyperbolic structures, namely $[\mathrm{Fix}(R_1) \cup \{t_{1,2}\}]$ and $[\mathrm{Fix}(R_2) \cup \{t_{1,2}^{-1}\}]$. They are not comparable according to Lemma~\ref{lem:SecondNotComparable}. As a consequence of Lemma~\ref{lem:NonOrientedLineal} and Fact~\ref{fact:NotOntoDinfty} below, the lineal hyperbolic structures of $H_n(2)$ are all oriented. Since $H_n(2)$ decomposes as a semidirect product $\mathfrak{S}_\infty \rtimes \mathbb{Z}$, it is clear that $H^1(H_n(2),\mathbb{R})$ is generated by the quotient map $H_n(2) \twoheadrightarrow H_n(2)/ \mathfrak{S}_\infty$. We deduce from Lemma~\ref{lem:LinealActions} that $H_n(2)$ has a single lineal action, namely $[\mathfrak{S}_\infty \cup \{t_{1,2}\}]$. The fact that the two focal structures dominate the lineal structure is clear.

\begin{fact}\label{fact:NotOntoDinfty}
For every $n \geq 2$, $H_n(2)$ does not surject onto $\mathbb{D}_\infty$.
\end{fact}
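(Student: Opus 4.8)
The plan is to exploit the semidirect product decomposition $H_n(2) \cong \mathfrak{S}_\infty \rtimes \mathbb{Z}$ used throughout this section: it forces the commutator subgroup of $H_n(2)$ to be a torsion group, whereas any surjection onto $\mathbb{D}_\infty$ would force that commutator subgroup to surject onto the infinite cyclic group $[\mathbb{D}_\infty,\mathbb{D}_\infty]$.

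\medskip\noindent
First I would record that $[H_n(2),H_n(2)] \subseteq \mathfrak{S}_\infty$. This is immediate from the decomposition $H_n(2) \cong \mathfrak{S}_\infty \rtimes \mathbb{Z}$, since the quotient $H_n(2)/\mathfrak{S}_\infty \cong \mathbb{Z}$ is abelian. Next, $\mathfrak{S}_\infty$ is locally finite: the subgroup generated by finitely many finitely supported permutations of $\mathcal{R}_n$ lies in the finite symmetric group on the (finite) union of their supports. In particular $\mathfrak{S}_\infty$, and hence $[H_n(2),H_n(2)]$, is a torsion group.

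\medskip\noindent
Then I would argue by contradiction. Suppose $\varphi \colon H_n(2) \twoheadrightarrow \mathbb{D}_\infty$ is a surjection. A surjective homomorphism maps commutator subgroup onto commutator subgroup, so $\varphi$ restricts to a surjection $[H_n(2),H_n(2)] \twoheadrightarrow [\mathbb{D}_\infty,\mathbb{D}_\infty]$. Now $\mathbb{D}_\infty$ is non-abelian, so $[\mathbb{D}_\infty,\mathbb{D}_\infty]$ is non-trivial; and it is contained in the index-two ``translation'' subgroup $\mathbb{Z} \leq \mathbb{D}_\infty$ (because $\mathbb{D}_\infty/\mathbb{Z} \cong \mathbb{Z}/2\mathbb{Z}$ is abelian), hence is a non-trivial subgroup of $\mathbb{Z}$, so it is infinite cyclic and torsion-free. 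A torsion group cannot surject onto a non-trivial torsion-free group, a contradiction, which proves the fact.

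\medskip\noindent
The only place one must be slightly careful is recognising that the naive first attempt fails: comparing abelianisations is not enough, since one computes $H_n(2)^{\mathrm{ab}} \cong \mathbb{Z} \times \mathbb{Z}/2\mathbb{Z}$ (conjugation by $t_{1,2}$ preserves the sign homomorphism on $\mathfrak{S}_\infty$), and this does admit a surjection onto $\mathbb{D}_\infty^{\mathrm{ab}} \cong \mathbb{Z}/2\mathbb{Z} \times \mathbb{Z}/2\mathbb{Z}$. The genuine content is therefore the remark that $[H_n(2),H_n(2)]$ is torsion, which is exactly where the ($\text{locally finite}$)-by-$\mathbb{Z}$ structure of $H_n(2)$ is used; everything else is routine.
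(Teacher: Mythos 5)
Your proof is correct, but it takes a genuinely different route from the paper's. You observe that $[H_n(2),H_n(2)]$ lies in the locally finite, hence torsion, group $\mathfrak{S}_\infty$, while a surjection onto $\mathbb{D}_\infty$ would carry the commutator subgroup onto $[\mathbb{D}_\infty,\mathbb{D}_\infty]\cong\mathbb{Z}$, which is non-trivial and torsion-free; the contradiction is immediate. The paper argues instead at the level of $\mathfrak{S}_\infty$ itself: since two distinct transpositions generate a finite dihedral group whereas two distinct involutions of $\mathbb{D}_\infty$ generate an infinite dihedral group, the image of $\mathfrak{S}_\infty$ under any morphism to $\mathbb{D}_\infty$ is trivial or of order two; this image is normal in the image of $H_n(2)$, and centralisers of involutions in $\mathbb{D}_\infty$ are finite, so the image of $H_n(2)$ is finite or cyclic, hence proper. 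Your argument is shorter and more general: it shows that no group with torsion commutator subgroup (in particular, no (locally finite)-by-abelian group) surjects onto $\mathbb{D}_\infty$, without using how $\mathfrak{S}_\infty$ is generated; the paper's argument, by contrast, never needs to identify or bound the commutator subgroup and instead exploits the involution structure of both groups. Your cautionary aside is also accurate: conjugation by $t_{1,2}$ preserves the sign character, so $H_n(2)^{\mathrm{ab}}\cong\mathbb{Z}\times\mathbb{Z}/2\mathbb{Z}$ (indeed $[H_n(2),H_n(2)]=\mathfrak{A}_\infty$), so comparing abelianisations cannot suffice; your proof only uses the inclusion $[H_n(2),H_n(2)]\subseteq\mathfrak{S}_\infty$, which is all that is needed.
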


\noindent
On the one hand, in $\mathbb{D}_\infty$, any two distinct order-$2$ elements generate an infinite dihedral group. On the other hand, the subgroup $\mathfrak{S}_\infty$ is generated by transpositions and any two distinct transpositions generate a finite dihedral group. Therefore, the image of every morphism $\mathfrak{S}_\infty \to \mathbb{D}_\infty$ is either trivial or cyclic of order two. Since the centraliser of an order-$2$ element in $\mathbb{D}_\infty$ is finite, the image of every morphism $H_n(2) \to \mathbb{D}_\infty$ must be either finite or infinite cyclic. In any case, the morphism cannot be surjective. 
\end{proof}

\section{Higher rank Houghton groups}

\noindent
Hyperbolic structures on the second Houghton group are understood thanks to Theorem~\ref{thm:SecondPartialHoughton}. In this section, we focus on Houghton groups $H_n$ for $n \geq 3$. 

\begin{thm}\label{thm:HoughtonHigher}
Let $n\geq 3$ be an integer. The Houghton group $H_n$ has exactly $n$ focal hyperbolic structures, namely
$$\mathcal{F}_i:= [\mathrm{Fix}(R_i) \cup T] \text{ for } 1 \leq i \leq n, \text{ where } T:= \{ t_{r,s}, 1 \leq r,s \leq n\};$$
which are pairwise non-comparable. Each $\mathcal{F}_i$ dominates a single lineal hyperbolic structure, namely $[ \mathrm{ker}(\lambda_i) \cup T].$ There are uncountably many lineal hyperbolic structures, which are all oriented and pairwise non-comparable.
\end{thm}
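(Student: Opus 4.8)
The statement has a focal part and a lineal part; I would treat the lineal part first, as it is essentially formal. By Lemma~\ref{lem:abelianisation} we have $H^1(H_n,\mathbb{R})\cong\mathbb{R}^{n-1}$, so $\Pi(H_n)$ is a real projective space of dimension $n-2\geq 1$, hence uncountable. Since $H_n$ is amenable it is agreeable, and Lemma~\ref{lem:LinealActions} gives a bijection $\Pi(H_n)\to\mathcal{H}_\ell^+(H_n)$; moreover $H_n$ does not surject onto $\mathbb{D}_\infty$ (same argument as for Fact~\ref{fact:NotOntoDinfty}: $[H_n,H_n]=\mathfrak{S}_\infty$ is generated by transpositions, and the centraliser of an involution in $\mathbb{D}_\infty$ is finite), so by Lemma~\ref{lem:NonOrientedLineal} there are no non-oriented lineal structures. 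This already yields uncountably many lineal structures. They are pairwise non-comparable: two non-proportional characters $\varphi_1,\varphi_2$ have distinct kernels, so one can choose $g\in\ker(\varphi_1)\setminus\ker(\varphi_2)$; then $g$ has bounded orbit for the structure attached to $\varphi_1$ but is loxodromic for the one attached to $\varphi_2$, which prevents the latter from being dominated by the former, and symmetrically.

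Next I would check that each $\mathcal{F}_i$ is a focal structure dominating $[\ker(\lambda_i)\cup T]$ and nothing else. Fixing $j\neq i$ and setting $t:=t_{i,j}$, one verifies the three facts $H_n=\ker(\lambda_i)\rtimes\langle t\rangle$, $\;t\,\mathrm{Fix}(R_i)\,t^{-1}\subsetneq\mathrm{Fix}(R_i)$, and $\ker(\lambda_i)=\bigcup_{m\geq0}t^{-m}\,\mathrm{Fix}(R_i)\,t^{m}$; hence $t$ is strictly confining $\ker(\lambda_i)$ into $\mathrm{Fix}(R_i)$, and Proposition~\ref{prop:ConfiningToHyp} shows $[\mathrm{Fix}(R_i)\cup\{t\}]$ is a focal structure of $H_n$ whose Busemann morphism is a negative multiple of $\lambda_i$. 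Every $t_{r,s}$ has finite $\mathrm{Fix}(R_i)\cup\{t\}$-length, so $[\mathrm{Fix}(R_i)\cup\{t\}]=[\mathrm{Fix}(R_i)\cup T]=\mathcal{F}_i$. The $\mathcal{F}_i$ are pairwise non-comparable because, choosing $k\notin\{i,j\}$ (possible as $n\geq3$), $t_{i,k}\in\mathrm{Fix}(R_j)$ is loxodromic for $\mathcal{F}_i$ and elliptic for $\mathcal{F}_j$. Finally $\mathcal{F}_i$ dominates $[\ker(\lambda_i)\cup T]$ since $\mathrm{Fix}(R_i)\subseteq\ker(\lambda_i)$, and no other lineal structure: domination forces the loxodromic elements to nest, so the character of the dominated structure vanishes on $\ker(\lambda_i)$, hence is proportional to $\lambda_i$, and Lemma~\ref{lem:LinealActions} pins the structure down.

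The heart of the matter is that there is no further focal structure. Let $\mathfrak{X}$ be a focal structure; as $H_n$ is agreeable the action is regular, with Busemann morphism $\beta$. The pivotal claim — the \emph{key observation} of the paper — is that there is a two-element $I\subseteq\{1,\dots,n\}$ with $H_n(I)$ cobounded in $(H_n,\mathfrak{X})$. To obtain it, apply the converse part of Proposition~\ref{prop:ConfiningToHyp}: it yields $h\in H_n$ of infinite order modulo $\mathfrak{S}_\infty=[H_n,H_n]$ and $Q\subseteq\mathfrak{S}_\infty$ with $\mathfrak{S}_\infty\rtimes\langle h\rangle$ cobounded in $(H_n,\mathfrak{X})$, $h$ strictly confining $\mathfrak{S}_\infty$ into $Q$, and $\beta(h)>0$. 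Confronting the confining conditions with the dynamics of $h$ on $\mathcal{R}_n$ (the point being that $h$ must, up to modification and conjugation, behave like a power of a single $t_{i,j}$, i.e.\ $\lambda(h)$ can be taken supported on a pair $\{i,j\}$), one gets $\mathfrak{S}_\infty\rtimes\langle h\rangle\leq H_n(\{i,j\})$, so that $H_n(\{i,j\})=:H_n(I)$, being a supergroup of a cobounded subgroup, is cobounded. \textbf{This extraction is the main obstacle}: it is essentially equivalent to showing that the Busemann morphism of a focal structure of $H_n$ is proportional to one of the $\lambda_i$'s.

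With the cobounded $H_n(I)$ in hand, the restriction $\mathfrak{X}|_{H_n(I)}$ is a hyperbolic structure on $H_n(I)\cong H_n(2)$; it is focal, because otherwise — $H_n(I)$ being cobounded and $H_n$ admitting no non-oriented lineal structure — $\mathfrak{X}$ itself would be bounded or lineal. By Theorem~\ref{thm:SecondPartialHoughton}, $\mathfrak{X}|_{H_n(I)}$ equals $[\mathrm{Fix}(R_i)\cup\{t_{i,j}\}]$ or $[\mathrm{Fix}(R_j)\cup\{t_{i,j}^{-1}\}]$; relabelling $i$ and $j$ if needed, say the former. Comparing the Busemann morphism of $\mathfrak{X}|_{H_n(I)}$ with $\beta|_{H_n(I)}$ then forces $\beta$ to be a negative multiple of $\lambda_i$, so that every $t_{r,s}$ with $r,s\neq i$ is elliptic for $\mathfrak{X}$; Corollary~\ref{cor:BoundedOrbits} — applied to $\mathrm{Fix}(R_i)\cong H_{n-1}$, whose commutator subgroup consists of finitely supported permutations, which are bounded in $\mathfrak{X}$ because they lie in the generating set of $\mathfrak{X}|_{H_n(I)}$ — shows that $\mathrm{Fix}(R_i)$ is bounded in $(H_n,\mathfrak{X})$. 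Since $H_n=\mathrm{Fix}(R_i)\cdot H_n(I)$ and $\mathcal{F}_i$ satisfies exactly the same properties ($H_n(I)$ cobounded, with restriction $[\mathrm{Fix}(R_i)\cup\{t_{i,j}\}]$, and $\mathrm{Fix}(R_i)$ bounded), a bounded-generation comparison of word lengths identifies $\mathfrak{X}$ with $\mathcal{F}_i$. Hence $H_n$ has exactly the $n$ focal structures $\mathcal{F}_1,\dots,\mathcal{F}_n$, and the theorem follows.
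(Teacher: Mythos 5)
Your skeleton is the paper's (restrict a focal structure $\mathfrak{X}$ to a cobounded second partial Houghton subgroup, apply Theorem~\ref{thm:SecondPartialHoughton}, then compare word lengths), and the routine parts (each $\mathcal{F}_i$ is focal via confining, pairwise non-comparable, dominates only $[\ker(\lambda_i)\cup T]$, uncountably many oriented lineal structures) are essentially right. But the two steps you leave unproven are genuine gaps, and in the first case the idea you propose is not the right one. For the coboundedness of some $H_n(I)$ with $|I|=2$, you should not try to take the element $h$ supplied by the converse of Proposition~\ref{prop:ConfiningToHyp} and argue that it ``behaves like a power of a single $t_{i,j}$'': nothing forces $\lambda(h)$ to be supported on two rays, so that claim is unjustified (and false as stated), and it is certainly not needed that $\beta$ be proportional to some $\lambda_i$ at this stage --- that is a conclusion of the theorem, not an input. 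The paper's mechanism is Lemma~\ref{lemma:Cobounded}: since the $t_{r,s}$ generate the abelianisation, $\beta(t_{i,j})\neq 0$ for some pair, and then $\langle[H_n,H_n],t_{i,j}\rangle=H_n(\{i,j\})$ is cobounded immediately, with no dynamical analysis of $h$. So what you call ``the main obstacle'' dissolves, but as written your proposal does not establish it.

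The second gap is the assertion that comparing the Busemann morphism of $\mathfrak{X}|_{H_n(I)}$ with $\beta|_{H_n(I)}$ ``forces $\beta$ to be a negative multiple of $\lambda_i$''. This is a non sequitur: $H_n(I)$ has abelianisation $\mathbb{Z}$, so the restriction only records $\beta(t_{i,j})$ and says nothing about $\beta(t_{r,s})$ for pairs not contained in $\{i,j\}$ (for instance $\beta=-\lambda_i+100\lambda_k$ restricts to $H_n(\{i,j\})$ exactly as $-\lambda_i$ does). Hence the ellipticity of the $t_{r,s}$ with $r,s\neq i$, which you need before invoking Corollary~\ref{cor:BoundedOrbits} to bound $\mathrm{Fix}(R_i)$, is not established; this is precisely the paper's Claim~\ref{claim:Tbounded}, proved by contradiction: if some such $t_{r,s}$ were loxodromic, a second application of Lemma~\ref{lemma:Cobounded} and Theorem~\ref{thm:FocalTwo} would make $\mathrm{Fix}(R_r)$ (or $\mathrm{Fix}(R_s)$) bounded, and then Fact~\ref{fact:SymBounded} with Lemma~\ref{lem:BoundedOrbits} would bound all of $\mathfrak{S}_\infty=[H_n,H_n]$, contradicting focality by Lemma~\ref{lem:CommSubEll}. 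Once these two gaps are filled, your concluding ``bounded-generation comparison'' can indeed be made rigorous and replaces the paper's explicit $4$-Lipschitz retraction (Claim~\ref{claim:Lip}): $H_n=\mathrm{Fix}(R_i)\cdot H_n(I)$ with $\mathrm{Fix}(R_i)$ bounded and $H_n(I)$ cobounded carrying the same induced structure determines the metric up to quasi-isometry, and $\mathcal{F}_i$ enjoys the same three properties. One smaller point: for non-comparability of lineal structures, ``non-proportional characters have distinct kernels'' fails for $n\geq 3$ (e.g.\ $\lambda_1+\sqrt{2}\lambda_2$ and $\lambda_1+2\sqrt{2}\lambda_2$ share the kernel $\ker\lambda_1\cap\ker\lambda_2$); compare unbounded values of the characters themselves, or cite \cite[Theorem~4.22]{MR3995018} as the paper does.
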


\noindent
Recall from Section~\ref{section:Preliminaries} that, for $1 \leq i \leq n$, $\lambda_i$ denotes the morphism $H_n \twoheadrightarrow \mathbb{Z}$ that records the eventual algebraic translation length of an element along the ray $R_i$.

\subsection{Possible focal actions}

\noindent
Towards the proof of Theorem~\ref{thm:HoughtonHigher}, we start by describing the possible focal hyperbolic structures on our Houghton group.

\begin{prop}\label{prop:FocalGeneral}
For every $n \geq 3$, $H_n$ has exactly $n$ focal hyperbolic structures, namely
$$ \left[ \mathrm{Fix}(R_i) \cup T \right] \text{ for } 1 \leq i \leq n, \text{ where } T:= \{ t_{r,s}, 1 \leq r,s \leq n\}.$$ 
\end{prop}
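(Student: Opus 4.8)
The plan is to first verify that each $[\mathrm{Fix}(R_i) \cup T]$ is indeed a focal hyperbolic structure, and then show that these are the only ones by reducing to the already-understood case of second partial Houghton groups via Theorem~\ref{thm:SecondPartialHoughton}. For the forward direction, I would fix $i$ and use the decomposition $H_n = \mathrm{ker}(\lambda_i) \rtimes \langle t_{i,j} \rangle$ for some $j \neq i$ (this works because $\lambda_i$ is surjective with $t_{i,j}$ mapping to a generator). The subgroup $\mathrm{ker}(\lambda_i)$ is locally-finite-by-$\mathbb{Z}^{n-2}$ rather than locally finite, so Corollary~\ref{cor:FocalDes} does not apply directly; instead I would invoke Proposition~\ref{prop:ConfiningToHyp}, checking that $t_{i,j}$ strictly confines $\mathrm{ker}(\lambda_i)$ into some explicit subset $Q$ built from $\mathrm{Fix}(R_i)$ together with the remaining generators $t_{r,s}$ with $r,s \neq i$ and their conjugates, using Lemma~\ref{lem:IncreaseingSubset} to enlarge $Q$ appropriately so that $[Q \cup \{t_{i,j}\}] = [\mathrm{Fix}(R_i) \cup T]$. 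The confinement itself is geometrically transparent: $t_{i,j}$ pushes supports of elements of $\mathrm{ker}(\lambda_i)$ away from the end of $R_i$, and any element of $\mathrm{ker}(\lambda_i)$ is eventually swallowed.

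For the harder direction — that there are no other focal structures — the key idea, announced in the introduction, is that with respect to any focal hyperbolic structure $\mathfrak{X}$ on $H_n$ there exists a two-element subset $I = \{i,j\} \subset \{1,\dots,n\}$ such that the second partial Houghton subgroup $H_n(I)$ is cobounded in $(H_n, \mathfrak{X})$. Granting this, Theorem~\ref{thm:SecondPartialHoughton} tells us that the induced focal structure on $H_n(I)$ is one of $[\mathrm{Fix}(R_i) \cup \{t_{i,j}\}]$ or $[\mathrm{Fix}(R_j) \cup \{t_{i,j}^{-1}\}]$; combined with the fact that $\mathfrak{X}$ restricts to this structure and that $H_n(I)$ is cobounded, one recovers $\mathfrak{X} = [\mathrm{Fix}(R_i) \cup T]$ (resp.\ $[\mathrm{Fix}(R_j) \cup T]$) after checking that the remaining generators $t_{r,s}$ of $T$ act with bounded orbits — this last point should follow from the structure of the Busemann morphism $\beta_{\mathfrak{X}}$, since $\beta_{\mathfrak{X}}$ factors through $H^1(H_n,\mathbb{R}) = \bigoplus \lambda_k$ (abelianisation, Lemma~\ref{lem:abelianisation}) and on a focal structure the loxodromic elements are exactly those outside $\ker \beta_{\mathfrak{X}}$; one argues $\beta_{\mathfrak{X}}$ must be (proportional to) $\lambda_i$ and then $\ker \lambda_i \supseteq [H_n,H_n] = \mathfrak{S}_\infty$ together with all $t_{r,s}$, $r,s\neq i$, so these have bounded orbits by Corollary~\ref{cor:BoundedOrbits}.

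The main obstacle is establishing the coboundedness claim: why must some $H_n(\{i,j\})$ be cobounded for a focal structure? Here I expect the argument to run as follows. By Proposition~\ref{prop:ConfiningToHyp}, there is $h \in H_n$ of infinite order in the abelianisation with $[H_n,H_n] \rtimes \langle h \rangle = \mathfrak{S}_\infty \rtimes \langle h \rangle$ cobounded in $(H_n,\mathfrak{X})$ and $h$ strictly confining $\mathfrak{S}_\infty$ into some $Q \subset \mathfrak{S}_\infty$. The element $h$ has a well-defined translation-length vector $(\lambda_1(h),\dots,\lambda_n(h))$ summing to zero; let $i$ be an index with $\lambda_i(h) > 0$ (positive entry exists since the vector is nonzero and sums to zero) and $j$ an index with $\lambda_j(h) < 0$. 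I would then argue that $H_n(\{i,j\})$ is cobounded by showing $\mathfrak{S}_\infty \cdot \langle h\rangle \subseteq H_n(\{i,j\}) \cdot (\text{bounded set})$, using that iterating $h$ confines the finitely-supported part and that $h^N$ for suitable $N$ can be corrected by an element of $H_n(\{i,j\})$ modulo a finitely-supported permutation whose support lies, after enough iterations of the confinement, in a controlled region. Making this precise — in particular ensuring the "correction" stays bounded in $\mathfrak{X}$ — is the delicate part, and I would lean on Lemma~\ref{lem:BoundedOrbits} / Corollary~\ref{cor:BoundedOrbits} together with the confinement relation to package the estimates, treating the two cases according to whether the Busemann morphism is oriented along a single ray direction.
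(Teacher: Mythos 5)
Your skeleton matches the paper's (confinement for the forward direction; coboundedness of a second partial Houghton subgroup plus Theorem~\ref{thm:SecondPartialHoughton} for the converse), but two steps of your converse have genuine gaps. First, the coboundedness step. The paper does not extract coboundedness from the confining element $h$ of Proposition~\ref{prop:ConfiningToHyp} by a correction argument; it chooses the pair $\{i,j\}$ so that the Busemann morphism $\beta$ satisfies $\beta(t_{i,j})>0$ (possible because the $t_{r,s}$ generate the abelianisation) and then quotes Lemma~\ref{lemma:Cobounded}, which says precisely that $[H_n,H_n]\rtimes\langle t_{i,j}\rangle=H_n(\{i,j\})$ is cobounded. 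Your criterion --- any $i$ with $\lambda_i(h)>0$ and any $j$ with $\lambda_j(h)<0$ --- can select a non-cobounded subgroup: if $n=4$, the structure is $[\mathrm{Fix}(R_1)\cup T]$ (so $\beta$ is proportional to $\lambda_1$) and $h$ has $\lambda$-vector $(-1,-1,1,1)$, you may take $i=3$, $j=2$; but then $H_4(\{2,3\})\subset\ker\lambda_1=\ker\beta$, and since $\beta$ is bounded on bounded subsets of $(H_4,\mathfrak{X})$ and unbounded on $H_4$, no subgroup of $\ker\beta$ can be cobounded. Your sketched correction argument runs into the same wall: the correcting elements $\prod_{k\neq i,j} t_{k,\cdot}^{N\lambda_k(h)}$ have $\mathfrak{X}$-norm growing linearly in $N$ as soon as some $t_{k,m}$ is loxodromic, which you cannot exclude at that stage.

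Second, your justification that the remaining generators $t_{r,s}$, $r,s\neq i$, have bounded orbits is incorrect as stated: lying in $\ker\beta$ does not give bounded orbits for a focal action --- $\mathfrak{S}_\infty\subset\ker\beta$ is unbounded (otherwise Lemma~\ref{lem:CommSubEll} would force the action to be lineal) --- and Corollary~\ref{cor:BoundedOrbits} takes bounded orbits as a hypothesis, so it cannot be fed with membership in $\ker\lambda_i$. The paper proves boundedness of $T_i=\langle t_{r,s},\ r,s\neq i\rangle$ by contradiction: if some such $t_{r,s}$ were loxodromic, a second application of Lemma~\ref{lemma:Cobounded} and Theorem~\ref{thm:FocalTwo} would bound $\mathrm{Fix}(R_r)$, and together with the boundedness of $\mathrm{Fix}_\infty(R_i)$, Fact~\ref{fact:SymBounded} and Lemma~\ref{lem:BoundedOrbits} (using $n\geq 3$) this would bound all of $\mathfrak{S}_\infty$, contradicting focality via Lemma~\ref{lem:CommSubEll}. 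Moreover, even granting boundedness of $T_i$, passing from ``the induced structure on $H_n(\{i,j\})$ is $[\mathrm{Fix}_\infty(R_i)\cup\{t_{i,j}\}]$'' to $[X]=[\mathrm{Fix}(R_i)\cup T]$ still requires work, which you only gesture at: the paper constructs a $4$-Lipschitz coarse retraction $h\mapsto h\lambda(h)$ onto $H_n(\{i,j\})$ (Claim~\ref{claim:Lip}) and checks $\mathrm{Fix}(R_i)=\mathrm{Fix}_\infty(R_i)\cdot\langle t_{r,s},\ r,s\neq i\rangle$ (Claim~\ref{claim:FinalClaim}). Your forward direction is sound in substance but more complicated than necessary: the fixator of $R_i$ in $H_n$ is already a subgroup containing every $t_{r,s}$ with $r,s\neq i$, and $t_{i,j}$ strictly confines $\ker(\lambda_i)$ into it directly, so no enlargement via Lemma~\ref{lem:IncreaseingSubset} is needed; you should also record, as the paper does, that the $n$ structures are pairwise non-comparable, since the statement claims there are exactly $n$ of them.
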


\noindent
First of all, we verify that the subsets given by our proposition are indeed focal hyperbolic structures.

\begin{lemma}\label{lem:TheyAreHypS}
For all $n \geq 3$ and $1 \leq i \leq n$, $\mathrm{Fix}(R_i) \cup T $ defines a focal hyperbolic structure on $H_n$. Moreover, these structures are pairwise non-comparable.
\end{lemma}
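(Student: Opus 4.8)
My plan is to realise each candidate structure as coming from a strictly confining automorphism on a suitable normal subgroup, and then to separate the structures by exhibiting explicit elements whose word-lengths behave differently.

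First I would set up the HNN-type picture. Fix $i$ and pick any $j \neq i$; let $t := t_{i,j}$ and let $\tau$ be the automorphism of $\mathfrak{S}_\infty$ it induces. By Claim~\ref{claim:FixCofining} (applied with the roles of the two rays as in Theorem~\ref{thm:FocalTwo}, after the identification of $R_i \cup R_j$ with $\mathbb{Z}$), $\tau$ is strictly confining $\mathfrak{S}_\infty$ into $\mathrm{Fix}(R_i)$. However, $H_n$ for $n \geq 3$ is not of the form $\mathfrak{S}_\infty \rtimes \mathbb{Z}$, so Proposition~\ref{prop:ConfiningToHyp} does not apply verbatim to $H_n$; instead I would consider the subgroup $\mathfrak{S}_\infty \rtimes \langle t \rangle = H_n(\{i,j\}) \leq H_n$, which by Theorem~\ref{thm:SecondPartialHoughton} carries the focal structure $[\mathrm{Fix}(R_i) \cup \{t\}]$. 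The point is then that $H_n$ is boundedly generated by $H_n(\{i,j\})$ together with the finitely many elements $t_{r,s}$ (indeed $\lambda$ has image of rank $n-1$ spanned by the $t_{r,s}$, and the kernel $\mathfrak{S}_\infty$ already sits inside $H_n(\{i,j\})$). So in the Cayley graph of $H_n$ relative to $\mathrm{Fix}(R_i) \cup T$, the subgroup $H_n(\{i,j\})$ is cobounded, and a short argument — of the kind that will be made systematically later in the paper — shows that an orbit map $H_n(\{i,j\}) \to \mathrm{Cayl}(H_n, \mathrm{Fix}(R_i) \cup T)$ is a quasi-isometry. Consequently the latter space is hyperbolic, the action is focal (it inherits a loxodromic element, namely $t$, and the Busemann structure of a focal action on $H_n(\{i,j\})$), and we get a focal hyperbolic structure on $H_n$. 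I would note here that $\mathrm{Fix}(R_i) \cup T$ generates $H_n$: the $t_{r,s}$ realise a rank-$(n-1)$ image under $\lambda$, and combined with finitely supported permutations (contained in $\mathrm{Fix}(R_i)$) this generates everything.

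Next, to see that the $n$ structures are pairwise non-comparable, I would produce, for each ordered pair $i \neq k$, a sequence of elements that is bounded in $\|\cdot\|_{\mathrm{Fix}(R_k) \cup T}$ but unbounded in $\|\cdot\|_{\mathrm{Fix}(R_i) \cup T}$. The natural candidates are transpositions of two consecutive vertices deep inside the ray $R_i$: such a transposition $\sigma$ lies in $\mathrm{Fix}(R_k)$ (so has length $1$ there as soon as it is nontrivial), while Fact~\ref{fact:Norm} — or rather its analogue with $T$ adjoined, which one gets because adjoining the finitely many $t_{r,s}$ only perturbs word-length by a bounded multiplicative-plus-additive amount on $\mathfrak{S}_\infty$, since each $t_{r,s}$ is $\ell$-bounded relative to $\{t_{i,j}\}$ up to finite error, or more cleanly because in $\mathrm{Cayl}(H_n,\mathrm{Fix}(R_i)\cup T)$ the subgroup $H_n(\{i,j\})$ is cobounded — shows that $\|\sigma\|_{\mathrm{Fix}(R_i) \cup T} \to \infty$ as the support recedes along $R_i$. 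This gives $\mathcal{F}_k \not\preceq \mathcal{F}_i$ for all $k \neq i$, and by symmetry in $i,k$ the structures are pairwise non-comparable.

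The main obstacle, I expect, is the bookkeeping needed to transport the ``confining into $\mathrm{Fix}(R_i)$'' computation from the second partial Houghton group $H_n(\{i,j\})$ to the whole group $H_n$: one must check that adjoining $T$ genuinely makes $H_n(\{i,j\})$ cobounded (not merely that it generates), and that coboundedness of a subgroup together with hyperbolicity of the ambient Cayley graph forces the orbit map to be a quasi-isometry so that focality is preserved. The word-length estimate of Fact~\ref{fact:Norm} will then have to be upgraded to account for the extra generators in $T$; I would handle this by observing that all of $T$ has bounded orbits in the structure coming from $H_n(\{i,j\})$'s lineal/focal action on each ray other than $R_i$, so adjoining $T$ changes nothing on the relevant $\mathfrak{S}_\infty$-part up to a bounded error, and the divergence of $\|\sigma\|$ along $R_i$ survives.
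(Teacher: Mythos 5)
The central step of your argument---transferring the focal structure from the cobounded subgroup $H:=H_n(\{i,j\})$, equipped with $Y=\mathrm{Fix}_\infty(R_i)\cup\{t_{i,j}\}$, to the Cayley graph of $H_n$ with respect to $X:=\mathrm{Fix}(R_i)\cup T$---has a genuine gap. Bounded generation of $H_n$ by $H$ and the finitely many $t_{r,s}$ only gives coarse density (coboundedness) of $H$ in $(H_n,d_X)$; it does not give the lower bound $d_Y\lesssim d_X$ on $H$, i.e.\ that $(H,d_Y)\hookrightarrow (H_n,d_X)$ is a quasi-isometric \emph{embedding}. Without that you can transport neither the hyperbolicity of $\mathrm{Cayl}(H,Y)$ to $\mathrm{Cayl}(H_n,X)$ nor the lower bound of Fact~\ref{fact:Norm} that your non-comparability argument needs. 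Your fallback---that ``coboundedness of a subgroup together with hyperbolicity of the ambient Cayley graph forces the orbit map to be a quasi-isometry''---is circular here (hyperbolicity of $\mathrm{Cayl}(H_n,X)$ is exactly what is being proved) and in any case only compares $d_X|_H$ with the ambient space, not with the prescribed word metric $d_Y$. Filling this hole is real work: it is precisely the $4$-Lipschitz retraction $h\mapsto h\lambda(h)$ of Claim~\ref{claim:Lip}, which the paper carries out later for the harder Proposition~\ref{prop:FocalGeneral}, not a quick remark. (Focality of the $H_n$-action would also need a word, e.g.\ that $H$ is normal in $H_n$, so $H_n$ fixes the unique point at infinity fixed by $H$.)

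The detour is moreover unnecessary and stems from a misconception: Proposition~\ref{prop:ConfiningToHyp} does not require the confined group to be $\mathfrak{S}_\infty$ or locally finite (only Corollary~\ref{cor:FocalDes} assumes local finiteness). The paper's proof applies it directly to the decomposition $H_n=\ker(\lambda_i)\rtimes\langle t_{i,j}\rangle$: one checks in three lines that $t_{i,j}$ strictly confines $\ker(\lambda_i)$ into the subgroup $\mathrm{Fix}(R_i)\leq H_n$ (the full pointwise fixator in $H_n$, which is a subgroup, so the condition on $Q^2$ is trivial), and $[\mathrm{Fix}(R_i)\cup\{t_{i,j}\}]=[\mathrm{Fix}(R_i)\cup T]$ since $T$ is finite. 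Non-comparability is then immediate from the Busemann morphism: $\beta(t_{i,j})\neq 0$ forces $\|t_{i,j}^m\|_{\mathrm{Fix}(R_i)\cup T}\to\infty$, while $t_{i,j}\in\mathrm{Fix}(R_k)$ for $k\neq i$ after choosing $j\neq i,k$ (possible as $n\geq 3$); your deep-transposition test elements would also work, but only after the quasi-isometry step above is secured, whereas the Busemann argument needs no upgraded Fact~\ref{fact:Norm} at all.
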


\begin{proof}
Given an index $1 \leq i \leq n$, fix an arbitrary $1 \leq j \leq n$ distinct from $i$. The morphism $\lambda_i : H_n \twoheadrightarrow \mathbb{Z}$ allows us to decompose $H_n$ as $\mathrm{ker}(\lambda_i) \rtimes \langle t_{i,j} \rangle$. Let us verify that $t_{i,j}$ is strictly confining $\mathrm{ker}(\lambda_i)$ in $\mathrm{Fix}(R_i)$. 

\medskip \noindent
First, notice that 
$$t_{i,j} \mathrm{Fix}(R_i) t_{i,j}^{-1} = \mathrm{Fix} ( \{i\} \times [1, + \infty)) \subsetneq \mathrm{Fix}(R_i)$$
and that $t_{i,j}^0 \mathrm{Fix}(R_i)^2 t_{i,j}^{-0} = \mathrm{Fix}(R_i)$. Finally, for every $g \in \mathrm{ker}(\lambda_i)$, we can fix a $p \geq 0$ such that $g$ fixes $\{i\} \times [p,+ \infty)$ pointwise and then we have $t_{i,j}^p g t_{i,j}^{-p} \in \mathrm{Fix}(R_i)$. Therefore, $\mathrm{ker}(\lambda_i) = \bigcup_{m \geq 0} t_{i,j}^{-m} \mathrm{Fix}(R_i) t_{i,j}^m = \mathrm{ker}(\lambda_i)$. 

\medskip \noindent
We deduce from Proposition~\ref{prop:ConfiningToHyp} that $[\mathrm{Fix}(R_i) \cup \{t_{i,j}\}]$, and a fortiori $[\mathrm{Fix}(R_i) \cup T]$, is a focal hyperbolic structure of $H_n$. Moreover, we also know from Proposition~\ref{prop:ConfiningToHyp} that the Busemann morphism takes a non-zero value on $t_{i,j}$, which implies that $\|t_{i,j}^m\|_{\mathrm{Fix}(R_i) \cup T} \to + \infty$ as $m \to + \infty$. 

\medskip \noindent
Now, if $1 \leq k \leq n$ is an index distinct from $i$, we need to justify that the two focal hyperbolic structures $[\mathrm{Fix}(R_i) \cup T]$ and $[\mathrm{Fix}(R_k) \cup T]$ are distinct. Because $n \geq 3$, and since $j$ was an arbitrary index distinct from $i$, we can assume that $j$ is also distinct from $k$. Therefore, $t_{i,j}$ belongs to $\mathrm{Fix}(R_k)$. Since $(t_{i,j}^m)_{m \geq 0}$ remains bounded in $(H_n, \mathrm{Fix}(R_k) \cup T)$ but diverges in $(H_n, \mathrm{Fix}(R_i) \cup T)$, we conclude that $[\mathrm{Fix}(R_i) \cup T] \preceq [\mathrm{Fix}(R_k) \cup T]$ does not hold. 

\medskip \noindent
Thus, we have proved that our focal hyperbolic structures are pairwise non-comparable.
\end{proof}

\noindent
The key observation, in order to prove Proposition~\ref{prop:FocalGeneral}, is that second partial Houghton subgroups are necessarily cobounded with respect to a focal hyperbolic structure, which will allow us to apply Theorem~\ref{thm:SecondPartialHoughton}. This observation is a consequence of the following result, which can be found in \cite[Proposition~4.5]{MR3420526}.

\begin{lemma}\label{lemma:Cobounded}
Let $G$ be a group and $[X]$ a regular focal hyperbolic structure. Let $\beta : G \to \mathbb{Z}$ denote the corresponding Busemann morphism. For every $g \in G$ satisfying $\beta(g) \neq 0$, the subgroup $\langle [G,G], g \rangle = [G,G] \rtimes \langle g \rangle$ is cobounded in $(G,d_X)$.
\end{lemma}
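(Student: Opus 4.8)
The plan is to cite \cite[Proposition~4.5]{MR3420526} essentially verbatim, but first reduce the statement of the lemma to exactly the form treated there. Recall that Proposition~\ref{prop:ConfiningToHyp} already gives us, starting from the regular focal structure $[X]$, an element $h \in G$ of infinite order in $G/[G,G]$, a subset $Q \subset [G,G]$ with $h$ strictly confining $[G,G]$ into $Q$, and the crucial conclusion that $[G,G] \rtimes \langle h \rangle$ is cobounded in $(G,\mathfrak{X})$ with $\mathfrak{X}=[X]$. So the case $g=h$ is already done. The remaining work is to pass from the distinguished element $h$ to an arbitrary $g$ with $\beta(g) \neq 0$.

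The first step is to record that the Busemann morphism $\beta$ vanishes on $[G,G]$ (since $\mathbb{Z}$ is abelian, any morphism $G \to \mathbb{Z}$ factors through the abelianisation), so $\beta$ descends to $\bar\beta : G/[G,G] \to \mathbb{Z}$, and $\bar\beta(\bar h) \neq 0$ because $h$ is loxodromic (a Busemann morphism is non-trivial exactly on loxodromic elements, as recorded just after the definition of the Busemann quasimorphism). Then for any $g$ with $\beta(g) \neq 0$, the elements $\bar g$ and $\bar h$ both have infinite order in $G/[G,G]$ and satisfy $\bar\beta(\bar g^{\beta(h)}) = \bar\beta(\bar h^{\beta(g)})$, i.e.\ $g^{\beta(h)}$ and $h^{\beta(g)}$ differ by an element of $[G,G]$. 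The second step is the coboundedness comparison: since $[G,G] \rtimes \langle h\rangle$ is cobounded in $(G,\mathfrak{X})$, it suffices to show $[G,G] \rtimes \langle h \rangle$ and $[G,G] \rtimes \langle g \rangle$ differ by a bounded set in $(G,d_X)$. Using $h^{\beta(g)} \in [G,G] \cdot g^{\beta(h)}$, one gets $\langle [G,G], h\rangle \subset \langle [G,G], g\rangle \cdot \{1, h, \ldots, h^{\beta(g)-1}\}$ (and symmetrically), and a finite set has bounded diameter in $(G,d_X)$; so the two subgroups are at finite Hausdorff distance, hence both cobounded. This is precisely the content of \cite[Proposition~4.5]{MR3420526}, so in the write-up I would simply invoke that reference after noting that $\beta(g)\neq 0$ forces $g$ to be loxodromic and that the Busemann morphism factors through $G/[G,G]$.

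The main obstacle — such as it is — is purely bookkeeping: making sure the "differ by a finite set, hence bounded in $d_X$" argument is phrased so as not to require that $[X]$ be given by a generating \emph{set} containing the relevant finite set (it need not), but only that any finite subset of $G$ has bounded orbit, which is automatic. Given that \cite[Proposition~4.5]{MR3420526} is exactly this statement, the cleanest proof is a one-line citation; I would include the two-sentence reduction above only for the reader's convenience.
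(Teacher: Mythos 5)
Your bottom line---prove the lemma by citing \cite[Proposition~4.5]{MR3420526}---is exactly what the paper does: it gives no argument for this lemma beyond that citation, so as a written proof your one-liner matches the paper. The trouble is with the ``reduction'' you present as being the content of that proposition, which contains a genuine error and would fail precisely in the situations where the lemma is applied. From $\bar\beta(\bar g^{\beta(h)})=\bar\beta(\bar h^{\beta(g)})$ you conclude that $g^{\beta(h)}$ and $h^{\beta(g)}$ differ by an element of $[G,G]$. That deduction needs $\bar\beta$ to be injective on $G/[G,G]$ modulo torsion, i.e.\ the abelianisation to have rank one. In the paper the lemma is invoked for $H_n$ with $n\geq 3$, whose abelianisation is $\mathbb{Z}^{n-1}$: take the focal structure $[\mathrm{Fix}(R_1)\cup T]$, $h=t_{1,2}$ and $g=t_{1,3}$. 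Since $\beta$ vanishes on the bounded element $t_{2,3}$ and $\bar t_{1,3}=\bar t_{1,2}+\bar t_{2,3}$ in the abelianisation, one has $\beta(g)=\beta(h)\neq 0$; yet no nonzero power of $h$ lies in $\langle [G,G],g\rangle=H_n(\{1,3\})$, because $\lambda_2$ vanishes on that subgroup but not on $h^c$ for $c \neq 0$. So the containment $\langle[G,G],h\rangle\subset\langle[G,G],g\rangle\cdot\{1,h,\ldots\}$ is false, and the two subgroups are not commensurated by any finite set.

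Note also that the two subgroups being at finite Hausdorff distance with respect to $d_X$ is \emph{equivalent} to the conclusion of the lemma (each is cobounded exactly when it is at finite Hausdorff distance from $G$), so it cannot serve as an intermediate step obtained by algebra alone; and \cite[Proposition~4.5]{MR3420526} is not the ``finite set'' statement you describe---its proof is genuinely geometric, exploiting the focal action (the loxodromic $g$ sweeps the direction of the fixed point at infinity while $[G,G]$ acts coboundedly transversally), not a commensuration of $\langle[G,G],h\rangle$ with $\langle[G,G],g\rangle$. The correct two-sentence reduction to the cited statement is much shorter than what you wrote: observe that $\beta$ factors through $G/[G,G]$ and is nonzero exactly on the loxodromic elements, so any $g$ with $\beta(g)\neq 0$ is loxodromic and the cited proposition applies verbatim to $\langle [G,G],g\rangle$. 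Keep the citation, drop the sketch.
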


\begin{proof}[Proof of Proposition~\ref{prop:FocalGeneral}.]
Let $[X]$ be a focal hyperbolic structure of $H_n$. Let $\beta : H_n \to \mathbb{Z}$ denote the corresponding Busemann morphism. Since the (images of) $t_{i,j}$ generate the abelianisation of $H_n$, necessarily there exist $i \neq j$ such that $\beta(t_{i,j}) \neq 0$. Up to replacing $t_{i,j}$ with its inverse $t_{j,i}$, we can assume without loss of generality that $\beta(t_{i,j})>0$. 

\medskip \noindent
As a consequence of Lemma~\ref{lemma:Cobounded}, the partial Houghton subgroup $H:= H_n(\{i,j\})$, which is generated by $\mathfrak{S}_\infty= [H_n,H_n]$ and $t_{i,j}$, is cobounded in $(H_n, d_X)$. Therefore, there exists a generating set $Y \subset H$ such that the inclusion map $H \hookrightarrow H_n$ induces a quasi-isometry $(H,d_Y) \to (H_n,d_X)$ (see for instance \cite[Lemma~3.11]{MR3995018}). Necessarily, $[Y]$ yields a focal hyperbolic structure for $H$. But we know from Theorem~\ref{thm:FocalTwo} and Proposition~\ref{prop:ConfiningToHyp} that $H$ admits a single focal hyperbolic structure for which $t_{i,j}$ has a positive image under the corresponding Busemann morphism, namely $[\mathrm{Fix}_\infty(R_i) \cup \{t_{i,j}\}]$, where we set $\mathrm{Fix}_\infty(R_i):= \mathrm{Fix}(R_i) \cap \mathfrak{S}_\infty$ for short. From now on, we will assume for convenience that $Y= \mathrm{Fix}_\infty(R_i) \cup \{t_{i,j}\}$.

\medskip \noindent
Setting $T_i:= \langle t_{r,s}, \ r,s \neq i \rangle$, our goal is to show that $[X]=[Y \cup T_i]$. We start by proving two claims.

\begin{claim}\label{claim:Tbounded}
The subgroup $T_i$ is bounded in $(H_n,d_X)$.
\end{claim}

\noindent
It amounts to showing that $T_i$ has bounded orbits with respect to its action on the Cayley graph $\mathrm{Cayl}(H_n,X)$. Notice that we already know that $[T_i,T_i] \subset \mathrm{Fix}(R_i) \subset X$ has bounded orbits. According to Corollary~\ref{cor:BoundedOrbits}, it suffices to show that each $t_{r,s}$, for $r,s \neq i$, has bounded orbits as well. Assuming to the contrary that there exists some $r,s \neq i$ such that $t_{r,s}$ is loxodromic, then we can deduce as above that $\mathrm{Fix}(R_r)$ or $\mathrm{Fix}(R_s)$ has bounded orbits. More precisely, assuming that $\beta(t_{r,s})>0$ up to replacing $t_{r,s}$ with $t_{s,r}$, it follows from Lemma~\ref{lemma:Cobounded} that the partial Houghton subgroup $K:=H_n(\{r,s\})$ is cobounded in $(H_n,d_X)$. Therefore, there exists a generating set $Z \subset K$ such that inclusion $K \hookrightarrow H_n$ induces a quasi-isometry $(K,d_Z) \to (H_n,d_X)$. Necessarily, $[Z]$ yields a focal hyperbolic structure for $K$. But we know from Theorem~\ref{thm:FocalTwo} and Proposition~\ref{prop:ConfiningToHyp} that $K$ admits a single focal hyperbolic structure for which $t_{r,s}$ has a positive image under $\beta$, namely $[\mathrm{Fix}(R_r) \cup \{t_{r,s}\}]$. Therefore, $\mathrm{Fix}(R_r)$ must be bounded in $(K,d_Z)$, and a fortiori in $(H_n,d_X)$. Thus, we know that $\mathrm{Fix}(R_i)$ and $\mathrm{Fix}(R_r)$ both have bounded orbits in $\mathrm{Cayl}(H_n,X)$. Because $n \geq 3$, we deduce from Lemma~\ref{lem:BoundedOrbits} and Fact~\ref{fact:SymBounded} that $\mathfrak{S}_\infty$ entirely has bounded orbits. According to Lemma~\ref{lem:CommSubEll}, this contradicts the fact that $[X]$ is a focal hyperbolic structure. This concludes the proof of Claim~\ref{claim:Tbounded}. 

\begin{claim}\label{claim:Lip}
For all $h_1,h_2 \in H$,
$$d_{Y \cup T_i}(h_1,h_2) \leq d_Y(h_1,h_2) \leq 4d_{Y \cup T_i}(h_1,h_2) .$$
\end{claim}

\noindent
The inequality $d_{Y\cup T_i}(h_1,h_2) \leq d_Y(h_1,h_2)$ is clear, so we focus on the second inequality. Recall that, for every $1 \leq k \leq n$, $\lambda_k : H_n \twoheadrightarrow \mathbb{Z}$ denotes the morphism that gives the eventual algebraic translation length of an element along the ray $R_k$. For every $h \in H_n$, set 
$$\lambda(h):= \prod\limits_{k \neq i,j} t_{k,j}^{\lambda_k(h)}.$$
Notice that $h\lambda(h) \in H$ for every $h \in H_n$. Moreover, for all $h \in H_n$ and $s \in Y \cup T_i$, $d_Y(h\lambda(h),hs\lambda(hs)) \leq 4$. Indeed:
\begin{itemize}
	\item if $s \in \mathrm{Fix}_\infty(R_i)$, then $\lambda(hs)=\lambda(h)$ and $$d_Y(h \lambda(h), hs\lambda(hs)) = \| \lambda(h)^{-1} s \lambda(h) \|_Y \leq 1$$ since $\lambda(h)^{-1}s \lambda(h)$ is a permutation that fixes $R_i$;
	\item if $s \in T_i$, then, noticing that $\lambda_k( \lambda(h)^{-1}s \lambda(hs)) = - \lambda_k(h) + \lambda_k(s) + \lambda_k(hs) = 0$ for every $k \neq i,j$, we deduce that $$d_Y(h \lambda(h), hs\lambda(hs)) = \| \lambda(h)^{-1} s \lambda(h) \|_Y \leq 1$$ since $\lambda(h)^{-1}s \lambda(hs)$ is a permutation that fixes $R_i$;
	\item if $s = t_{i,j}$, then notice that $\lambda(hs)=\lambda(h)$, and that $\lambda(h)^{-1}t_{i,j} \lambda(h) t_{i,j}^{-1}$ is a permutation that fixes $R_i$, which implies $\|\lambda(h)^{-1}t_{i,j} \lambda(h) t_{i,j}^{-1}\|_Y \leq 3$ according to Fact~\ref{fact:Norm}; from which we deduce that
$$d_Y(h \lambda(h), hs\lambda(hs)) = \| \lambda(h)^{-1} s \lambda(h) \|_Y \leq 4.$$
\end{itemize}
Therefore, the map $(H_n,d_{Y \cup T_i})  \to (H, d_Y)$ defined by $h \mapsto h \lambda(h)$ yields a $4$-Lipschitz retraction, providing the desired conclusion. Indeed, given a geodesic $a_0, \ldots, a_n$ in $(H_n,d_{Y \cup T_i})$ from $h_1$ to $h_2$, we have
$$d_Y(h_1,h_2) \leq \sum\limits_{i=0}^{n-1} d_Y ( a_i \lambda(a_i), a_{i+1}\lambda(a_{i+1})) \leq 4 n = 4 d_{Y \cup T_i}(h_1,h_2).$$
This concludes the proof of Claim~\ref{claim:Lip}. 

\medskip \noindent
Now, let $g_1,g_2 \in H_n$ be two arbitrary elements. Let $D$ denote the diameter of $T_i$ in $(H,d_X)$, which is finite according to Claim~\ref{claim:Tbounded}; and let $K>0$ be a constant such that the inclusion map $(H,d_Y) \to (H_n, d_X)$ yields a $(K,K)$-quasi-isometry. We have
$$\begin{array}{lcl} d_X(g_1,g_2) &  \leq & d_X(g_1\lambda(g_1), g_2 \lambda(g_2)) +2D \leq K d_Y(g_1 \lambda(g_1), g_2 \lambda(g_2)) +K+2D \\ \\ & \leq & 4K d_{Y \cup T_i} (g_1 \lambda(g_1), g_2 \lambda(g_2)) +K+2D \leq 4K d_{Y \cup T_i} (g_1,g_2) +K+2(D+1)\end{array}$$
and
$$\begin{array}{lcl} d_X(g_1,g_2)& \geq & d_X(g_1 \lambda(g_1),g_2 \lambda(g_2)) - 2D \geq \frac{1}{K} d_Y(g_1 \lambda(g_1), g_2 \lambda(g_2)) -K-2D \\ \\ & \geq & \frac{1}{K} d_{Y \cup T_i}(g_1 \lambda(g_1), g_2 \lambda(g_2)) -K-2D \geq \frac{1}{K} d_{Y \cup T_i}(g_1, g_2 ) -K-2(D+1) \end{array}$$
This concludes the proof of the fact that $[X]=[Y \cup T_i]$. By adding finitely many elements to our hyperbolic structure, we find that
$$[X] = [ \mathrm{Fix}_\infty(R_i) \cup \langle t_{r,s}, \ r,s \neq i \rangle \cup \{t_{i,r}, \ r \neq i \} ].$$
In order to justify that $[X]= [\mathrm{Fix}(R_i) \cup T]$, it suffices to notice that:

\begin{claim}\label{claim:FinalClaim}
The equality $\mathrm{Fix}(R_i)= \mathrm{Fix}_\infty(R_i) \cdot \langle t_{r,s}, \ r,s \neq i \rangle$ holds. 
\end{claim}

\noindent
Let $g \in \mathrm{Fix}(R_i)$ be an element. Because $n \geq 3$, for every $j \neq i$ we can find some $a_j \in \langle t_{r,s}, \ r,s \neq i \rangle$ such that $\lambda_j(g)= \lambda_j(a_j)$. Then, the product of the inverses of the $a_j$, in an arbitrary order, yields an element $a \in \langle t_{r,s}, \ r,s \neq i\rangle$ such that $\lambda_j(ga)=0$ for every $j \neq i$. Since $g$ and $a$ both fix $R_i$ pointwise, it follows that $ga$ is a permutation that belongs to $\mathrm{Fix}_\infty(R_i)$. This concludes the proof of Claim~\ref{claim:FinalClaim}.

\medskip \noindent
Thus, we have proved that all the focal hyperbolic structures of $H_n$ are of the form given by our proposition. Conversely, Lemma~\ref{lem:TheyAreHypS} allows us to conclude. 
\end{proof}

\subsection{Hyperbolic structures}

\noindent
Thanks to Proposition~\ref{prop:FocalGeneral}, we are now ready to prove Theorem~\ref{thm:HoughtonHigher}.

\begin{proof}[Proof of Theorem~\ref{thm:HoughtonHigher}.]
Focal hyperbolic structures of $H_n$ are described by Proposition~\ref{prop:FocalGeneral}. Recall from Lemma~\ref{lem:abelianisation} that
$$\lambda_1 \oplus \cdots \oplus \lambda_{n-1} : H_n \to \mathbb{Z}^{n-1}$$
is the abelianisation map. Consequently, $\{\lambda_1, \ldots, \lambda_{n-1}\}$ yields a basis of $H^1(H_n, \mathbb{R})$. Since $n \geq 3$, it follows from Lemma~\ref{lem:LinealActions} that $H_n$ has at least two oriented lineal hyperbolic structures. But then \cite[Theorem~4.22]{MR3995018} implies that $H_n$ are uncountably many oriented lineal hyperbolic structures, which are pairwise non-comparable. Recall from Lemma~\ref{lem:NonOrientedLineal} and Fact~\ref{fact:NotOntoDinfty} that every lineal hyperbolic structure of $H_n$ is oriented.

\medskip \noindent
Since, for every $1 \leq i \leq n$, it is clear that the focal hyperbolic structure $[\mathrm{Fix}(R_i) \cup T]$ dominates the lineal hyperbolic structure $[\mathrm{ker}(\lambda_i) \cup T]$, and since a focal hyperbolic structure cannot dominates two distinct lineal hyperbolic structures, this completes our description of the poset of hyperbolic structures of $H_n$. 
\end{proof}

\section{Groups with few focal actions}

\noindent
As already used several times in the previous sections, the group $\mathrm{QAut}(\mathcal{R}_n)$ contains a natural copy of the symmetric group $\mathrm{Sym}(n)$ that normalises $H_n$. Namely, $\mathrm{Sym}(n)$ permutes the $n$ rays of $\mathcal{R}_n$. This allows us to define the following family of virtual Houghton groups:

\begin{definition}
Given an integer $n \geq 1$ and a subgroup $G \leq \mathrm{Sym}(n)$, the corresponding \emph{Houghton group with permutations} is $H_n(G):= H_n \rtimes G$. 
\end{definition}

\noindent
Despite coinciding with Houghton groups up to finite index, Houghton groups with permutations provide interesting examples of posets of hyperbolic structures. Most notably, they provide examples of groups admitting a single focal hyperbolic structure. 

\begin{thm}\label{thm:HoughtonPerm}
Let $n \geq 2$ be an integer and $G \leq \mathrm{Sym}(n)$ a subgroup. The only possible focal hyperbolic structures of $H_n(G)$ are
$$[\mathrm{Fix}(R_i) \cup T \cup G] \text{ for } i \in \{1, \ldots, n\} \text{ fixed by } G,$$
where $T:= \{ t_{r,s}, \ 1 \leq r,s \leq n\}$. They are pairwise non-comparable. Each $\mathcal{F}_i$ dominates a single lineal hyperbolic structure, namely $[\mathrm{ker}(\lambda_i) \cup T \cup G]$.
\end{thm}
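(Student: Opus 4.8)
The plan is to reduce the analysis of focal structures of $H_n(G) = H_n \rtimes G$ to the already-established description of focal structures of $H_n$ (Proposition~\ref{prop:FocalGeneral}), exploiting that $H_n$ is a finite-index normal subgroup. First I would recall that, since $[H_n(G) : H_n] < \infty$, the inclusion $H_n \hookrightarrow H_n(G)$ is a quasi-isometry for any pair of compatible word metrics, so restricting a hyperbolic structure $[X]$ of $H_n(G)$ to $H_n$ gives a hyperbolic structure $[X|_{H_n}]$ of $H_n$ of the same type (this is the standard "finite-index subgroups see the same hyperbolic structures" principle — it is implicit in \cite{MR3995018} and also follows from the coboundedness/quasi-isometry lemmas used in the proof of Proposition~\ref{prop:FocalGeneral}). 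In particular, if $[X]$ is focal on $H_n(G)$, then $[X|_{H_n}]$ is focal on $H_n$, hence equals some $[\mathrm{Fix}(R_i) \cup T]$ by Proposition~\ref{prop:FocalGeneral}.

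The second step is to pin down which index $i$ can occur and to upgrade the restricted structure to $H_n(G)$. Let $\beta : H_n(G) \to \mathbb{R}$ be the Busemann (quasi)morphism of $[X]$; its restriction to $H_n$ is the Busemann morphism of $[X|_{H_n}]$, which by Proposition~\ref{prop:ConfiningToHyp}/\ref{prop:FocalGeneral} is a nonzero multiple of $\lambda_i$. Now $G$ conjugates $H_n$ and permutes the morphisms $\lambda_1, \ldots, \lambda_n$ according to its action on $\{1,\ldots,n\}$; since $\beta$ is defined on all of $H_n(G)$, the line $\mathbb{R}\lambda_i \subset H^1(H_n,\mathbb{R})$ must be $G$-invariant. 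For the structure to be focal (not lineal) we need $\beta|_{H_n}\neq 0$, and a short argument — $G$ is finite, so it fixes $\beta|_{H_n}$ up to sign, and up to replacing $i$ by $\sigma(i)$ it actually must fix $\lambda_i$ on the nose, forcing $i$ to be a fixed point of $G$ acting on $\{1,\ldots,n\}$ (if $i$ had a nontrivial orbit, no single $\lambda_i$ could be $G$-invariant as a character, only a sum, which is not of the right form). Once $i$ is a $G$-fixed point, $G$ normalises $H_n(\{i\}$-complement data$)$ and in particular normalises $\mathrm{Fix}(R_i)$; I would then check directly, as in Lemma~\ref{lem:TheyAreHypS}, that $[\mathrm{Fix}(R_i)\cup T\cup G]$ is a genuine focal structure (the extra generators from the finite set $G$ change nothing up to quasi-isometry), and that $[X] = [\mathrm{Fix}(R_i)\cup T\cup G]$ by the same coboundedness/retraction argument used for Proposition~\ref{prop:FocalGeneral}: $\langle [H_n(G),H_n(G)], t_{i,j}\rangle$ is cobounded by Lemma~\ref{lemma:Cobounded}, and this subgroup differs from $H_n(\{i,j\})$ only by the finite piece coming from $G$ and by the commutator subgroup, which is still $\mathfrak{S}_\infty$ (because $G$ acts on $H_n$ with $H_n/\mathfrak{S}_\infty$ acted on through $\mathrm{Sym}(n)$, and one checks $[H_n(G),H_n(G)] = \mathfrak{S}_\infty \rtimes [G,G]$-type object, bounded modulo $\mathfrak{S}_\infty$).

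Pairwise non-comparability of the $[\mathrm{Fix}(R_i)\cup T\cup G]$ for distinct $G$-fixed $i$ is inherited from the non-comparability in Proposition~\ref{prop:FocalGeneral}: if $i\neq k$ are both fixed by $G$, then (using $n\geq 3$, or noting that two distinct fixed points already force $n \geq$ at least $2$ fixed points so the argument of Lemma~\ref{lem:TheyAreHypS} with an auxiliary index goes through) $t_{i,j}$ is loxodromic for $\mathcal{F}_i$ but bounded for $\mathcal{F}_k$, and symmetrically. For the lineal part, I would use Lemma~\ref{lem:LinealActions}: $H^1(H_n(G),\mathbb{R}) = H^1(H_n,\mathbb{R})^G = \big(\bigoplus \mathbb{R}\lambda_i / (\sum\lambda_i)\big)^G$, and a character lies "below" the focal structure $\mathcal{F}_i$ exactly when it is proportional to $\lambda_i$, which is a $G$-fixed character precisely when $i$ is a $G$-fixed point; since a focal structure dominates a unique lineal one, $\mathcal{F}_i$ dominates $[\mathrm{ker}(\lambda_i)\cup T\cup G]$ and no other. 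The main obstacle I anticipate is the bookkeeping in step two: carefully controlling the commutator subgroup of $H_n(G)$ and confirming that the coboundedness argument of Proposition~\ref{prop:FocalGeneral} survives the presence of the finite factor $G$ (in particular that the "$T_i$ is bounded" claim, Claim~\ref{claim:Tbounded}, still holds — here one needs that $G$ normalises $\mathrm{Fix}(R_i)$ and hence contributes only bounded orbits), and the parity/sign subtlety in showing that $G$-invariance of $\mathbb{R}\lambda_i$ forces $i$ to be an actual fixed point rather than merely periodic.
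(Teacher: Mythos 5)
Your proposal is correct in outline but proceeds in the opposite direction from the paper. You go \emph{down}: restrict a focal structure of $H_n(G)$ to the finite-index subgroup $H_n$, identify it with some $\mathcal{F}_i$ via Proposition~\ref{prop:FocalGeneral}, and then use the Busemann character (which, $H_n(G)$ being amenable, is an honest homomorphism, so conjugation-invariance forces $\lambda_i=\lambda_{\sigma(i)}$ and hence $\sigma(i)=i$ for all $\sigma\in G$) to see that only $G$-fixed indices survive. The paper instead goes \emph{up}: it proves the general Proposition~\ref{prop:HypStructureFI}, a label-preserving poset isomorphism $\mathcal{H}^F(G)\cong\mathcal{H}(G\rtimes F)$ for any finite $F\leq\mathrm{Aut}(G)$, and then simply reads off from Theorem~\ref{thm:Intro} which structures of $H_n$ are $G$-invariant (namely $\mathcal{F}_i$ with $i$ a fixed point, since $G$ permutes the $\mathcal{F}_i$ according to its action on the rays), so the focal, lineal, comparability and domination statements all come for free at once. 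Your route buys a hands-on argument that needs no new general statement, at the cost of redoing the type/poset bookkeeping by hand; the paper's route buys uniformity and reusability (it also handles the lineal count and non-comparability without revisiting Busemann morphisms).

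One concrete repair is needed in your reconstruction step: the description of $[H_n(G),H_n(G)]$ as ``$\mathfrak{S}_\infty\rtimes[G,G]$-type, bounded modulo $\mathfrak{S}_\infty$'' is false when $G$ acts nontrivially, since mixed commutators such as $[\sigma,t_{r,s}]=t_{\sigma(r),\sigma(s)}t_{s,r}$ have infinite image in the abelianisation of $H_n$; so $\langle[H_n(G),H_n(G)],t_{i,j}\rangle$ is in general much bigger than $H_n(\{i,j\})\cdot(\text{finite})$. Fortunately this detour is unnecessary: once you know the restriction of $[X]$ to the cobounded (finite-index) subgroup $H_n$ is $[\mathrm{Fix}(R_i)\cup T]$, the equality $[X]=[\mathrm{Fix}(R_i)\cup T\cup G]$ follows directly, because each $x\in X$ factors as $hg$ with $g\in G$ and $h\in H_n$ of bounded $X$-length, and word lengths over equivalent generating sets of $H_n$ transfer to $H_n(G)$ after adjoining the finite set $G$; this is exactly the content of the paper's Lemma~\ref{lem:Finvariant} and Proposition~\ref{prop:HypStructureFI}, and it bypasses both Lemma~\ref{lemma:Cobounded} and any analysis of the commutator subgroup of $H_n(G)$. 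With that replacement, your argument goes through.
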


\noindent
Formally, Theorem~\ref{thm:HoughtonPerm} will be a direct consequence of the description of hyperbolic structures for Houghton groups as given by Theorem~\ref{thm:Intro}. In order to do this, we need to understand how hyperbolic structures behave under finite extensions. 

\medskip \noindent
Let $G$ be an arbitrary group. It is clear that 
$$\varphi \ast [X] = [\varphi(X)], \ \varphi \in \mathrm{Aut}(G), [X] \in \mathcal{H}(G)$$
defines an action of $\mathrm{Aut}(G)$ on $\mathcal{H}(G)$ that preserves both $\leq$ and the labels (i.e.\ whether a hyperbolic structure is ellptic, lineal, focal, or general type). Given a subgroup $F \leq \mathrm{Aut}(G)$, we say that a hyperbolic structure $\mathscr{X} \in \mathcal{H}(G)$ is \emph{$F$-invariant} if $\varphi \ast \mathscr{X}=\mathscr{X}$ for every $\varphi \in F$; and we denote by $\mathcal{H}^F(G)$ the set of $F$-invariant hyperbolic structures of $G$. Then:

\begin{prop}\label{prop:HypStructureFI}
Let $G$ be a group and $F \leq \mathrm{Aut}(G)$ a finite subgroup. The map
$$\Psi : \left\{ \begin{array}{ccc} \mathcal{H}^F(G) & \to & \mathcal{H}(G \rtimes F) \\ \left[ X \right] & \mapsto & \left[ X \cup F \right] \end{array} \right.$$
induces a label-preserving poset-isomorphism. 
\end{prop}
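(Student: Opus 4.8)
The plan is to build the inverse of $\Psi$ explicitly and check that both composites are the identity, with the label-preservation following from standard equivariance facts. First I would check that $\Psi$ is well-defined: if $[X]=[X']$ in $\mathcal{H}^F(G)$, then since $F$ is finite and acts on $G$ by automorphisms, $X\sim X'$ inside $G$ easily upgrades to $X\cup F\sim X'\cup F$ inside $G\rtimes F$ (each element of $F$ has word-length $1$ on both sides, and a bounded word on $X$ translates to a bounded word on $X'$, absorbing the finitely many $F$-syllables harmlessly); one also checks $\mathrm{Cayl}(G\rtimes F,X\cup F)$ is hyperbolic because $G$ is a finite-index subgroup on which the induced metric is quasi-isometric to $\mathrm{Cayl}(G,X)$ (use \cite[Lemma~3.11]{MR3995018} as invoked elsewhere in the paper), so hyperbolicity is inherited. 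Monotonicity of $\Psi$ for $\preceq$ is immediate from the definitions.

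Next I would construct the candidate inverse $\Phi:\mathcal{H}(G\rtimes F)\to\mathcal{H}^F(G)$. Given $[Y]\in\mathcal{H}(G\rtimes F)$, set $\Phi([Y]):=[(Y\cup F)\cap G]$ (equivalently, the restriction of the $G\rtimes F$-metric to $G$, which is cobounded and hyperbolic since $G$ is finite-index). The image is $F$-invariant: conjugation by any $f\in F$ is an isometry of $\mathrm{Cayl}(G\rtimes F,Y\cup F)$, hence induces on $G$ a quasi-isometry that is the identity up to bounded error, so $\varphi_f\ast\Phi([Y])=\Phi([Y])$. I would then verify $\Phi\circ\Psi=\mathrm{id}$: starting from $[X]\in\mathcal{H}^F(G)$, the set $(X\cup F)\cap G$ contains $X$ and is contained in the product set $X\cup FXF$, which (as $F$ is finite and $F$-conjugates of $X$ are equivalent to $X$ by hypothesis) is bounded relative to $\|\cdot\|_X$; so the restriction is equivalent to $X$. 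Conversely $\Psi\circ\Phi=\mathrm{id}$: given $[Y]$ on $G\rtimes F$, the set $((Y\cup F)\cap G)\cup F$ is sandwiched between $F$ and $Y\cup F\cup F\cdot((Y\cup F)\cap G)\cdot F$; since $G$ is finite-index, $Y\cup F$ is equivalent to any generating set of the form $Z\cup F$ with $Z\subset G$ generating $G$ up to the finite factor, and a short computation shows $((Y\cup F)\cap G)\cup F\sim Y\cup F$. This uses repeatedly that the inclusion $G\hookrightarrow G\rtimes F$ is a quasi-isometry for the relevant word metrics.

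Finally I would establish label-preservation. The point is that $\Psi$ and $\Phi$ are realised by $G$-equivariant (resp.\ $(G\rtimes F)$-equivariant) quasi-isometries between the corresponding Cayley graphs, using the standard fact that a finite-index subgroup acting on a hyperbolic space has the same action type (bounded/lineal/focal/general type) as the ambient group: a loxodromic for $G$ is loxodromic for $G\rtimes F$ and vice versa, endpoints at infinity are preserved, and fixed points at infinity of the big group are fixed setwise, hence — since $F$ is finite — cannot be swapped without $G$ already having index-two-like behaviour, so horocyclic/lineal/focal classifications transfer. Combined with the $F$-invariance constraint on the source, one reads off that $\Psi$ restricts to a bijection on each label class; $\preceq$ is preserved in both directions since $\Phi$ is also order-preserving by construction.

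The main obstacle I expect is the bookkeeping in $\Psi\circ\Phi=\mathrm{id}$ and in the well-definedness of $\Phi$: one must be careful that ``restrict the metric to $G$, then re-adjoin $F$'' genuinely recovers the original equivalence class rather than a coarser one, which hinges on the uniform comparison between $\|\cdot\|_{Y\cup F}$ on $G\rtimes F$ and $\|\cdot\|_{(Y\cup F)\cap G}$ on $G$ — precisely the finite-index quasi-isometry estimate, which must be applied with explicit (though routine) constants depending only on $|F|$. The label-transfer for \emph{non-oriented} lineal structures deserves a sentence of care, but since $F$ is finite it cannot create a new $\mathbb{D}_\infty$-quotient that $G$ did not already virtually have, so the dichotomy oriented/non-oriented is also respected.
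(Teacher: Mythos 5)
Your map $\Psi$ and the monotonicity/label discussion are broadly fine (the paper gets the same conclusions more cleanly by choosing, via its Lemma~\ref{lem:Finvariant}, a representative $X$ with $\varphi(X)=X$ for all $\varphi\in F$, so that $\mathrm{Cayl}(G\rtimes F,X\cup F)=\mathrm{Cayl}(G,X)\times K(F)$; note also that your hyperbolicity claim for $X\cup F$ silently uses $F$-invariance, since pushing $F$-letters past $X$-letters replaces them by their images under automorphisms in $F$). The genuine gap is in your inverse map. The formula $\Phi([Y]):=[(Y\cup F)\cap G]$ is not well-defined: $(Y\cup F)\cap G$ need not generate $G$, and it is \emph{not} equivalent to ``restrict the $G\rtimes F$-metric to $G$''. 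Concretely, take $G=\mathbb{Z}=\langle g\rangle$, $F=\mathrm{Aut}(\mathbb{Z})\cong\mathbb{Z}/2=\langle f\rangle$, so $G\rtimes F\cong\mathbb{D}_\infty$, and $Y=\{gf,\,f\}$; this generates $\mathbb{D}_\infty$ and gives a (lineal) hyperbolic structure, but $(Y\cup F)\cap G=\{1\}$, which generates nothing. Consequently your verification of $\Psi\circ\Phi=\mathrm{id}$ (the ``sandwich'' between $F$ and $Y\cup F\cup F\cdot((Y\cup F)\cap G)\cdot F$) collapses in this example, and this is precisely the surjectivity of $\Psi$, i.e.\ the substantive half of the statement. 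Your check of $\Phi\circ\Psi=\mathrm{id}$ only works because on the image of $\Psi$ one has $(X\cup F)\cap G=X\cup\{1\}$, so it gives injectivity but says nothing about surjectivity.

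The fix is the paper's projection-and-symmetrization argument, which is what your parenthetical ``restriction of the metric'' would need to be made precise: given $[X_0]\in\mathcal{H}(G\rtimes F)$, write each generator as $g_if_i$ with $g_i\in G$, $f_i\in F$, and set $X_1:=\{g_i\}$; since $F$ is finite, $[X_0]=[X_0\cup F]=[X_1\cup F]$, and conjugating by the length-one elements of $F$ gives $[X_1\cup F]=\bigl[\bigcup_{\varphi\in F}\varphi(X_1)\cup F\bigr]$. The set $X_2:=\bigcup_{\varphi\in F}\varphi(X_1)$ is $F$-invariant, so $[X_2]\in\mathcal{H}^F(G)$ and $\Psi([X_2])=[X_0]$. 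In your example this produces $X_2=\{g,g^{-1}\}$, as it should. Your label-preservation argument via finite-index subgroups is workable, but it too becomes immediate once one has the product decomposition $\mathrm{Cayl}(G\rtimes F,X\cup F)=\mathrm{Cayl}(G,X)\times K(F)$ available.
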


\noindent
Before proving Proposition~\ref{prop:HypStructureFI}, let us state a few equivalent characterisations of invariant hyperbolic structures.

\begin{lemma}\label{lem:Finvariant}
Let $G$ be a group and $F \leq \mathrm{Aut}(G)$ a finite subgroup. Given a hyperbolic structure $\mathscr{X} \in \mathcal{H}(G)$, the following assertions are equivalent:
\begin{itemize}
	\item[(i)] $\mathscr{X}$ is $F$-invariant, i.e.\ $\varphi \ast \mathscr{X}=\mathscr{X}$ for every $\varphi \in F$;
	\item[(ii)] for all $X \in \mathscr{X}$ and $\varphi \in F$, $[X]=[\varphi(X)]$;
	\item[(iii)] for every $X \in \mathscr{X}$, $[X]= \left[ \bigcup_{\varphi \in F} \varphi(X) \right]$;
	\item[(iv)] there exists $X \in \mathscr{X}$ such that $\varphi(X)=X$ for every $\varphi \in F$.
\end{itemize}
\end{lemma}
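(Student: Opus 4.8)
The statement is a chain of equivalences $(i)\Leftrightarrow(ii)\Leftrightarrow(iii)\Leftrightarrow(iv)$, so the plan is to prove $(i)\Rightarrow(ii)\Rightarrow(iii)\Rightarrow(iv)\Rightarrow(i)$, with $(iv)\Rightarrow(iii)$ being essentially trivial. The equivalence $(i)\Leftrightarrow(ii)$ is just unwinding definitions: $\varphi\ast[X]=[\varphi(X)]$, so $\varphi\ast\mathscr{X}=\mathscr{X}$ for all $\varphi\in F$ says exactly that $[\varphi(X)]=[X]$ for all $X\in\mathscr{X}$ and $\varphi\in F$. The real content is in $(ii)\Rightarrow(iii)$ and $(iv)\Rightarrow(i)$ — which amounts to showing that finitely many $\preceq$-equivalent generating sets have a common ``model'' that is genuinely $F$-fixed as a set, not merely $\preceq$-equivalent to its translates.

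\textbf{From $(ii)$ to $(iii)$.} Assume $[X]=[\varphi(X)]$ for every $\varphi\in F$. Since $X\subset\bigcup_{\varphi\in F}\varphi(X)$, we get $[X]\preceq[\bigcup_{\varphi}\varphi(X)]$ for free. For the reverse, note that each $\varphi(X)$ satisfies $\sup_{y\in\varphi(X)}\|y\|_X<\infty$ because $[\varphi(X)]\preceq[X]$; since $F$ is finite, taking the maximum of these finitely many suprema gives a uniform bound $\sup_{y\in\bigcup_\varphi\varphi(X)}\|y\|_X<\infty$, i.e.\ $[\bigcup_\varphi\varphi(X)]\preceq[X]$. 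Hence $[X]=[\bigcup_{\varphi\in F}\varphi(X)]$. One must also remark that $\bigcup_{\varphi\in F}\varphi(X)$ is still a generating set (it contains $X$) and that its Cayley graph is still hyperbolic (it is $\preceq$-equivalent to $X$, hence quasi-isometric to $\mathrm{Cayl}(G,X)$), so it indeed represents an element of $\mathcal{H}(G)$.

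\textbf{From $(iii)$ to $(iv)$ and closing the loop.} Given $(iii)$, take any $X\in\mathscr{X}$ and set $X':=\bigcup_{\varphi\in F}\varphi(X)$. Then $X'$ is manifestly $F$-invariant as a set — for $\psi\in F$, $\psi(X')=\bigcup_{\varphi\in F}\psi\varphi(X)=\bigcup_{\varphi\in F}\varphi(X)=X'$ since $\psi$ permutes $F$ — and $[X']=[X]=\mathscr{X}$ by hypothesis, which gives $(iv)$. For $(iv)\Rightarrow(i)$: if $X\in\mathscr{X}$ has $\varphi(X)=X$ for all $\varphi\in F$, then $\varphi\ast\mathscr{X}=\varphi\ast[X]=[\varphi(X)]=[X]=\mathscr{X}$, which is $(i)$. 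Finally $(iv)\Rightarrow(iii)$ is immediate since $\bigcup_{\varphi\in F}\varphi(X)=X$ for such an $X$. Threading these implications together closes the cycle. The only mild subtlety throughout — and the point I would be most careful to state explicitly — is that at every stage one checks the enlarged set is still an admissible representative of a hyperbolic structure (it generates $G$ and has hyperbolic Cayley graph), which follows because it is sandwiched between $X$ and a set $\preceq$-equivalent to $X$; the finiteness of $F$ is what makes the ``sup of finitely many sups'' argument go through, and it is used nowhere else.
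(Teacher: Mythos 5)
Your proof is correct and follows essentially the same route as the paper's: the cycle $(i)\Leftrightarrow(ii)$, $(ii)\Rightarrow(iii)$, $(iii)\Rightarrow(iv)$, $(iv)\Rightarrow(i)/(ii)$, with your ``finitely many sups'' argument being exactly what the paper packages as the small Fact that $[A]=[B]$ implies $[A]=[A\cup B]$. The only quibble is notational: with the paper's convention $X \preceq Y \Leftrightarrow \sup_{y\in Y}\|y\|_X<\infty$, the inclusion $X\subset\bigcup_{\varphi\in F}\varphi(X)$ gives the comparison in the direction opposite to the one you label as ``for free'' --- harmless here, since you establish both directions and only the resulting equality is used.
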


\begin{proof}
The equivalence $(i) \Leftrightarrow (ii)$ just follows from the definition of the action of $\mathrm{Aut}(G)$ on $\mathcal{H}(G)$. The implication $(ii) \Rightarrow (iii)$ is a consequence of the following straightforward observation:

\begin{fact}\label{fact:UnionHyp}
For all $[A],[B] \in \mathcal{H}(G)$, if $[A]=[B]$ then $[A]= [A \cup B]$.
\end{fact}

\noindent
In order to prove $(iii) \Rightarrow (iv)$, fix an arbitrary representative $X_0 \in \mathscr{X}$ and set $X:= \bigcup_{\varphi \in F} \varphi(X)$. Then, $X \in \mathscr{X}$ since $[X]=[X_0]$; and, by construction, $\varphi(X)=X$ for every $\varphi \in F$. Finally, in order to prove $(iv) \Rightarrow (ii)$, fix a representative $X \in \mathscr{X}$ satisfying $\varphi(X)=X$ for every $\varphi  \in F$, and notice that, for all $Y \in \mathscr{X}$ and $\varphi \in F$, we have $[\varphi(Y)] =[\varphi(X)] = [X] = [Y]$. 
\end{proof}

\begin{proof}[Proof of Proposition~\ref{prop:HypStructureFI}.]
First of all, notice that, for all $[X], [Y] \in \mathcal{H}^F(G)$, if $[X] \leq [Y]$ then $[X \cup F] \leq [Y \cup F]$. Moreover, if $[X] \in \mathcal{H}^F(G)$ with $\varphi(X)=X$ for every $\varphi \in F$, then $\mathrm{Cayl}(G \rtimes F, X \cup F)$ decomposes as the product $\mathrm{Cayl}(G,X) \times K(F)$, where $K(F)$ denotes the complete graph on $F$. This shows that $\Psi$ is a well-defined, labelled-preserving, poset-morphism. It also follows that $\Psi$ is injective. Indeed, if $[X]$ and $[Y]$ are two $F$-invariant hyperbolic structures satisfying $[X \cup F]= [Y \cup F]$, then, according to Lemma~\ref{lem:Finvariant}, we can assume that $\varphi(X)=X$ and $\varphi(Y)=Y$ for every $\varphi \in F$, so we deduce from the fact that 
$$\mathrm{Cayl}(G \rtimes F, X \cup F)= \mathrm{Cayl}(G, X) \times K(F)$$
and 
$$\mathrm{Cayl}(G \rtimes F, Y \cup F) = \mathrm{Cayl}(G,Y) \times K(F)$$
are quasi-isometric that $\mathrm{Cayl}(G, X)$ and $\mathrm{Cayl}(G,Y)$ are quasi-isometric, which amounts to saying that $[X]=[Y]$. 

\medskip \noindent
It remains to verify that $\Psi$ is surjective. So let $[X_0] \in \mathcal{H}(G \rtimes F)$ be an arbitrary hyperbolic structure. Because $F$ is finite, we know that $[X_0]=[X_0 \cup F]$. Fixing an enumeration $X_0= \{ g_if_i, \ i \in I\}$ where $g_i \in G$ and $f_i \in F$ for every $i \in I$, set $X_1:= \{ g_i, \ i \in I\}$. Then 
$$[X_0 \cup F]= [X_1 \cup F] = \left[ \bigcup\limits_{f \in F} fX_1f^{-1} \cup F \right] = \left[ \bigcup\limits_{ \varphi \in F} \varphi(X_1) \cup F \right].$$
Set $X_2:= \bigcup_{\varphi \in F} \varphi(X_1)$. Then $[X_2] \in \mathcal{H}^F(G)$ and $\Psi([X_2]) = [X_0]$.  
\end{proof}

\begin{proof}[Proof of Theorem~\ref{thm:HoughtonPerm}.]
Our theorem is an immediate consequence of Theorem~\ref{thm:Intro} and Proposition~\ref{prop:HypStructureFI}. 
\end{proof}

\begin{proof}[Proof of Theorem~\ref{thm:IntroSingle}.]
Let $n > k + 1$ be two non-negative integers. Let $F:= \mathrm{Fix}(\{1, \ldots, k\})$ be the pointwise fixator of $\{1, \ldots, k\}$ in $\mathrm{Sym}(n)$. Because $n>k+1$, the points $1, \ldots, k$ are the only points of $\{1, \ldots, n\}$ that are globally fixed by $F$. The desired conclusion then follows from Theorem~\ref{thm:HoughtonPerm}.
\end{proof}

\begin{remark}
For all integers $n>k+1$, Theorem~\ref{thm:IntroSingle} yields a group of type $F_n$ with exactly $k$ focal hyperbolic structures. It is worth mentioning that, following the same idea, there is a good candidate for an example of type $F_\infty$. For an arbitrary set $S$, possibly infinite, define the Houghton group $H(S)$ as the group of the quasi-automorphisms of the disjoint union $\mathcal{R}(S)$ of rays $S \times \mathbb{N}$ indexed by $S$ that restrict eventually to the identity on each ray with only finitely many exceptions where they restrict eventually to a translation. In other words, $H(S)$ is the direct limit of the Houghton groups $H(R)$ for $R \subset S$ finite. If a group $G$ acts on $S$, then it naturally acts on $\mathcal{R}(S)$ by permuting the rays, which induces an action of $G$ on $H(S)$ by automorphisms. Define the \emph{permutational Houghton group} $H(G\curvearrowright S)$ as $H(S) \rtimes G$. 

\medskip \noindent
Let $QV$ denote the group of quasi-automorphisms of the infinite binary tree $T_2$. Fixing a subset $O \subset T_2$ of size $k$, let $Q_OV$ denote the pointwise fixator of $O$ with respect to the action $QV \curvearrowright T_2$. Plausably, $H:= H(Q_OV \curvearrowright T_2)$ is a group of type $F_\infty$ with exactly $k$ focal hyperbolic structures. 

\medskip \noindent
The fact that $H$ has exactly $k$ focal hyperbolic structures should follow from the same arguments as in the proof of Theorem~\ref{thm:HoughtonPerm} combined with the fact that $Q_OV$, which is isomorphic to $QV$, satisfies Property (NL) \cite{NL}. And the fact that $H$ is of type $F_\infty$ should follow from the fact that the action $Q_OV \curvearrowright T_2$ is oligomorph with stabilisers of finite subsets of type $F_\infty$ (similarly to what happens for permutational wreath products \cite{MR3356831}). 
\end{remark}

\addcontentsline{toc}{section}{References}

\bibliographystyle{alpha}
{\footnotesize\bibliography{HypHoughton}}

\Address

%\addcontentsline{toc}{section}{Index}
%
%\printindex

\end{document}